\documentclass[a4paper,reqno,11pt]{amsart}
\usepackage{amssymb}
\usepackage{amstext}
\usepackage{amsmath}
\usepackage{amscd}
\usepackage{amsthm}
\usepackage{amsfonts}
\usepackage{enumerate}
\usepackage{graphicx}
\usepackage{color}
\usepackage{here} 
\usepackage{bm} 
\usepackage{mathrsfs}
\usepackage{mathtools}
\usepackage{latexsym}
\usepackage{booktabs}
\usepackage{fancyhdr}
\usepackage{subcaption}
\usepackage{tikz}
\usepackage{tikz-cd}
\usetikzlibrary{arrows}
\usetikzlibrary{decorations.markings}
\usetikzlibrary{patterns,decorations.pathreplacing}
\usetikzlibrary{calc}
\tikzstyle{vertex}=[circle ,draw, inner sep=0pt, minimum size=6pt]

\makeatletter
\pgfdeclarepatternformonly[\LineSpace]{my north west lines}{\pgfqpoint{-1pt}{-1pt}}{\pgfqpoint{\LineSpace}{\LineSpace}}{\pgfqpoint{\LineSpace}{\LineSpace}}
{
    \pgfsetcolor{\tikz@pattern@color} 
    \pgfsetlinewidth{0.6pt}
    \pgfpathmoveto{\pgfqpoint{0pt}{\LineSpace}}
    \pgfpathlineto{\pgfqpoint{\LineSpace + 0.1pt}{- 0.1pt}}
    \pgfusepath{stroke}
}
\makeatother 
\newdimen\LineSpace
\tikzset{
    line space/.code={\LineSpace=#1},
    line space=8.5pt
}

\usepackage{hyperref}
\hypersetup{
  colorlinks=true,
  citecolor=red,
  linkcolor=blue
}

\makeatletter
  
\@addtoreset{equation}{section}
\@namedef{subjclassname@2020}{\textup{2020} Mathematics Subject Classification}
\makeatother

\newcommand{\calB}{\mathcal{B}}
\newcommand{\calC}{\mathcal{C}}
\newcommand{\calD}{\mathcal{D}}
\newcommand{\calF}{\mathcal{F}}
\newcommand{\calG}{\mathcal{G}}
\newcommand{\calH}{\mathcal{H}}

\newcommand{\calN}{\mathcal{N}}

\newcommand{\calT}{\mathcal{T}}
\newcommand{\calW}{\mathcal{W}}

\newcommand{\calZ}{\mathcal{Z}}

\newcommand{\ZZ}{\mathbb{Z}}
\newcommand{\QQ}{\mathbb{Q}}
\newcommand{\RR}{\mathbb{R}}

\newcommand{\TT}{\mathbb{T}}
\newcommand{\kk}{\Bbbk}

\newcommand{\scrA}{\mathscr{A}}
\newcommand{\scrC}{\mathscr{C}}

\newcommand{\scrO}{\mathscr{O}}

\newcommand{\bfa}{\mathbf{a}}
\newcommand{\ab}{\mathbf{a}}
\newcommand{\bb}{\mathbf{b}}

\newcommand{\mb}{\mathbf{m}}

\newcommand{\tb}{\mathbf{t}}

\newcommand{\vb}{\mathbf{v}}
\newcommand{\wb}{\mathbf{w}}
\newcommand{\bfx}{\mathbf{x}}
\newcommand{\xb}{\mathbf{x}}
\newcommand{\yb}{\mathbf{y}}
\newcommand{\zb}{\mathbf{z}}

\newcommand{\fkB}{\mathfrak{B}}
\newcommand{\fkC}{\mathfrak{C}}

\newcommand{\fkI}{\mathfrak{I}}

\newcommand{\Hom}{\operatorname{Hom}}

\newcommand{\Gro}{Gr\"{o}bner }

\newcommand{\set}[1]{\left\{ #1 \right\}}
\newcommand{\rbra}[1]{\left( #1 \right)}
\newcommand{\ceil}[1]{\left \lceil #1 \right \rceil}

\def\opn#1#2{\def#1{\operatorname{#2}}} 
\opn\Cl{Cl} \opn\conv{conv} \opn\deg{deg} \opn\rank{rank} \opn\rk{rk} \opn\Spec{Spec} \opn\Stab{Stab} \opn\aff{aff} \opn\div{div} \opn\GL{GL}
\opn\cone{cone} \opn\End{End} \opn\Hom{Hom} \opn\mod{mod} \opn\gldim{gldim} \opn\pdim{pdim} \opn\diag{diag} \opn\vert{vert}
\opn\Block{Block} \opn\Pyr{Pyr} \opn\max{max} \opn\min{min} 
\opn\ini{in} \opn\out{out}
\opn\rev{rev} \opn\ker{Ker} \opn\im{Im} \opn\lat{lat} \opn\pull{pull} \opn\rev{rev} \opn\Sym{Sym} \opn\supp{supp} \opn\inte{int} \opn\star{star} \opn\sign{sign} \opn\sortl{sortl} \opn\sortr{sortr} \opn\Sortl{Sortl} \opn\Sortr{Sortr} \opn\PLC{PLC} \opn\ehr{ehr} \opn\pt{pt}

\newtheorem{thm}{Theorem}[section]
\newtheorem{cor}[thm]{Corollary}

\newtheorem{lem}[thm]{Lemma}
\newtheorem{prop}[thm]{Proposition}

\theoremstyle{definition}
\newtheorem{definition}[thm]{Definition}

\newtheorem{ex}[thm]{Example}

\theoremstyle{remark}

\newtheorem{rem}[thm]{Remark}

\begin{document}

\title{Toric rings associated with root systems and conic divisorial ideals via matroid theory}
\author{Koji Matsushita}
\address{Graduate School of Mathematical Sciences, The University of Tokyo, Komaba, Meguro-ku, Tokyo 153-8914, Japan}
\email{koji-matsushita@g.ecc.u-tokyo.ac.jp}

\subjclass[2020]{
Primary 13F65;
Secondary 13C20, 05B35, 05C22.
}
\keywords{Toric rings, Conic divisorial ideals, Matroids, Root systems, Signed posets}

\maketitle

\begin{abstract}
We study conic divisorial ideals from the viewpoint of matroid theory and apply the resulting framework to toric rings arising from root systems. 
For a toric ring, we describe the polytope representing divisor classes corresponding to conic divisorial ideals in terms of matroids.
We then turn to the toric ring $R_P$ associated with a certain subset $P$ of a classical root system, called a signed poset.
We compute the divisor class group and characterize the ($\mathbb{Q}$-)Gorenstein property of $R_P$ in terms of $P$.
Moreover, we also construct a polytope characterizing the conic divisorial ideals of $R_P$.
This recovers and extends previous results on Hibi rings to our toric rings.
\end{abstract}

\maketitle

\section{Introduction}
Throughout this paper, let $\kk$ be a field.
Let $C\subset \RR^d$ be a $d$-dimensional rational polyhedral cone.
We define the toric ring
\[
R:=\kk[C\cap\ZZ^d]
 =\kk[\tb^\mb:=t_1^{m_1}\cdots t_d^{m_d}\; : \; \mb:=(m_1,\ldots,m_d)\in C\cap\ZZ^d].
\]

Toric rings are objects of interest in various areas such as commutative algebra, combinatorics, algebraic geometry, and representation theory, and have been studied from many different viewpoints (see, for example, \cite{bruns2009polytopes,cox2024toric,herzog2018binomial}).
They enjoy several favorable ring-theoretic properties, and many of their invariants can be described explicitly.
For instance, toric rings are normal affine semigroup rings and hence Cohen--Macaulay by Hochster's theorem \cite{Hochster1972rings}.
Moreover, their canonical modules and divisor class groups admit concrete descriptions in terms of the underlying cones (see Section~\ref{sec:pre}).

For a positive integer $k$, let
\[
R^{1/k}:=\kk\bigl[C\cap \tfrac{1}{k}\ZZ^d\bigr],
\]
regarded as an $R$-module.
Then $R^{1/k}$ has a natural decomposition as an $R$-module:
\[
R^{1/k}
 =\bigoplus_{L\in \frac{1}{k}\ZZ^d/\ZZ^d} \kk\{\tb^\mb : \mb \in C\cap L\}.
\]
The modules appearing in this decomposition are called the conic modules, or conic divisorial ideals, of $R$.
Conic modules form a distinguished class of divisorial ideals, that is, rank 1 reflexive modules, on toric rings, and their applications have been extensively studied (see, e.g., \cite{bruns2005conic,bruns2003divisorial}).
It is known that the number of isomorphism classes of conic modules of $R$ is finite.
Moreover, since $R^{1/k}$ is a maximal Cohen--Macaulay (MCM, for short) $R$-module, every conic module is also an MCM $R$-module.

Conic divisorial ideals play important roles in non-commutative algebraic geometry and commutative algebra with positive characteristic.
For example, non-commutative (crepant) resolutions (NC(C)Rs, for short) of toric rings can be constructed by considering endomorphism rings of direct sums of conic divisorial ideals (see, e.g., \cite{broomhead2012dimer,faber2019non,higashitani2022conic,higashitani2019conic,ishii2016dimer,nakajima2019non,vspenko2017non}).
Moreover, conic divisorial ideals are used to analyze the structure of the Frobenius push-forward of $R$.
In particular, invariants such as the (generalized, dual) $F$-signatures and the Hilbert--Kunz multiplicity can be computed using information on conic divisorial ideals (see, e.g., \cite{bruns2005conic,higashitani2021generalized,matsushita2024dual}).

From the viewpoint of divisor class groups, conic modules admit a convenient description.
Indeed,  every toric ring $R$ can be realized as the degree ${\bf 0}$ component $S_{\bf 0}$ of a $\Cl(R)$-graded polynomial ring $S:=\kk[x_1,\ldots,x_n]$, where $\Cl(R)$ denotes the divisor class group of $R$, and that every conic module is isomorphic to $S_\ab$ for some $\ab\in \Cl(R)$.
Let $\beta_i:=\deg(x_i)$ for $i=1,\ldots,n$, and call it a weight of $R$.
Then the conic modules of $R$ are characterized by a polytope in $\Cl(R)\otimes_\ZZ \RR$ determined by the weights of $R$ (see Section~\ref{sec:pre}).

\medskip

In light of the above observations and current situation, it is important to determine the weights of $R$ and the polytope that characterizes its conic modules.
For several classes of toric rings arising from combinatorial objects, both the weights and the corresponding polytope have been explicitly determined:
\begin{itemize}
    \item Hibi rings (\cite{higashitani2019conic,matsushita2022conic}),
    \item edge rings of complete multipartite graphs (\cite{higashitani2022conic}),
    \item stable set rings of perfect graphs (\cite{matsushita2022conic}).
\end{itemize}
On the other hand, for general toric rings, the problem of determining these data is quite difficult, and, outside the classes listed above, it has not been studied in much detail.

In a previous paper \cite{matsushita2022conic}, the author suggested that the theory of oriented matroids provides an effective framework for studying the weights and the polytope characterizing conic modules of general toric rings. 
In this paper, we develop this idea in a more precise and systematic way.
As an application, we determine these data for a class of toric rings arising from root systems.

\medskip 

Let $\Phi\subset \RR^d$ be a root system.
A $\Phi$-poset $P$ is a subset of $\Phi$ satisfying certain conditions.
When $\Phi$ is a classical root system, we will also refer to such a $\Phi$-poset simply as a signed poset.
We then define the toric ring of $P$, denoted by $R_P$, to be the toric ring associated with the dual cone of the cone generated by $P$ (see Section~\ref{subsec:signed poset} for the precise definitions).

In \cite{alexander2014root}, this toric ring $R_P$ of a signed poset $P$ is called the \textit{weight semigroup ring} of $P$.
In this paper, however, we will simply refer to it as the toric ring of $P$.
Some of its properties, such as defining ideals, \Gro bases, and complete intersectionness, have already been studied (see \cite{adrien2012linear,alexander2014root,valentin2012ppartition}).
The toric ring $R_P$ was originally studied mainly from a combinatorial point of view.
From the viewpoint of commutative algebra, however, its divisor-theoretic properties seem to have received much less attention.

The toric ring $R_P$ may be regarded as a generalization of Hibi rings, which are toric rings arising from posets introduced by Hibi (\cite{hibi1987distributiv}).
For Hibi rings, various properties have been investigated, including divisor class groups (\cite{hashimoto1992divisor}), Gorensteinness (\cite{hibi1987distributiv}), and conic divisorial ideals (\cite{higashitani2019conic,matsushita2022conic,robinson2020big}).

In this paper, we extend these results on Hibi rings to toric rings of signed posets.

\medskip

We summarize our main results as follows:
\begin{itemize}
    \item We describe the facet-defining inequalities of the polytope characterizing conic divisorial ideals of a toric ring in terms of matroid theory (Theorem~\ref{thm:conicdesc}).
    Moreover, we provide a formula for the number of isomorphism classes of conic divisorial ideals of toric rings (Corollary~\ref{cor:numberconic}).
    \item We compute the divisor class groups and the weights of toric rings associated with $\Phi$-posets arising from classical root systems (Proposition~\ref{prop:CL(R_P)} and Theorem~\ref{thm:weightR_P}). Using this description of the weights, we characterize when these toric rings are ($\QQ$-)Gorenstein (Theorem~\ref{thm:Gorenstein}). In addition, we apply Theorem~\ref{thm:conicdesc} to this class of toric rings (Theorem~\ref{thm:weightconic}).
    \item We further specialize the above results to balanced signed posets.
    In particular, in this case we show that the isomorphism classes of conic divisorial ideals are in bijection with suitable acyclic orientations of the graph associated with the signed poset (Theorems~\ref{thm:weightAconic} and \ref{thm:characonicA}).
    These results may be regarded as generalizations of known facts for Hibi rings.
\end{itemize}

\medskip

This paper is organized as follows.
In Section~\ref{sec:pre}, we recall the definitions and notation concerning toric rings and their conic divisorial ideals.
In Section~\ref{sec:conictoric}, we review the necessary notation from matroid theory and state our main results for general toric rings. We study the polytope characterizing conic divisorial ideals and the number of their isomorphism classes.
In Section~\ref{sec:4}, we study toric rings arising from $\Phi$-posets of classical root systems. After recalling the necessary definitions and notation for these $\Phi$-posets and their signed graph descriptions, we compute their divisor class groups and weights. We also characterize when these rings are ($\QQ$-)Gorenstein and describe their conic divisorial ideals.
In Section~\ref{sec:balanced}, we focus on balanced signed posets. We explain the notion of balanced signed posets and establish a bijection between the isomorphism classes of conic divisorial ideals and certain acyclic orientations of an associated graph.

\subsection*{Notation and Conventions}
For a set $S$ and its subset $S \subseteq X$, let $\chi_S \in \RR^X$ denote its characteristic vector. In particular, for $s \in X$, we write $\chi_s$ for the characteristic vector of $\{s\}$, that is, the corresponding unit vector.
We naturally regard $\RR^d$ and $\ZZ^d$ as $\RR^{[d]}$ and $\ZZ^{[d]}$, respectively, where $[d]:=\{1,\ldots,d\}$. 
Unless an explicit representation is given, all vectors are understood to be column vectors.
For a finite subset $V$ of $\ZZ^d$ with $\#V=n$, 
we denote by $A_V$ the $d\times n$-matrix whose columns are the vectors in $V$.
We regard $A_V$ as the $\ZZ$-linear map from $\ZZ^n$ to $\ZZ^d$ given by matrix multiplication.
For a vector $\ab \in \RR^d$, we write $|\ab|$ for the sum of the components of $\ab$, that is,
$|\ab| := \sum_{i=1}^d \ab(i)$.
Moreover, we define the vectors $\ab^+$ and $\ab^-$ of $\RR^d$ as follows: 
\[
\ab^+(i):=\begin{cases}
    \ab(i) &\text{ if $\ab(i)\ge 0$}, \\
    0 &\text{ if $\ab(i)< 0$},
\end{cases} \qquad
\ab^-(i):=\begin{cases}
    0 &\text{ if $\ab(i)\ge 0$}, \\
    -\ab(i) &\text{ if $\ab(i)< 0$}.
\end{cases}
\]

\bigskip

\section{Preliminaries on toric rings}\label{sec:pre}

First, we recall toric rings and their conic divisorial ideals.

Let $V:=\{\vb_1,\ldots,\vb_n\}$ be a finite subset of $\ZZ^d$.
We consider the rational polyhedral cone 
$$
C(V):=\RR_{\ge 0}V=\set{\sum_{i=1}^na_i \vb_i : a_1,\ldots,a_n\in \RR_{\ge 0}} 
$$
generated by $\vb_1, \cdots, \vb_n$. 
We assume that this system of generators is minimal, the generators are primitive, i.e., $\epsilon \vb_i \notin \ZZ^d$ for any $0<\epsilon <1$, and $C(V)$ is strongly convex.
We also consider the dual cone $C^\vee(V)$ of $C(V)$: 
$$
C^\vee(V):=\{{\bf x}\in\RR^d : A^T_V\xb(i)\ge0 \text{ for all } i\in [n] \}. 
$$
We now define the \textit{toric ring} 
\begin{align*}
R=\kk[C^\vee(V)\cap\ZZ^d]=\kk[t_1^{m_1}\cdots t_d^{m_d} : (m_1, \ldots, m_d)\in C^\vee(V)\cap\ZZ^d]. 
\end{align*}
Note that $R$ is a $d$-dimensional Cohen-Macaulay normal domain. 
In addition, for each $\bfa\in\RR^n$, we set 
$$
T(\bfa)=\{{\bf x}\in\ZZ^d : A^T_V\xb(i)\ge \ab(i) \text{ for all } i\in [n] \}. 
$$
Then, we define the module $\calT(\bfa)$ generated by all monomials whose exponent vector is in $T(\bfa)$. 
By the definition, we have $T(0)=C^\vee(V)\cap\ZZ^d$ and $\calT(0)=R$.
Moreover, we note some facts associated with the module $\calT(\bfa)$ (see, e.g., \cite[Section 4.F]{bruns2009polytopes}): 
\begin{itemize}
\item Since $A^T_V\xb(i)\in \ZZ$ for any $i\in [n]$ and any $\bfx \in \ZZ^d$, we can see that $\calT(\bfa)=\calT(\lceil \bfa\rceil)$, where $\lceil \; \rceil$ means the round up 
and $\lceil \bfa\rceil=(\lceil \ab(1)\rceil, \cdots, \lceil \ab(n)\rceil)$. 
\item The module $\calT(\bfa)$ is a divisorial ideal and any divisorial ideal of $R$ takes this form. 
Therefore, we can identify each $\bfa\in\ZZ^n$ with the divisorial ideal $\calT(\bfa)$.
In particular, $\calT(\chi_{[n]})$ is isomorphic to the canonical module of $R$ (\cite{Stanley1978Hilbert}).
\item It is known that the isomorphic classes of divisorial ideals of $R$ one-to-one correspond to the elements of the divisor class group $\Cl(R)$ of $R$. 
We see that for $\bfa, \bfa^\prime\in\ZZ^n$, $\calT(\bfa)\cong \calT(\bfa^\prime)$ if and only if there exists ${\bf y}\in \ZZ^d$ such that $\ab=\ab^\prime+A^T_V\yb$. 
Thus, we have $\Cl(R)\cong\ZZ^n/\im(A^T_V)$.
In particular, we have 
\[
\Cl(R) \cong \ZZ^{n-r} \oplus \ZZ/s_1\ZZ \oplus \cdots \oplus \ZZ/s_r\ZZ,
\]
where $r:=\rank A_V $ and $s_1,\ldots,s_r$ are positive integers appearing in the diagonal of the Smith normal form of $A_V$.
\end{itemize}
The integers $s_1,\ldots,s_m$ are called the \textit{invariant factors} of $A_V$.
It is known that $s_i=g_i(A_V)/g_{i-1}(A_V)$ (see, e.g., \cite{newman1972integral}).
Here, for a matrix with integer entries $M$ and $i\in \ZZ_{\ge 0}$, $g_i(M)$ denotes the greatest common divisor of all $i\times i$ minors of $M$.
We let $g_0(M)=1$, and $g_i(M)=0$ if $i>\rank M$.
\medskip

We are interested in a divisorial ideal called \textit{conic}. 
\begin{definition}[{see, e.g., \cite[Section~3]{bruns2003divisorial}}]
\label{def_conic}
We say that a divisorial ideal $\calT(\bfa)$ is \emph{conic} if there exists $\bfx\in\RR^d$ such that $\bfa=\lceil A^T_V\xb\rceil$.
Let $\scrC(R)$ be the set of isomorphic classes of conic divisorial ideals of $R$. 
\end{definition}
Although the definition given here looks different from the one in the introduction, the two are equivalent (see \cite{bruns2005conic}).

We give a description of a region in $\Cl(R)$ corresponding to the classes of conic divisorial ideals of $R$.
The following argument already appears in \cite[Section~1]{bruns2003divisorial}.

Since $\Cl(R)\cong \ZZ^n/\im(A_V^T)$, there is an exact sequence of $\ZZ$-modules:
 \begin{equation}\label{cl_seq}
 0 \longrightarrow \im(A^T_V) \lhook\joinrel\longrightarrow \ZZ^n \xlongrightarrow{\pi}\Cl(R) \longrightarrow 0. 
\end{equation}
Let $\Cl(R)_\RR:=\Cl(R)\otimes_\ZZ \RR$ and $\pi_{\RR} : \RR^n \to \Cl(R)_\RR$ be the $\RR$-linear extension of $\pi$.
Let $\Cl(R)_{\mathrm{tor}}$ (resp. $\Cl(R)_{\mathrm{fre}}$) be the torsion subgroup (resp. the free subgroup) of $\Cl(R)$.
Note that $\Cl(R)_{\mathrm{fre}}$ is naturally regarded as a lattice in $\Cl(R)_\RR$.
In addition, for each $i=1,\ldots,n$, let $\beta_i:=\pi(\chi_i)$ and $\widetilde{\beta_i}:=\pi_\RR(\chi_i)\in \Cl(R)_{\mathrm{fre}}$.
We refer to the elements $\beta_1,\ldots,\beta_n$ as \textit{weights} of $R$. Note that these weights depend on the choice of $\pi$.

We set $\calB:=\{\widetilde{\beta_1},\ldots,\widetilde{\beta_n}\}$ and 
\[
\calW(\calB):=\pi_\RR([0,1)^n)=\left\{\sum_{i=1}^\ell \alpha_i\widetilde{\beta_i} : 0\le \alpha_i <1 \; \forall i\in [n]\right\}\subset \Cl(R)_\RR.
\]
For $\ab\in \ZZ^n$ such that there exists $\yb\in \RR^d$ with $\ab=\ceil{ A^T_V\yb}$, we have $\ab- A^T_V\yb=\ceil{ A^T_V\yb}- A^T_V\yb\in [0,1)^n$.
Therefore, the divisorial ideal $\calT(\ab)$ is conic if and only if $(\ab-\im(A^T_V))\cap [0,1)^n\neq \emptyset$, and hence we get the following proposition:
\begin{prop}[{cf. \cite[Corollary~1.5 (a)]{bruns2005conic}}]\label{prop:coniccorres}
    Under the above notation, there exists a bijection between $\scrC(R)$ and $(\calW(\calB)\cap \Cl(R)_{\mathrm{fre}})\times \Cl(R)_{\mathrm{tor}}$.
\end{prop}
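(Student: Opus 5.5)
The plan is to identify the conic classes in $\Cl(R)$ as the full preimage of $\calW(\calB)$ under the natural map $\Cl(R)\to\Cl(R)_\RR$, and then to split off the torsion.

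\textbf{Step 1 (realification).} Tensoring \eqref{cl_seq} with $\RR$ shows that $\pi_\RR:\RR^n\to\Cl(R)_\RR$ is surjective with kernel $\im(A_V^T)\otimes\RR=A_V^T\RR^d$. Let $\iota:\Cl(R)\to\Cl(R)_\RR$ be $c\mapsto c\otimes 1$, so that $\iota\circ\pi=\pi_\RR|_{\ZZ^n}$. The kernel of $\iota$ is exactly $\Cl(R)_{\mathrm{tor}}$. Its image is the lattice that the paper calls $\Cl(R)_{\mathrm{fre}}$.

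\textbf{Step 2 (characterizing conic classes).} I claim that for $\ab\in\ZZ^n$, the divisorial ideal $\calT(\ab)$ is conic (up to isomorphism) if and only if $\pi_\RR(\ab)\in\calW(\calB)$.

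For the forward direction, suppose $\calT(\ab)\cong\calT(\ab')$ with $\ab'=\ceil{A_V^T\yb}$ for some $\yb\in\RR^d$. Then $\pi_\RR(\ab)=\pi_\RR(\ab')$, since isomorphic ideals differ by an element of $\im(A^T_V)$. Moreover $\ab'-A_V^T\yb\in[0,1)^n$ and $A_V^T\yb\in\ker\pi_\RR$, so $\pi_\RR(\ab')=\pi_\RR(\ab'-A_V^T\yb)\in\calW(\calB)$.

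For the converse, suppose $\pi_\RR(\ab)=\pi_\RR(\ub)$ with $\ub\in[0,1)^n$. Then $\ab-\ub=A_V^T\yb$ for some real $\yb$. Since $\ab$ is integral and $\ub\in[0,1)^n$, we get $\ab=\ceil{A_V^T\yb}$, so $\calT(\ab)$ is itself conic.

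It follows that the set of conic classes is $\iota^{-1}\bigl(\calW(\calB)\bigr)\subseteq\Cl(R)$. In particular, if a class $c$ is conic, then so is $c+t$ for every $t\in\Cl(R)_{\mathrm{tor}}$.

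\textbf{Step 3 (splitting).} Choose a splitting $\Cl(R)=F\oplus\Cl(R)_{\mathrm{tor}}$ with $F$ free; this is possible since $\Cl(R)$ is finitely generated. Then $\iota|_F:F\to\Cl(R)_{\mathrm{fre}}$ is an isomorphism: it is injective because $F$ meets the torsion trivially, and surjective because $\iota$ kills the torsion. The map
\[
c=f+t\longmapsto(\iota(f),t)=(\iota(c),t)
\]
is therefore a bijection $\Cl(R)\to\Cl(R)_{\mathrm{fre}}\times\Cl(R)_{\mathrm{tor}}$. Under this bijection, $\iota^{-1}(\calW(\calB))$ corresponds exactly to $(\calW(\calB)\cap\Cl(R)_{\mathrm{fre}})\times\Cl(R)_{\mathrm{tor}}$. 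Combined with the identification of $\scrC(R)$ with the conic classes in $\Cl(R)$ from Step 2, this gives the required bijection.

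The only point needing care is Step 2: $\calW(\calB)$ is defined using real coefficients, whereas isomorphism of divisorial ideals is governed by the integral lattice $\im(A_V^T)$. The argument above resolves this by working with the real kernel of $\pi_\RR$, and by observing that the bijection depends on the choice of splitting only through the torsion coordinate.
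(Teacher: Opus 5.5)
Your proof is correct and follows the same route as the paper. The paper's argument is exactly your Step~2: $\calT(\ab)$ is conic iff $\ab-A_V^T\yb\in[0,1)^n$ for some real $\yb$, i.e.\ iff $\pi_\RR(\ab)\in\calW(\calB)$, after which it simply asserts the proposition; your Steps~1 and~3 spell out the realification and the splitting off of $\Cl(R)_{\mathrm{tor}}$ that the paper leaves implicit.
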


\bigskip

\section{Conic divisorial ideals via matroid theory}\label{sec:conictoric}
In this section, we introduce the necessary notation from matroid theory and establish our results on conic divisorial ideals of toric rings.

\subsection{Preliminaries on matroid theory}

This subsection is devoted to recalling matroid theory.
We refer the reader to e.g., \cite{Bjorner1999oriented,oxley2006matroid} for the introduction to (oriented) matroid theory.

Let $M$ be a matroid on the ground set $E$ with the set of independent sets $\fkI$, the set of bases $\fkB$ and the rank function $\rk:2^E \to \ZZ_{\geq 0}$.
We collect some fundamental objects associated with matroids:
\begin{itemize}
    \item For a subset $A\subset E$, let $\mathrm{cl}(A)$ be the closure of $A$, that is, $\mathrm{cl}(A):=\{e\in E : \rk(A\cup\set{e})=\rk(A)\}$. We say that a subset $F \subset E$ is a \textit{flat} if $\mathrm{cl}(F) = F$.
    \item A subset $S\subset E$ is called \textit{spanning} if it contains a basis of $M$.
    \item A subset $C\subset E$ is called a \textit{circuit} if it is a minimal dependent set of $M$. We denote the set of circuits of $M$ by $\fkC(M)$.
    \item The \textit{dual matroid} $M^* := (E,\fkI^*)$ of $M$ is
defined via
\begin{align*}
    \fkI^{*} \coloneqq \left\{ J \subseteq E : E \setminus J \ \text{is a
spanning set of } M \right\}
\end{align*}
with the rank function $\rk^*(S):=\rk(E\setminus S)+\#S-\rk(S)$ for $S\subset E$.
    \item A \textit{hyperplane} of $M$ is a flat of rank $\rk(M)-1$. It is known that a subset $S\subset E$ is a circuit of $M$ if and only if $E\setminus S$ is a hyperplane of $M^*$ (\cite[Proposition~2.1.6]{oxley2006matroid}).
\end{itemize}

Throughout this paper, we consider only matroids that are representable over $\RR$. 
Given a finite set of vectors $V:=\{\vb_1,\ldots,\vb_n\}\subset \ZZ^d$, we write $M(V)$ for the matroid on $V$ defined by linear independence over $\RR$.
Then we have $\rk(M(V))=r:=\rank A_V$.

Fix a basis $B=\{\bb_1,\ldots,\bb_{n-r}\}\subset \ZZ^n$ of $\ker(A_V)$ and let $\overline{V}:=\{\overline{\vb}_1,\ldots,\overline{\vb}_n\}\subset \ZZ^{n-r}$ denote the set of column vectors of the matrix $A^T_B$. 
Then $M^*(V)$ is isomorphic to $M(\overline{V})$ as matroids via the correspondence $\vb_i \mapsto \overline{\vb}_i$ for each $i$.

\medskip

Let $\calZ(V):=A_V[0,1]^n=\set{\sum_{i=1}^n a_i\vb_i : 0\le a_i \le1 \; \forall i\in [n]}\subset \RR^d$ be the zonotope generated by $\vb_1,\ldots,\vb_n$.
According to oriented matroid theory, there is a bijection between the flats of \(M(V)\) and the pairs of parallel opposite faces of \(Z(V)\).
Under this correspondence, hyperplanes of \(M(V)\) correspond exactly to pairs of parallel opposite facets of \(Z(V)\).
More precisely, we have the following lemma:

\begin{lem}[{cf. \cite[Proposition~2.2.2]{Bjorner1999oriented}}]\label{lem:facetzono}
Let $H\subset V$ be a hyperplane of $M(V)$.
Then there exists $\wb^*\in \Hom_\ZZ(\ZZ^d,\ZZ)$ with $\{\vb_i\in V : \wb^*(\vb_i)=0\}=H$, and
\begin{align*}
F^+_H&:=\set{\sum_{\substack{\vb_i\in V\\ \wb^*(\vb_i)> 0}} \vb_i+\sum_{\vb_j\in H } a_j\vb_j : a_j\in [0,1] }\quad \text{ and }\\
F^-_H&:=\set{\sum_{\substack{\vb_i\in V\\ \wb^*(\vb_i)< 0}} \vb_i+\sum_{\vb_j\in H } a_j\vb_j : a_j\in [0,1] }
\end{align*}
are facets of $\calZ(V)$.
Conversely, all facets of $\calZ(V)$ are obtained in this way.
\end{lem}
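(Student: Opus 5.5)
We prove Lemma~\ref{lem:facetzono} by viewing $\calZ(V)$ as the image of the cube $[0,1]^n$ under $A_V$ and computing, for each linear functional $\wb^*$, the face of $\calZ(V)$ on which $\wb^*$ is maximized. We may replace $\RR^d$ by the linear span of $V$, which has dimension $r$, so that $\calZ(V)$ is full-dimensional. For $\wb^*\in\Hom_\ZZ(\ZZ^d,\ZZ)$ we have
\[
\wb^*\Bigl(\sum_i a_i\vb_i\Bigr)=\sum_i a_i\,\wb^*(\vb_i),\qquad a\in[0,1]^n .
\]
This is maximized exactly when $a_i=1$ for $\wb^*(\vb_i)>0$, $a_i=0$ for $\wb^*(\vb_i)<0$, and $a_i$ arbitrary for $\wb^*(\vb_i)=0$. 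Hence the face of $\calZ(V)$ maximizing $\wb^*$ is precisely the set $F^+_H$ of the statement, with $H=\{\vb_i : \wb^*(\vb_i)=0\}$. Applying the same computation to $-\wb^*$ gives the face $F^-_H$.

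\textbf{Existence of $\wb^*$ and dimension count.} Let $H$ be a hyperplane of $M(V)$. Then $\mathrm{span}_\RR H$ is a rational subspace of dimension $r-1$ inside the $r$-dimensional span of $V$. Since everything is rational, we can pick an integral functional $\wb^*$ that vanishes on $\mathrm{span}\,H$ but not on the whole span of $V$. Because $H$ is a flat, every $\vb_i\notin H$ satisfies $\vb_i\notin\mathrm{span}\,H$. Choosing $\wb^*$ generic among the functionals vanishing on $\mathrm{span}\,H$ then gives $\wb^*(\vb_i)\neq 0$ for each such $i$; in fact any such $\wb^*$ works when we restrict to $\mathrm{span}\,V$, since the annihilator there is one-dimensional. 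So $\{\vb_i : \wb^*(\vb_i)=0\}=H$. The face $F^+_H$ is a translate of the zonotope $\calZ(H)$, so its dimension is $\rk(H)=r-1$. Thus $F^\pm_H$ are facets.

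\textbf{Converse.} Let $F$ be a facet of $\calZ(V)$. Then $F$ is the maximizing face of some integral functional $\wb^*$ that is nonzero on $\mathrm{span}\,V$. By the computation above, $F=F^+_{H'}$ with $H'=\{\vb_i:\wb^*(\vb_i)=0\}$, and $\dim F=\rk(H')$. For $F$ to be a facet we need $\rk(H')=r-1$. Moreover $H'$ is a flat: any $\vb\in\mathrm{cl}(H')$ lies in $\mathrm{span}\,H'\subseteq\ker\wb^*$, hence $\vb\in H'$. Therefore $H'$ is a hyperplane of $M(V)$.

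\textbf{Main obstacle.} The only delicate point is the dimension claim $\dim F^+_H=\rk(H)$, together with the requirement that $\wb^*$ vanish on $V$ exactly on $H$. Both follow once we restrict to $\mathrm{span}\,V$, where the annihilator of $\mathrm{span}\,H$ is a line, and use integrality (clearing denominators) to take $\wb^*$ integral.
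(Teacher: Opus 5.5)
Your argument is correct. The paper gives no proof of its own: it cites the oriented-matroid correspondence between faces of $\calZ(V)$ and covectors of $V$ \cite[Proposition~2.2.2]{Bjorner1999oriented} and reads off the facet case. You prove the facet case directly, in three steps:
\begin{enumerate}
\item The face of $\calZ(V)=A_V([0,1]^n)$ maximizing $\wb^*$ is the image of the face of the cube maximizing $\wb^*\circ A_V$. This image is $F^+_H$, where $H$ is the zero set of $\wb^*$ on $V$.
\item The dimension of $F^+_H$ is $\rk(H)$.
\item Zero sets of functionals are automatically flats, so the facets correspond exactly to hyperplanes of $M(V)$.
\end{enumerate}

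This is the standard mechanism behind the cited proposition. Your version has two advantages: it is self-contained, and it explicitly handles the requirement $\wb^*\in\Hom_\ZZ(\ZZ^d,\ZZ)$, which the real oriented-matroid statement does not mention. The citation gives more, namely the full correspondence between all flats and pairs of opposite faces, but only the top-dimensional part is used later in the paper.

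One step you assert without justification is the claim, in the converse, that a facet is the maximizing face of an \emph{integral} functional. This is true because $\calZ(V)$ is a lattice polytope, so its facets have rational normals. You can avoid the point altogether. Take any real functional $w$ maximized exactly on $F$, and conclude as you do that $H'=\{\vb_i : w(\vb_i)=0\}$ is a hyperplane. The integral $\wb^*$ from your first part agrees with $w$ up to a nonzero scalar on $\mathrm{span}\,V$, since both vanish on the $(r-1)$-dimensional $\mathrm{span}\,H'$ and neither vanishes on $\mathrm{span}\,V$. Hence $F$ is $F^+_{H'}$ or $F^-_{H'}$ for that $\wb^*$.
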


\subsection{Description of conic divisorial ideals of toric rings}
In this section, we use the notation introduced in Section~\ref{sec:pre}: recall that $V:=\{\vb_1,\ldots,\vb_n\}\subset \ZZ^d$ is a finite set of primitive integer vectors and assume that the rational polyhedral cone $C(V):=\RR_{\ge 0} V$ is strongly convex.
We set $R:=\kk[C^\vee(V)\cap \ZZ^d]$.
Take an exact sequence (\ref{cl_seq}) and let $\beta_i:=\pi(\chi_i)$ and $\widetilde{\beta_i}:=\pi_\RR(\chi_i)\in \Cl(R)_{\mathrm{fre}}$ for each $i=1,\ldots,n$.
In addition, we set $\calB:=\{\widetilde{\beta_1},\ldots,\widetilde{\beta_n}\}$.

Applying \(\Hom_{\ZZ}(-,\ZZ)\) to the exact sequence (\ref{cl_seq}), we obtain the exact sequence
\[
0 \longrightarrow \Hom_{\ZZ}(\Cl(R),\ZZ)
   \longrightarrow \Hom_\ZZ(\ZZ^n,\ZZ)
   \longrightarrow \Hom_{\ZZ}(\im(A^T_V),\ZZ).
\]
The last term is regarded as a subgroup of \(\Hom_\ZZ(\ZZ^d,\ZZ)\) via the surjection $A_V^T : \ZZ^d \to \im(A_V^T)$.
Under the standard identifications $\Hom_{\ZZ}(\ZZ^i,\ZZ)\cong \ZZ^i$,
the composition
\[
\Hom_\ZZ(\ZZ^n,\ZZ)
\longrightarrow
\Hom_{\ZZ}(\im(A^T_V),\ZZ)
\longrightarrow
\Hom_\ZZ(\ZZ^d,\ZZ)
\]
is identified with \(A_V\). Therefore, we have $\ker(A_V)\cong \Hom_{\ZZ}(\Cl(R),\ZZ)\cong \Hom_\ZZ(\Cl(R)_{\mathrm{fre}},\ZZ)$.
In particular, the matroid $M(\calB)$ is isomorphic to $M^*(V)$ under the correspondence $\vb_i \mapsto \widetilde{\beta_i}$.

For a circuit $C:=\{\vb_{i_1},\ldots,\vb_{i_k}\}$ of $M(V)$, there exists a primitive vector $\ab_C\in \ZZ^n$ satisfying $A_V\ab_C={\bf 0}$ (that is, $\ab_C\in \ker(A_V)$) such that $\ab_C(j)\neq 0$ for $j\in \{i_1,\ldots,i_k\}$ and $\ab_C(j)=0$ otherwise.
Such a vector $\ab_C$ is unique up to sign.
Since $\ker(A_V)\cong \Hom_\ZZ(\Cl(R)_{\mathrm{fre}},\ZZ)$, the vector $\ab_C$ corresponds to an element of $\Hom_\ZZ(\Cl(R)_{\mathrm{fre}},\ZZ)$, denoted by $\ab_C^*$.
Then we can see that $\ab_C^*(\widetilde{\beta_i})=\ab_C(i)$ for each $i=1,\ldots,n$.

From this observation, we can give an inequality description of $\calW(\calB)$:
\begin{thm}\label{thm:conicdesc}
    We have 
    \[
    \calW(\calB)=\bigcap_{C\in \fkC(M(V))}\set{\zb \in \Cl(R)_\RR : -|\ab_C^-|< \ab^*_C(\zb)< |\ab^+_C|}.
    \]
\end{thm}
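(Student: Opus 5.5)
The plan is to realize $\calW(\calB)$ as the interior of a zonotope in $\Cl(R)_\RR$. We then read off its facets from Lemma~\ref{lem:facetzono}, applied to the vector configuration $\calB$ rather than to $V$.

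First, set $\calZ(\calB):=\pi_\RR([0,1]^n)$, the zonotope generated by $\widetilde{\beta_1},\ldots,\widetilde{\beta_n}$. Since $\pi$ is surjective, the $\widetilde{\beta_i}$ span $\Cl(R)_\RR$, so $\calZ(\calB)$ is full-dimensional. By the discussion preceding the theorem, $M(\calB)\cong M^*(V)$. By \cite[Proposition~2.1.6]{oxley2006matroid}, the hyperplanes of $M(\calB)$ are exactly the sets $\{\widetilde{\beta_i} : i\notin C\}$ for circuits $C$ of $M(V)$.

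For such a hyperplane, the linear functional $\ab_C^*\in\Hom_\ZZ(\Cl(R)_{\mathrm{fre}},\ZZ)$ satisfies $\ab_C^*(\widetilde{\beta_i})=\ab_C(i)$. This value is zero exactly for $i\notin C$, so $\ab^*_C$ can serve as the functional $\wb^*$ in Lemma~\ref{lem:facetzono}. The lemma then says the two parallel facets of $\calZ(\calB)$ attached to $C$ lie on the hyperplanes
\[
\ab^*_C(\zb)=\sum_{\ab_C(i)>0}\ab_C(i)=|\ab_C^+| \quad\text{and}\quad \ab^*_C(\zb)=\sum_{\ab_C(i)<0}\ab_C(i)=-|\ab_C^-|.
\]
Since every facet arises this way, we get $\calZ(\calB)=\bigcap_C\{-|\ab_C^-|\le \ab^*_C(\zb)\le|\ab_C^+|\}$. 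The interior of $\calZ(\calB)$ is the same intersection with strict inequalities. So it remains to show that $\calW(\calB)$ equals the interior of $\calZ(\calB)$.

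For the inclusion of the interior into $\calW(\calB)$, note that $\pi_\RR$ is a surjective linear map, hence open. Therefore $\pi_\RR((0,1)^n)$ is an open convex subset of $\calZ(\calB)$, and it is dense because $(0,1)^n$ is dense in $[0,1]^n$. An open, convex, dense subset of a full-dimensional convex body coincides with its interior. This gives $\inte\calZ(\calB)=\pi_\RR((0,1)^n)\subset\calW(\calB)$.

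The converse inclusion is the main point, and it is where strong convexity of $C(V)$ is essential. Without it the claim fails: for instance $\mathbf 0\in\calW(\calB)$ would lie on the boundary. Since $\ab_C\in\ker(A_V)$ is nonzero, a vector $\ab_C$ with all entries $\ge0$ (or all $\le0$) would give a nontrivial nonnegative relation among the $\vb_i$, contradicting strong convexity of $C(V)$. Hence $\ab_C$ has both a positive and a negative entry.

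Now take $\zb=\pi_\RR(\alpha)$ with $\alpha\in[0,1)^n$, so $\ab^*_C(\zb)=\sum_i\alpha_i\ab_C(i)$. This sum equals $|\ab_C^+|$ only if $\alpha_i=1$ for every $i$ with $\ab_C(i)>0$, which is impossible because that index set is nonempty. Likewise it equals $-|\ab_C^-|$ only if $\alpha_i=1$ for every $i$ with $\ab_C(i)<0$, again impossible. Therefore all inequalities are strict, $\zb\in\inte\calZ(\calB)$, and the theorem follows.
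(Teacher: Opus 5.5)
Your proof is correct and follows the paper's argument. Like the paper, you identify the hyperplanes of $M(\calB)\cong M^*(V)$ with complements of circuits of $M(V)$, take $\ab_C^*$ as the functional in Lemma~\ref{lem:facetzono}, and read off the facet equations $\ab_C^*(\zb)=|\ab_C^+|$ and $\ab_C^*(\zb)=-|\ab_C^-|$. The only difference is that the paper cites \cite{matsushita2022conic} for $\inte(\calZ(\calB))=\calW(\calB)$, while you prove this equality directly: openness and density of $\pi_\RR((0,1)^n)$ give one inclusion, and strong convexity of $C(V)$, which forces each $\ab_C$ to have entries of both signs, gives the strict inequalities; this is a valid self-contained replacement for the citation.
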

\begin{proof}
    Since the relative interior $\inte(\calZ(\calB))$ of $\calZ(\calB)$ coincides with $\calW(\calB)$ (\cite[In the proof of Lemma~3.2]{matsushita2022conic}), it is enough to show that
    \[
    \calZ(\calB)=\bigcap_{C\in \fkC(M(V))}\set{\zb \in \Cl(R)_\RR : -|\ab_C^-|\le \ab^*_C(\zb)\le |\ab^+_C|}.
    \]
    Let $H:=\{\widetilde{\beta_{j_1}},\ldots,\widetilde{\beta_{j_k}}\}\subset \calB$ be a hyperplane of $M(\calB)$.
    Since $M(\calB)$ is isomorphic to $M^*(V)$, $C:=V\setminus \{\vb_{j_1},\ldots,\vb_{j_k}\}$ is a circuit of $M(V)$.
    From Lemma~\ref{lem:facetzono}, for any $\xb\in F^+_H$ (resp. $\xb\in F^-_H$), we have 
    \[
    \ab^*_C(\xb)=\sum_{\substack{\widetilde{\beta}\in \calB\\ \ab^*_C(\widetilde{\beta})> 0}}\ab^*_C(\widetilde{\beta})=|\ab_C^+|
    \quad (\text{resp. }\ab^*_C(\xb)=\sum_{\substack{\widetilde{\beta}\in \calB\\ \ab^*_C(\widetilde{\beta})< 0}}\ab^*_C(\widetilde{\beta})=-|\ab_C^-|).
    \]
    Thus, we can see that $\calZ(\calB)\subset \set{\zb \in \Cl(R)_\RR : -|\ab_C^-|\le \ab^*_C(\zb)\le |\ab^+_C|}$ and that $\{\zb \in \Cl(R)_\RR : \ab^*_C(\zb)= |\ab^+_C|\}$ (resp. $\{\zb \in \Cl(R)_\RR : \ab^*_C(\zb)= -|\ab^-_C|\}$) is the supporting hyperplane of $F^+_H$ (resp. $F^-_H$).
    This shows our assertion.
\end{proof}

\begin{ex}
    Let
    \begin{align*}
    \vb_1:&=(1,0,0,0), &\vb_2:&=(0,1,0,0), &\vb_3:&=(0,0,1,0),\\ 
    \vb_4:&=(-1,0,0,1), &\vb_5:
    &=(0,-1,0,1), &\vb_6:&=(0,0,-1,1).
    \end{align*}
    Then we have $\Cl(R)= \Cl(R)_{\mathrm{fre}}\cong \ZZ^2$ where $R=\kk[C^\vee(V)\cap \ZZ^4]$. Moreover, let
    \begin{align*}
    \beta_1=\beta_4=(1,0), \quad \beta_2=\beta_5=(0,1), \quad \beta_3=\beta_6=(-1,-1),
    \end{align*}
    $V:=\{\vb_1,\ldots,\vb_6\}$ and $\calB:=\{\beta_1,\ldots,\beta_6\}$.
    Then we get the exact sequence
    \begin{equation*}
 0 \longrightarrow \im(A^T_V) \lhook\joinrel\longrightarrow \ZZ^6 \xlongrightarrow{A_\calB}\ZZ^2 \longrightarrow 0. 
\end{equation*}
The matroid $M(V)$ has three circuits $C_1:=\{\vb_1,\vb_2,\vb_4,\vb_5\}$, $C_2:=\{\vb_2,\vb_3,\vb_5,\vb_6\}$ and $C_3:=\{\vb_1,\vb_3,\vb_4,\vb_6\}$, and we can see that 
\begin{align*}
    \ab_{C_1}=(1,-1,0,1,-1,0), \quad \ab_{C_2}=(0,1,-1,0,1,-1), \quad \ab_{C_3}=(1,0,-1,1,0,-1).
\end{align*}
Moreover, we can see that for $\zb=(z_1,z_2)=z_1\beta_1+z_2\beta_2\in \ZZ^2$, 
\begin{align*}
    \ab^*_{C_1}(\zb)=z_1-z_2, \quad \ab^*_{C_2}(\zb)=z_2, \quad \ab^*_{C_3}(\zb)=z_1.
\end{align*}
Therefore, we have 
\[
    \calW(\calB)=\set{\zb \in \Cl(R)_\RR : \hspace{-0.1cm}
\begin{array}{ccc}
-2 < z_1-z_2 < 2, \vspace{0.1cm}\\
 -2 < z_2 < 2, \vspace{0.1cm}\\
-2 < z_1 < 2
\end{array}}.
    \]
\end{ex}

\medskip

In the remainder of this subsection, we discuss the number of isomorphic classes of conic divisorial ideals.

To compute the number $\#\scrC(R)$, we use the Ehrhart polynomial of $\calZ(\calB)$.
For a lattice polytope $P \subset \RR^d$ with $r:=\dim P$, the \textit{Ehrhart polynomial} $\ehr_P(t)$ is defined by
\[
\ehr_P(t) := \#(tP \cap \ZZ^d)
\]
for positive integers $t$.
It is well known that $\ehr_P(t)$ is in fact a polynomial in $t$ of degree $r$. Moreover, by Ehrhart reciprocity, $(-1)^r\ehr_P(-1)$ is equal to the number of lattice points in the interior of $P$.
For background on Ehrhart polynomials, see, e.g., \cite{beck2007computing}.

In \cite{bach2024acyclotopes}, the zonotope $\calZ(\calB)$ is called the \textit{lattice Gale zonotope} of $V$ and its Ehrhart polynomial is calculated as follows:
\begin{thm}[{\cite[Theorem~5.1]{bach2024acyclotopes}}]\label{thm:ehrlatGalezono}
Let $r:=\rank A_V$. Then we have
\begin{align*}
 \ehr_{\calZ(\calB)}(t)=
 \sum_{S\subset V}\frac{g_r(A_S)}{g_r(A_V)} t^{n-\#S}.
\end{align*}
\end{thm}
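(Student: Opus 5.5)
Since this is a cited result \cite[Theorem~5.1]{bach2024acyclotopes}, the plan is to recover it from the standard Ehrhart formula for lattice zonotopes, combined with a Gale-duality identity for gcd's of maximal minors. A lattice zonotope $\calZ(\mathcal{U})=\sum_{u\in\mathcal{U}}[0,u]$ in a lattice $\Lambda$ has Ehrhart polynomial
\[
\ehr(t)=\sum_{I} h(I)\,t^{\#I},
\]
where $I$ runs over the linearly independent subsets of $\mathcal{U}$. Here $h(I)$ is the index of the lattice generated by $I$ inside $\Lambda\cap \mathrm{span}(I)$, i.e.\ the gcd of the maximal minors of the matrix with columns $I$ in coordinates of $\Lambda$. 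This follows from Shephard's decomposition of a zonotope into half-open parallelepipeds, one for each independent subset. I will apply it with $\mathcal{U}=\calB$ and $\Lambda=\Cl(R)_{\mathrm{fre}}\cong\ZZ^{n-r}$.

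Next, reindex by complements. Since $M(\calB)\cong M^*(V)$, a subset $\{\widetilde{\beta_i}: i\in I\}$ is independent exactly when $S:=V\setminus\{\vb_i : i\in I\}$ is spanning in $M(V)$. Then $\#I=n-\#S$, which gives the exponent $t^{n-\#S}$. For non-spanning $S$ we have $\rank A_S<r$, so $g_r(A_S)=0$. Hence those terms vanish, and the sum may be taken over all $S\subset V$ as in the statement.

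The main step is the identity
\[
h(I)=\frac{g_r(A_S)}{g_r(A_V)}.
\]
First reduce to $I$ a basis of $M(\calB)$, equivalently $S$ a basis of $M(V)$. For a general independent $I$, extend it to a basis inside its span, or apply the same argument after contracting the relevant flat. For bases, use the exact sequence $0\to\im(A_V^T)\to\ZZ^n\to\Cl(R)\to0$ and compare indices. The quotient $\ZZ^n/(\ZZ^{I}+\im A_V^T)$ has order $|\det|$ of $\calB_I$ modulo torsion, times torsion data. Its dual description is $\ZZ^{S}/(\text{image of }A_V^T$ restricted to the $S$-coordinates$)$, whose order is $|\det A_S|$ divided by the saturation index of $\im A_V$, namely $g_r(A_V)$. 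The torsion $\Cl(R)_{\mathrm{tor}}$, of order $g_r(A_V)$ via the Smith normal form formula $s_i=g_i/g_{i-1}$ from Section~\ref{sec:pre}, is exactly what produces the denominator.

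This index bookkeeping is where I expect the difficulty: keeping track of the passage from $\Cl(R)$ to $\Cl(R)_{\mathrm{fre}}$, and extending it to non-maximal $I$, where sublattice saturation must be handled via minors of the restricted matrix. Once the identity holds, summing over $I$ gives the formula.
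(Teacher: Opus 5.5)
The paper does not prove this statement; it only cites \cite[Theorem~5.1]{bach2024acyclotopes}, so your proposal has to stand on its own. The architecture is right: Stanley's formula $\ehr(t)=\sum_I h(I)\,t^{\#I}$, then reindexing independent sets of $M(\calB)\cong M^*(V)$ by their complements $S$ (the spanning sets of $M(V)$), together with $g_r(A_S)=0$ for non-spanning $S$. Writing $\calB_I:=\{\widetilde{\beta_i} : i\in I\}$, the identity $h(I)=g_r(A_S)/g_r(A_V)$ you need is also true. But your argument for it has two problems.

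First, in the basis case the bookkeeping does not close up. Projection to the $S$-coordinates gives $\ZZ^n/(\ZZ^I+\im A_V^T)\cong\ZZ^S/A_S^T\ZZ^d$, and this group has order $g_r(A_S)$, not $|\det A_S|/g_r(A_V)$. Indeed, $A_S$ is not square unless $r=d$, and nothing is divided on this side. The factor $g_r(A_V)=\#\Cl(R)_{\mathrm{tor}}$ enters only through the other description, of order $h(I)\cdot g_r(A_V)$; as written you account for it on both sides.

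Second, and more seriously, the non-basis case is left open. Yet it produces every coefficient except the leading one, and so is exactly what Corollary~\ref{cor:numberconic} uses through $\ehr(-1)$. Extending $I$ to a basis $J$ of $M(\calB)$ does not help, because the saturation index of $\ZZ\calB_I$ is not determined by $[\Lambda:\ZZ\calB_J]$. For example, in $\ZZ^2$ the set $\{(1,0)\}$ has index $1$ but lies in the basis $\{(1,0),(0,2)\}$ of index $2$. Likewise, ``contracting the relevant flat'' would require a Gale-dual configuration for $\calB_I$ in the lattice $\Lambda\cap\RR\calB_I$, which you never construct.

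Both issues disappear if you run your exact-sequence argument for an arbitrary independent $I$ and compare torsion subgroups rather than orders.
\begin{itemize}
\item Let $S=[n]\setminus I$, which is spanning, so $\rank A_S=r$. Put $M:=\Cl(R)/\langle\beta_i : i\in I\rangle\cong\ZZ^S/A_S^T\ZZ^d$. Smith normal form of $A_S^T$ gives $\#M_{\mathrm{tor}}=g_r(A_S)$.
\item The $\widetilde{\beta_i}$, $i\in I$, are linearly independent, so $\langle\beta_i : i\in I\rangle$ maps isomorphically onto the free group $\ZZ\calB_I$. Hence it meets $\Cl(R)_{\mathrm{tor}}$ trivially, and with $\Lambda=\Cl(R)_{\mathrm{fre}}$ there is an exact sequence $0\to\Cl(R)_{\mathrm{tor}}\to M\to\Lambda/\ZZ\calB_I\to 0$.
\item The kernel is finite, so torsion elements of $\Lambda/\ZZ\calB_I$ lift to torsion elements of $M$, and the sequence stays exact on torsion subgroups.
\item Finally, $(\Lambda/\ZZ\calB_I)_{\mathrm{tor}}=(\Lambda\cap\RR\calB_I)/\ZZ\calB_I$ has order $h(I)$.
\end{itemize}
Therefore $g_r(A_S)=g_r(A_V)\,h(I)$ for every independent $I$, uniformly and with no reduction to bases. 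Your summation then yields the theorem.
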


\begin{cor}\label{cor:numberconic}
    We have
    \[
    \#\scrC(R)=\sum_{S\subset V} (-1)^{\#S-r}g_{r}(A_S).
    \]
\end{cor}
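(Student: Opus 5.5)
The plan is to combine Proposition~\ref{prop:coniccorres} with Ehrhart reciprocity applied to Theorem~\ref{thm:ehrlatGalezono}. By Proposition~\ref{prop:coniccorres}, we have
\[
\#\scrC(R)=\#\bigl(\calW(\calB)\cap \Cl(R)_{\mathrm{fre}}\bigr)\cdot \#\Cl(R)_{\mathrm{tor}}.
\]
I will compute the two factors separately.

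For the torsion part, recall that $\Cl(R)\cong \ZZ^{n-r}\oplus \bigoplus_{i=1}^r \ZZ/s_i\ZZ$, where $s_i=g_i(A_V)/g_{i-1}(A_V)$. The product telescopes, so
\[
\#\Cl(R)_{\mathrm{tor}}=\prod_{i=1}^r s_i=g_r(A_V)/g_0(A_V)=g_r(A_V).
\]

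For the free part, identify $\Cl(R)_{\mathrm{fre}}$ with $\ZZ^{n-r}$ inside $\Cl(R)_\RR\cong\RR^{n-r}$. I will check the hypotheses needed for reciprocity.
\begin{enumerate}[(i)]
\item $\calZ(\calB)$ is a lattice polytope for this lattice, since its vertices are sums of subsets of the $\widetilde{\beta_i}\in \Cl(R)_{\mathrm{fre}}$.
\item $\calZ(\calB)$ is full-dimensional, of dimension $n-r$. This holds because $\pi_\RR$ is surjective, so $\calB$ spans $\Cl(R)_\RR$; equivalently, $M(\calB)\cong M^*(V)$ has rank $n-r$.
\end{enumerate}
As used in the proof of Theorem~\ref{thm:conicdesc}, $\calW(\calB)$ is the interior of $\calZ(\calB)$. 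Hence Ehrhart reciprocity gives
\[
\#\bigl(\calW(\calB)\cap\Cl(R)_{\mathrm{fre}}\bigr)=(-1)^{n-r}\ehr_{\calZ(\calB)}(-1).
\]
Substituting $t=-1$ into Theorem~\ref{thm:ehrlatGalezono} then yields
\[
\#\bigl(\calW(\calB)\cap\Cl(R)_{\mathrm{fre}}\bigr)=\sum_{S\subset V}\frac{g_r(A_S)}{g_r(A_V)}(-1)^{n-r}(-1)^{n-\#S}=\sum_{S\subset V}(-1)^{\#S-r}\frac{g_r(A_S)}{g_r(A_V)}.
\]

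Multiplying this by $g_r(A_V)$ gives the claimed formula. Note that subsets $S$ with $\rank A_S<r$ contribute $g_r(A_S)=0$, so they cause no trouble.

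The only delicate point is making sure that reciprocity is applied in the correct lattice and dimension. The lattice must be $\Cl(R)_{\mathrm{fre}}$, which is the one implicitly used in Theorem~\ref{thm:ehrlatGalezono}, and the dimension must be $n-r$. I would verify this against the normalization in \cite{bach2024acyclotopes}: their zonotope is built from the Gale dual $\overline V$ in $\ZZ^{n-r}$, whose lattice is precisely $\Hom(\ker A_V,\ZZ)\cong\Cl(R)_{\mathrm{fre}}$. The other check is that "relative interior" here coincides with the interior, which follows from full-dimensionality in (ii).
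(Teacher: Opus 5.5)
Your proposal is correct and is essentially the paper's proof: $\#\Cl(R)_{\mathrm{tor}}=g_r(A_V)$, $\calW(\calB)=\inte(\calZ(\calB))$, and Ehrhart reciprocity applied to Theorem~\ref{thm:ehrlatGalezono} at $t=-1$, all combined via Proposition~\ref{prop:coniccorres}. Your extra checks are sound and make explicit what the paper leaves implicit: the telescoping product of invariant factors, the lattice-polytope and full-dimensionality hypotheses, and the fact that the lattice is $\Cl(R)_{\mathrm{fre}}$.
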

\begin{proof}
    We write $\Cl(R)\cong \ZZ^{n-r} \oplus \ZZ/s_1\ZZ \oplus \cdots \oplus \ZZ/s_r\ZZ$, then $\#\Cl(R)_{\mathrm{tor}}=s_1\cdots s_r=g_r(A_V)$.
    Thus, it follows from Proposition~\ref{prop:coniccorres}, Theorem~\ref{thm:ehrlatGalezono} and the fact $\inte(\calZ(\calB))=\calW(\calB)$ that
    \begin{align*}
    \#\scrC(R)&=\#(\inte(\calZ(\calB))\cap \Cl(R)_{\mathrm{fre}})\cdot\#\Cl(R)_{\mathrm{tor}} \\
    &=((-1)^{n-r}\ehr_{\calZ(\calB)}(-1))\cdot g_r(A_V)=\sum_{S\subset V} (-1)^{\#S-r}g_{r}(A_S).
    \end{align*}
\end{proof}

\begin{rem}\label{rem:number}
    In fact, the above corollary also admits another proof using the multiplicity Tutte polynomial.

    For $\yb,\yb' \in \RR^d$, we denote $\yb \sim \yb'$ if $\calT(\ceil{A_V^T\yb})=\calT(\ceil{A_V^T\yb'})$.
This defines an equivalence relation on $\RR^d$, and hence induces a partition $\mathscr{S}$ of $\RR^d$.
An equivalence class (a cell of $\mathscr{S}$) containing $\yb \in \RR^d$ is of the form 
\[
\calD(\yb):=\{\xb\in \RR^d : \ceil{A_V^T\yb}(i)-1< A_V^T\xb(i)\le \ceil{A_V^T\yb}(i) \text{ for any }i\in [n]\},
\]
and it is full-dimensional since $C(V)$ is strongly convex.
Moreover, two conic divisorial ideals $\calT(\ceil{\nu(\yb)})$ and $ \calT(\ceil{\nu(\yb')})$ are isomorphic if and only if $\calD(\yb)=\calD(\yb')+\zb$ for some $\zb\in \ZZ^d$.
Thus, a full dimensional cell of the decomposition $\mathscr{T}:=\mathscr{S}/\ZZ^d$ of the $d$-th dimensional torus $\TT^d:=\RR^d/\ZZ^d$ one-to-one corresponds to an isomorphic class of conic divisorial ideals.
This argument already appears in \cite{bruns2005conic}.
In particular, the number of isomorphic classes of conic divisorial ideals of $R$ is equal to that of connected components of $\TT\setminus \bigcup_{i=1}^n\overline{H}_{\vb_i}$, where $\overline{H}_{\vb_i}$ denotes the image of $H_{\vb_i}:=\{\xb \in \RR^d : \langle \vb_i,\xb \rangle = 0\}$ under the quotient map $\RR^d\to \TT^d$.
The finite set $\{\overline{H}_{\vb_1},\ldots,\overline{H}_{\vb_n}\}$ is called a \textit{toric hyperplane arrangement}.

On the other hand, the matroid $M(V)$ carries the structure of a multiplicity (arithmetic) matroid, and its multiplicity Tutte polynomial is given as follows:
\[
M_V(x,y):=\sum_{S\subseteq V} g_{\rk(S)}(A_S) (x-1)^{\rk(V)-\rk(S)} (y-1)^{\#S-\rk(S)}.
\]
For background on multiplicity matroids and multiplicity Tutte polynomials, see, e.g., \cite{Michele2013arithmetic,Luca2012tutte}.
It is known from \cite[Corollary~5.16]{Luca2012tutte} that the number of connected components of $\TT\setminus \bigcup_{i=1}^n\overline{H}_{\vb_i}$ is given by $M_V(1,0)=\sum_{S\subset V} (-1)^{\#S-r}g_r(A_S)$, which agrees with our result.
\end{rem}

\bigskip

\section{Applications to toric rings associated with root systems}\label{sec:4}

In this section, we study the toric rings arising from root systems.

\subsection{Preliminaries on signed posets and their toric rings}\label{subsec:signed poset}

First, we recall the necessary definitions and notation for signed posets and signed graphs.

Let $\Phi\subset \RR^d$ be a root system.
For $P \subset \Phi$, the \textit{positive linear closure},
$\overline{P}$, is the set of $\alpha\in \Phi$ which are non-negative linear combinations of elements of $P$, i.e. $\overline{P}=C(P)\cap \Phi$.
A \textit{$\Phi$-poset} is a subset $P \subset \Phi$ such that (i) if $\alpha \in P$, then $-\alpha \notin P$, and (ii) $P = \overline{P}$.

For a $\Phi$-poset $P \subset \Phi$, we are interested only in the primitive ray structure of the cone $C(P)$.
Accordingly, we define $\widetilde{P}$ as the set of primitive generators of the extremal rays of $C(P)$.
In other words, one first removes all redundant generators from $P$, and then replaces each remaining generator by the primitive lattice vector on the same ray.
Although $\widetilde{P}$ may fail to be a subset of $\Phi$, this is precisely the level of generality relevant to the semigroup rings studied here.

We define the toric ring $R_P$ of a $\Phi$-poset $P$ as the toric ring determined by $\widetilde{P}$, that is,
\[
R_P:=\kk[C^\vee(\widetilde{P})\cap \ZZ^d].
\]

Let $\Phi_{A_{d-1}}$ and $\Phi_{B_d}$ be the root systems of type $A$ and $B$, respectively. More precisely, we set 
\begin{align*}
\Phi_{A_{d-1}}&:= \{\chi_i-\chi_j : i,j\in [d]\}\subset \RR^d  \text{ and }\\ 
\Phi_{B_d}&:=\{\pm\chi_i\pm\chi_j : i,j\in [d]\}\cup \{\pm \chi_i : i\in [d]\}\subset \RR^d.
\end{align*}
In this paper, we mainly consider the root system of type $B$.
From the perspective adopted here, this is the most general classical case: after passing to primitive ray generators, type $C$ yields no essentially new data, while type $A$ and type $D$ arise as subconfigurations of type $B$.
Thus, it is enough for our purposes to work throughout with $\Phi_{B_d}$.

When $\Phi=\Phi_{B_d}$, a $\Phi$-poset $P$ is usually called a \textit{signed poset}.
In this case, $P$ admits a convenient combinatorial description by means of an oriented signed graph, which we regard as its Hasse diagram.
To make this correspondence precise, we briefly recall the notion of an oriented signed graph.
We refer the reader to \cite{zaslavsky1982signed,zaslavsky1991orientation} for an introduction to signed graphs.

Graphs are not assumed to be simple and may contain loops (edges whose two endpoints coincide), multiple edges, halfedges (edges with exactly one endpoint) and loose edges (edges with no endpoints).
In this paper, however, we restrict attention to graphs with neither loops nor loose edges.
This is sufficient for our purposes, since signed graphs corresponding to signed posets of $\Phi_{B_d}$ have only (multiple) edges and halfedges.

A \textit{signed graph} $\Sigma$ is a pair $(\Gamma,\sigma)$ consisting of a graph $\Gamma$ on vertex set $V(\Gamma)$ with edge set $E(\Gamma)$ and a map $\sigma$ assigning a sign to each edge of $\Gamma$ except for halfedges.

An \textit{orientation} of a signed graph $\Sigma=(\Gamma,\sigma)$ is a map $\tau$ from the set of vertex-edge incidences to $\{\pm\}$ such that $\sigma(e)=-\tau(e,u)\tau(e,v)$ for every edge $e=\{u,v\}$.
Equivalently, one may begin with an unsigned graph $\Gamma=(V(\Gamma),E(\Gamma))$ and choose a \textit{bidirection} $\tau$ on $\Gamma$, that is, a map from the set of vertex-edge incidences to $\{\pm\}$. Then, for each edge $e=\{u,v\}$, setting
$\sigma(e)=-\tau(e,u)\tau(e,v)$ produces a signed graph $\Sigma=(\Gamma,\sigma)$ together with an orientation $\tau$. Thus, oriented signed graphs and bidirected graphs may be viewed as equivalent objects.
One may interpret $\tau$ as follows. If $\tau(e,v)=+$ (resp. $\tau(e,v)=-$), then the edge $e$ is directed into (resp. away from) the vertex $v$, that is, the arrow at the incidence $(e,v)$ points toward (resp. away from) $v$.

For an oriented signed graph $\Sigma$ with an orientation $\tau$, we define the \textit{incidence matrix}
$A_\Sigma\in\ZZ^{V(\Gamma)\times E(\Gamma)}$ by 
\begin{align*}
 A_\Sigma(v,e):=\begin{cases}
          0 & \text{ if $v$ and $e$ are not incident,}\\
          +1 & \text{ if $e$ enters $v$, that is, }\tau(v,e)=+\,,\\
          -1 & \text{ if $e$ exits $v$, that is, }\tau(v,e)=-.
         \end{cases}
\end{align*}

We now define the \textit{Hasse diagram} $\calH_P=(\Gamma_P,\sigma_P)$ of a signed poset $P\subset \Phi_{B_d}$ to be the oriented signed graph satisfying $A_{\calH_P}=A_{\widetilde{P}}$.
More precisely, $\calH_P$ is the bidirected graph on the vertex set $V(\Gamma_P)=[d]$ with the edge set $E(\Gamma_P)=\widetilde{P}$, where each $e\in\widetilde{P}$ is regarded as an edge as follows:
\begin{itemize}
    \item if $e=\pm \chi_i$ for some $i$, then $e$ is a halfedge incident to $i$;
    \item if $e=\pm \chi_i\pm \chi_j$ for some $i\neq j$, then $e$ is an edge joining $i$ and $j$.
\end{itemize}
The bidirection of an edge $e$ is chosen so that the column of $A_{\mathcal H_P}$ corresponding to $e$ is exactly the vector $e\in \widetilde{P}$.
Thus, we have $e=\tau(e,i)\chi_i$ (resp. $e=\tau(e,i)\chi_i+\tau(e,j)\chi_j$) if $e=\{i\}$ (resp. $e=\{i,j\}$).

\begin{ex}
Let $P\subset \Phi_{B_4}$ be a signed poset with 
\[
\widetilde{P}=\{e_1:=\chi_1,\; e_2:=\chi_2-\chi_1,\;
e_3:=\chi_2+\chi_3,\; e_4:=\chi_2-\chi_3,\; e_5:=-\chi_3-\chi_4,\; e_6:=\chi_1-\chi_4\}.
\]
The Hasse diagram of $P$ is depicted as in Figure~\ref{Figure:ex}, and we have 
\[
A_{\calH_P}=\begin{pmatrix}
1 &-1 &0 &0 &0 &1 \\
0 &1 &1 &1 &0 &0 \\
0 &0 &1 &-1 &-1 &0  \\
0 &0 &0 &0 &-1 &-1
\end{pmatrix}.
\]
\begin{figure}[H]

\centering
\scalebox{0.8}{
\begin{tikzpicture}[line width=0.05cm]
\def\r{2};
\coordinate (N0) at ($(0,0)+({\r*cos(45)},{\r*sin(45)})$); 
\coordinate (N1) at ($(0,0)+({\r*cos(135)},{\r*sin(135)})$); 
\coordinate (N2) at ($(0,0)+({\r*cos(225)},{\r*sin(225)})$); 
\coordinate (N3) at ($(0,0)+({\r*cos(315)},{\r*sin(315)})$); 

\coordinate (P1) at ($(N0)+(1.3,0)$);

\draw[
  postaction={
    decorate,
    decoration={
      markings,
      mark=at position 0.75 with {\arrow{>}},
      mark=at position 0.25 with {\arrow{>}}
    }
  }
] (N0) -- (N1);
\draw[
  postaction={
    decorate,
    decoration={
      markings,
      mark=at position 0.75 with {\arrow{>}},
      mark=at position 0.25 with {\arrow{<}}
    }
  }
] (N1) -- (N2);
\draw[
  postaction={
    decorate,
    decoration={
      markings,
      mark=at position 0.75 with {\arrow{<}},
      mark=at position 0.25 with {\arrow{>}}
    }
  }
] (N2) -- (N3);
\draw[
  postaction={
    decorate,
    decoration={
      markings,
      mark=at position 0.75 with {\arrow{>}},
      mark=at position 0.25 with {\arrow{>}}
    }
  }
] (N3) -- (N0);
\draw[
  postaction={
    decorate,
    decoration={
      markings,
      mark=at position 0.75 with {\arrow{<}},
      mark=at position 0.25 with {\arrow{<}}
    }
  }
] (N1) to [out=210,in=150] (N2);
\draw[
  postaction={
    decorate,
    decoration={
      markings,
      mark=at position 0.5 with {\arrow{<}}
    }
  }
] (N0) -- ($(N0)+(1,0)$);
\draw[dotted] ($(N0)+(1.2,0)$) -- ($(N0)+(1.5,0)$);

\draw [line width=0.05cm, fill=white] (N0) circle [radius=0.15] node[above right] {\Large $1$};
\draw [line width=0.05cm, fill=white] (N1) circle [radius=0.15] node[above left] {\Large $2$};
\draw [line width=0.05cm, fill=white] (N2) circle [radius=0.15] node[below left] {\Large $3$};
\draw [line width=0.05cm, fill=white] (N3) circle [radius=0.15] node[below right] {\Large $4$};
\def\l{1.8};
\node[] at ($(0,0)+({\l*cos(90)},{\l*sin(90)})$) {\Large $e_2$};
\node[] at ($(0,0)+({\l*cos(270)},{\l*sin(270)})$) {\Large $e_5$};
\node[] at ($(0,0)+({\l*cos(0)},{\l*sin(0)})$) {\Large $e_6$};
\node[] at (-1,0) {\Large $e_3$};
\node[] at ($(N0)+(0.65,-0.4)$) {\Large $e_1$};
\node[] at (-2.5,0) {\Large $e_4$};
\end{tikzpicture}}
\caption{The Hasse diagram $\calH_P$}
\label{Figure:ex}

\end{figure}

\end{ex}

Let $\Sigma=(\Gamma,\sigma)$ be a signed graph, and let 
For $F\subset E(\Gamma)$, we write $\Sigma(F)$ for the \textit{signed subgraph} whose underlying graph is $\Gamma(F):=(\bigcup_{e\in F} e, F)$ and whose signature is the restriction of $\sigma$ to $F$.

We recall the following notions from signed graph theory:

\begin{itemize}
   \item A \textit{walk} is a sequence $(e_1,v_1,e_2,\dots,v_{\ell-1},e_\ell)$ of vertices $v_i$ and edges $e_i$, such that each vertex $v_i$ is incident with both $e_i$ and $e_{i+1}$. (the edges and vertices in the sequences are not necessarily pairwise distinct).
   We call $\ell$ the \textit{length} of $W$.
   We denote by $V(W):=\{v_1,\ldots,v_{\ell-1}\}$ and $E(W):=\{e_1,\ldots,e_\ell\}$ the multisets of vertices and edges appearing in $W$, respectively.
   \item We say that a walk $W=(e_1,v_1,e_2,\dots,v_{\ell-1},e_\ell)$ is \textit{positive} (resp. \textit{negative}) if $\sigma(e_1)\cdots \sigma(e_\ell)=+$ (resp. $\sigma(e_1)\cdots \sigma(e_\ell)=-$).
   \item A \textit{path} is a walk in which all the edges and vertices are pairwise distinct. 
   \item A signed graph $\Sigma$ is \textit{connected} if there exists a path between any two nodes.
   \item We say that a walk $W=(e_1,v_1,e_2,\dots,v_{\ell-1},e_\ell)$ is \textit{closed} if $e_1$ and $e_\ell$ are incident with the same vertex $v_\ell$ and $v_\ell\neq v_1, v_{\ell-1}$.
   We call $v_\ell$ the \textit{base vertex} of $W$.
   \item A \textit{circle} is a closed path.
    \item A \textit{signed tree} is a connected signed graph with no circles or halfedges.
    \item A \textit{signed halfedge-tree} is a connected signed graph with no circles and a single halfedge.
    \item A \textit{signed pseudo-tree} is a connected signed graph with no halfedges that contains a single negative circle.
    \item A \textit{signed pseudo-forest} is a signed graph whose connected components are signed trees, signed halfedge-trees, or signed pseudo-trees.
    \item A \textit{circuit} is a signed subgraph with an inclusion minimal set of edges that is not a pseudo-forest.
    We denote the set of circuits of $\Sigma$ by $\fkC(\Sigma)$.
\end{itemize}

We can see that every circuit of $\calH_P$ is one of the following four signed graphs:
\begin{itemize}
  \item[($\calC_{T_1}$)] a positive circle (see Figure~\ref{fig:circuit1});
  \item[($\calC_{T_2}$)] two negative circles joined by a path (see Figure~\ref{fig:circuit2});
  \item[($\calC_{T_3}$)] a negative circle joined to a halfedge by a path (see Figure~\ref{fig:circuit3});
  \item[($\calC_{T_4}$)] two halfedges joined by a path (see Figure~\ref{fig:circuit4}).
\end{itemize}

\begin{figure}[H]
    \centering
    \begin{minipage}{0.36\linewidth}
        \centering
        \scalebox{0.8}{
            \begin{tikzpicture}[line width=0.05cm]
            \def\r{1.5};
\coordinate (N0) at ($(0,0)+({\r*cos(18)},{\r*sin(18)})$); 
\coordinate (N1) at ($(0,0)+({\r*cos(90)},{\r*sin(90)})$); 
\coordinate (N2) at ($(0,0)+({\r*cos(162)},{\r*sin(162)})$); 
\coordinate (N3) at ($(0,0)+({\r*cos(234)},{\r*sin(234)})$); 
\coordinate (N4) at ($(0,0)+({\r*cos(306)},{\r*sin(306)})$);
            \draw ($(0,0)+({\r*cos(-70)},{\r*sin(-70)})$) arc [start angle=-70, delta angle=320, radius=\r] ; 
            \draw[dotted] ($(0,0)+({\r*cos(-105)},{\r*sin(-105)})$) arc [start angle=-105, delta angle=30, radius=\r] ;
\draw [line width=0.05cm, fill=white] (N0) circle [radius=0.15] node[above right] {\Large $u_0$};
\draw [line width=0.05cm, fill=white] (N1) circle [radius=0.15] node[] at ($(N1)+(0,0.4)$) {\Large $u_1$};
\draw [line width=0.05cm, fill=white] (N2) circle [radius=0.15] node[above left] {\Large $u_2$};
\draw [line width=0.05cm, fill=white] (N3) circle [radius=0.15] node[below left] {\Large $u_3$};
\draw [line width=0.05cm, fill=white] (N4) circle [radius=0.15] node[below right] {\Large $u_{k-1}$};
\node[above right] at ($(0,0)+({\r*cos(54)},{\r*sin(54)})$) {\Large $f_1$};
\node[above left] at ($(0,0)+({\r*cos(126)},{\r*sin(126)})$) {\Large $f_2$};
\node[below left] at ($(0,0)+({\r*cos(198)},{\r*sin(198)})$) {\Large $f_3$};
\node[below right] at ($(0,0)+({\r*cos(342)},{\r*sin(342)})$) {\Large $f_k$};
            \end{tikzpicture}}
        \caption{\newline The circuit $\calC_{T_1}$}
        \label{fig:circuit1}
    \end{minipage}
    \begin{minipage}{0.63\linewidth}
        \centering
        \scalebox{0.8}{
        \begin{tikzpicture}[line width=0.05cm]
\def\r{1.5};
\coordinate (N0) at ($(0,0)+({\r*cos(18)},{\r*sin(18)})$); 
\coordinate (N1) at ($(0,0)+({\r*cos(90)},{\r*sin(90)})$); 
\coordinate (N2) at ($(0,0)+({\r*cos(162)},{\r*sin(162)})$); 
\coordinate (N3) at ($(0,0)+({\r*cos(234)},{\r*sin(234)})$); 
\coordinate (N4) at ($(0,0)+({\r*cos(306)},{\r*sin(306)})$);
\coordinate (C2) at (7,0); 

\coordinate (W0) at ($(C2)+({\r*cos(162)},{\r*sin(162)})$); 
\coordinate (X1) at ($(C2)+({\r*cos(234)},{\r*sin(234)})$);
\coordinate (X2) at ($(C2)+({\r*cos(306)},{\r*sin(306)})$);
\coordinate (X3) at ($(C2)+({\r*cos(18)},{\r*sin(18)})$);
\coordinate (WL) at ($(C2)+({\r*cos(90)},{\r*sin(90)})$); 

\coordinate (P1) at ($(N0)+(1.3,0)$);
\coordinate (P2) at ($(W0)+(-1.3,0)$);

            \draw ($(0,0)+({\r*cos(-70)},{\r*sin(-70)})$) arc [start angle=-70, delta angle=320, radius=\r] ; 
            \draw[dotted] ($(0,0)+({\r*cos(-105)},{\r*sin(-105)})$) arc [start angle=-105, delta angle=30, radius=\r] ;


\draw
  ($(C2)+({\r*cos(-70)},{\r*sin(-70)})$)
  arc[start angle=-70, delta angle=320, radius=\r];

\draw[dotted]
  ($(C2)+({\r*cos(-105)},{\r*sin(-105)})$)
  arc[start angle=-105, delta angle=30, radius=\r];

\draw (N0) -- (P1);
\draw (P2) -- (W0);
\draw (P2) -- ($(P2)+(-0.4,0)$);
\draw (P1) -- ($(P1)+(0.4,0)$);
\draw[dotted] ($(P1)+(0.5,0)$) -- ($(P2)+(-0.5,0)$);

\draw [line width=0.05cm, fill=white] (N0) circle [radius=0.15] node[below left] {\Large $u_0$};
\draw [line width=0.05cm, fill=white] (N1) circle [radius=0.15] node[] at ($(N1)+(0,0.4)$) {\Large $u_1$};
\draw [line width=0.05cm, fill=white] (N2) circle [radius=0.15] node[above left] {\Large $u_2$};
\draw [line width=0.05cm, fill=white] (N3) circle [radius=0.15] node[below left] {\Large $u_3$};
\draw [line width=0.05cm, fill=white] (N4) circle [radius=0.15] node[below right] {\Large $u_{k-1}$};
\node[above right] at ($(0,0)+({\r*cos(54)},{\r*sin(54)})$) {\Large $f_1$};
\node[above left] at ($(0,0)+({\r*cos(126)},{\r*sin(126)})$) {\Large $f_2$};
\node[below left] at ($(0,0)+({\r*cos(198)},{\r*sin(198)})$) {\Large $f_3$};
\node[below right] at ($(0,0)+({\r*cos(342)},{\r*sin(342)})$) {\Large $f_k$};
\node[above right] at ($(C2)+({\r*cos(54)},{\r*sin(54)})$) {\Large $g_2$};
\node[above left] at ($(C2)+({\r*cos(126)},{\r*sin(126)})$) {\Large $g_1$};
\node[below left] at ($(C2)+({\r*cos(198)},{\r*sin(198)})$) {\Large $g_l$};
\node[below right] at ($(C2)+({\r*cos(342)},{\r*sin(342)})$) {\Large $g_3$};

\node[] at ($(P1)+(-0.65,-0.4)$) {\Large $h_1$};
\node[] at ($(P2)+(0.65,-0.4)$) {\Large $h_m$};
\draw [line width=0.05cm, fill=white] (W0) circle [radius=0.15]
  node[below right] {\Large $w_0$};

\draw [line width=0.05cm, fill=white] (X1) circle [radius=0.15]
  node[below left] {\Large $w_{l-1}$};

\draw [line width=0.05cm, fill=white] (X2) circle [radius=0.15]
  node[below right] {\Large $w_3$};

\draw [line width=0.05cm, fill=white] (X3) circle [radius=0.15]
  node[above right] {\Large $w_2$};

\draw [line width=0.05cm, fill=white] (WL) circle [radius=0.15]
  node[] at ($(WL)+(0,0.4)$) {\Large $w_1$};

\draw [line width=0.05cm, fill=white] (P1) circle [radius=0.15] node[] at ($(P1)+(0,0.5)$) {\Large $\mu_1$};

\draw [line width=0.05cm, fill=white] (P2) circle [radius=0.15] node[] at ($(P2)+(0,0.5)$) {\Large $\mu_{m-1}$};

            \end{tikzpicture}}
        \caption{The circuit $\calC_{T_2}$}
        \label{fig:circuit2}
    \end{minipage}
\end{figure}

\begin{figure}[H]

    \begin{minipage}{0.49\linewidth}
        \centering
        \scalebox{0.8}{
            \begin{tikzpicture}[line width=0.05cm]
\def\r{1.5};
\coordinate (N0) at ($(0,0)+({\r*cos(18)},{\r*sin(18)})$); 
\coordinate (N1) at ($(0,0)+({\r*cos(90)},{\r*sin(90)})$); 
\coordinate (N2) at ($(0,0)+({\r*cos(162)},{\r*sin(162)})$); 
\coordinate (N3) at ($(0,0)+({\r*cos(234)},{\r*sin(234)})$); 
\coordinate (N4) at ($(0,0)+({\r*cos(306)},{\r*sin(306)})$);
\coordinate (C2) at (7,0); 

\coordinate (W0) at ($(C2)+({\r*cos(162)},{\r*sin(162)})$); 
\coordinate (X1) at ($(C2)+({\r*cos(234)},{\r*sin(234)})$);
\coordinate (X2) at ($(C2)+({\r*cos(306)},{\r*sin(306)})$);
\coordinate (X3) at ($(C2)+({\r*cos(18)},{\r*sin(18)})$);
\coordinate (WL) at ($(C2)+({\r*cos(90)},{\r*sin(90)})$); 

\coordinate (P1) at ($(N0)+(1.3,0)$);
\coordinate (P2) at ($(W0)+(-1.3,0)$);

            \draw ($(0,0)+({\r*cos(-70)},{\r*sin(-70)})$) arc [start angle=-70, delta angle=320, radius=\r] ; 
            \draw[dotted] ($(0,0)+({\r*cos(-105)},{\r*sin(-105)})$) arc [start angle=-105, delta angle=30, radius=\r] ;

\draw (N0) -- (P1);
\draw (P2) -- (W0);
\draw (P2) -- ($(P2)+(-0.4,0)$);
\draw (P1) -- ($(P1)+(0.4,0)$);
\draw[dotted] ($(P1)+(0.5,0)$) -- ($(P2)+(-0.5,0)$);
\draw (W0) -- ($(W0)+(0.6,0)$);
\draw[dotted] ($(W0)+(0.7,0)$) -- ($(W0)+(1.3,0)$);

\draw [line width=0.05cm, fill=white] (N0) circle [radius=0.15] node[below left] {\Large $u_0$};
\draw [line width=0.05cm, fill=white] (N1) circle [radius=0.15] node[] at ($(N1)+(0,0.4)$) {\Large $u_1$};
\draw [line width=0.05cm, fill=white] (N2) circle [radius=0.15] node[above left] {\Large $u_2$};
\draw [line width=0.05cm, fill=white] (N3) circle [radius=0.15] node[below left] {\Large $u_3$};
\draw [line width=0.05cm, fill=white] (N4) circle [radius=0.15] node[below right] {\Large $u_{k-1}$};
\node[above right] at ($(0,0)+({\r*cos(54)},{\r*sin(54)})$) {\Large $f_1$};
\node[above left] at ($(0,0)+({\r*cos(126)},{\r*sin(126)})$) {\Large $f_2$};
\node[below left] at ($(0,0)+({\r*cos(198)},{\r*sin(198)})$) {\Large $f_3$};
\node[below right] at ($(0,0)+({\r*cos(342)},{\r*sin(342)})$) {\Large $f_k$};

\node[] at ($(P1)+(-0.65,-0.4)$) {\Large $h_1$};
\node[] at ($(P2)+(0.65,-0.4)$) {\Large $h_m$};
\node[] at ($(W0)+(0.65,-0.4)$) {\Large $h_{m+1}$};
 \draw [line width=0.05cm, fill=white] (W0) circle [radius=0.15] node[] at ($(W0)+(0,0.4)$) {\Large $\mu_m$};

\draw [line width=0.05cm, fill=white] (P1) circle [radius=0.15] node[] at ($(P1)+(0,0.5)$) {\Large $\mu_1$};

\draw [line width=0.05cm, fill=white] (P2) circle [radius=0.15] node[] at ($(P2)+(0,0.5)$) {\Large $\mu_{m-1}$};
            \end{tikzpicture}}
        \caption{The circuit $\calC_{T_3}$}
        \label{fig:circuit3}
    \end{minipage}
    \begin{minipage}{0.5\linewidth}
        \centering
        \scalebox{0.8}{
            \begin{tikzpicture}[line width=0.05cm]
\def\r{1.5};
\coordinate (N0) at ($(0,0)+({\r*cos(18)},{\r*sin(18)})$); 
\coordinate (N1) at ($(0,0)+({\r*cos(90)},{\r*sin(90)})$); 
\coordinate (N2) at ($(0,0)+({\r*cos(162)},{\r*sin(162)})$); 
\coordinate (N3) at ($(0,0)+({\r*cos(234)},{\r*sin(234)})$); 
\coordinate (N4) at ($(0,0)+({\r*cos(306)},{\r*sin(306)})$);
\coordinate (C2) at (7,0); 

\coordinate (W0) at ($(C2)+({\r*cos(162)},{\r*sin(162)})$); 
\coordinate (X1) at ($(C2)+({\r*cos(234)},{\r*sin(234)})$);
\coordinate (X2) at ($(C2)+({\r*cos(306)},{\r*sin(306)})$);
\coordinate (X3) at ($(C2)+({\r*cos(18)},{\r*sin(18)})$);
\coordinate (WL) at ($(C2)+({\r*cos(90)},{\r*sin(90)})$); 

\coordinate (P1) at ($(N0)+(1.3,0)$);
\coordinate (P2) at ($(W0)+(-1.3,0)$);

\draw (N0) -- (P1);
\draw (P2) -- (W0);
\draw (P2) -- ($(P2)+(-0.4,0)$);
\draw (P1) -- ($(P1)+(0.4,0)$);
\draw[dotted] ($(P1)+(0.5,0)$) -- ($(P2)+(-0.5,0)$);
\draw (W0) -- ($(W0)+(0.6,0)$);
\draw (N0) -- ($(N0)+(-0.6,0)$);
\draw[dotted] ($(N0)+(-0.7,0)$) -- ($(N0)+(-1.3,0)$);
\draw[dotted] ($(W0)+(0.7,0)$) -- ($(W0)+(1.3,0)$);

\draw [line width=0.05cm, fill=white] (N0) circle [radius=0.15] node[] at ($(N0)+(0,0.4)$) {\Large $p_0$};

\node[] at ($(P1)+(-0.65,-0.4)$) {\Large $h_1$};
\node[] at ($(P2)+(0.65,-0.4)$) {\Large $h_m$};
\node[] at ($(N0)+(-0.65,-0.4)$) {\Large $h_0$};
\node[] at ($(W0)+(0.65,-0.4)$) {\Large $h_{m+1}$};
\draw [line width=0.05cm, fill=white] (W0) circle [radius=0.15]
  node[] at ($(W0)+(0,0.4)$) {\Large $\mu_m$};
\draw [line width=0.05cm, fill=white] (P1) circle [radius=0.15] node[] at ($(P1)+(0,0.4)$) {\Large $\mu_1$};

\draw [line width=0.05cm, fill=white] (P2) circle [radius=0.15] node[] at ($(P2)+(0,0.4)$) {\Large $\mu_{m-1}$};
            \end{tikzpicture}}
        \caption{The circuit $\calC_{T_4}$}
        \label{fig:circuit4}
    \end{minipage}
\end{figure}

For each type of circuit, we associate a \textit{canonical walk} $W_\calC$, defined as follows:
\begin{itemize}
  \item[($\calC_{T_1}$)] 
  For a positive circle, a canonical walk is the shortest closed walk that traverses the circle exactly once.
  For example, the walk 
  \[W_{\calC_{T_1}}:=(f_1,u_1,f_2,\ldots,u_{k-1},f_k)\]
  in the circuit $\calC_{T_1}$ depicted in Figure~\ref{fig:circuit1} is canonical.

  \item[($\calC_{T_2}$)] 
  For two negative circles joined by a path, a canonical walk is the shortest closed walk that traverses every edge of the circuit at least once; equivalently, it consists of traversing one negative circle once, then the connecting path, then the other negative circle once, and finally returning along the same path.
  For example, the walk 
  \begin{multline*}
      W_{\calC_{T_2}}:=(f_1,u_1,\ldots,u_{k-1},f_k,u_0,h_1,\mu_1,\ldots,\mu_{m-1},h_m,w_0,\\
      g_1,w_1,g_2,\ldots,w_{l-1},g_l,w_0,h_m,\mu_{m-1},\ldots,\mu_1,h_1)
  \end{multline*}
  in the circuit $\calC_{T_2}$ depicted in Figure~\ref{fig:circuit2} is canonical.

  \item[($\calC_{T_3}$)] 
  For a negative circle joined to a halfedge by a path, a canonical walk is the shortest walk that starts at the halfedge, traverses the connecting path, goes once around the negative circle, and then returns along the same path to end at the halfedge.
  For example, the walk 
  \[W_{\calC_{T_3}}:=(h_{m+1},\mu_m,\ldots,\mu_1,h_1,u_0,f_1,u_1,\ldots,u_{k-1},f_k,u_0,h_1,\mu_1,\ldots,\mu_m,h_{m+1})\]
  in the circuit $\calC_{T_3}$ depicted in Figure~\ref{fig:circuit3} is canonical.

  \item[($\calC_{T_4}$)] 
  For two halfedges joined by a path, a canonical walk is the shortest walk that starts at one halfedge and ends at the other.
  For example, the walk 
  \[W_{\calC_{T_4}}:=(h_0,\mu_0,h_1,\mu_1,\ldots,\mu_{m-1},h_m,\mu_m,h_{m+1})\]
  in the circuit $\calC_{T_4}$ depicted in Figure~\ref{fig:circuit4} is canonical.
\end{itemize}

For a walk $W:=(e_1,v_1,e_2,\dots,v_{\ell-1},e_\ell)$ on $\calH_P$, we define the map $\rho_W : E(W) \to \{\pm1\}$ as follows:
\[
\rho_W(e_i):=\begin{cases}
    1 &\text{ if }i=1, \\
    -\tau(e_{i-1},v_{i-1})\tau(e_i,v_{i-1})\rho_W(e_{i-1}) &\text{ if }i\ge 2.
\end{cases}
\]
Note that for each $i\in [\ell-1]$, we have
\begin{equation}\label{eq:1}
\begin{split}
    \tau(e_i,v_i)\rho_W(e_i)+\tau(e_{i+1},v_i)\rho(e_{i+1})&=
    \tau(e_i,v_i)\rho_W(e_i)+\tau(e_{i+1},v_i)(-\tau(e_i,v_i)\tau(e_{i+1},v_i)\rho_W(e_i))\\
    &=\tau(e_i,v_i)\rho_W(e_i)-\tau(e_i,v_i)\rho_W(e_i)=0.
\end{split}
\end{equation}
Moreover, if $W$ is a positive closed walk with the base vertex $v_\ell$, then we have
\begin{equation*}
\begin{split}
    \tau(e_\ell,v)\rho_W(e_\ell)&=(-\tau(e_\ell,v_{\ell-1})\sigma(e_\ell))\cdot(-\tau(e_{\ell-1},v_{\ell-1})\tau(e_\ell,v_{\ell-1})\rho_W(e_{\ell-1}))\\
    &=\sigma(e_\ell)\tau(e_{\ell-1},v_{\ell-1})\rho_W(e_{\ell-1}) =\sigma(e_\ell)\cdots \sigma(e_2)\tau(e_1,v_1)\rho_W(e_1) \\
    &=\sigma(e_\ell)\cdots \sigma(e_2)(-\tau(e_1,v)\sigma(e_1))\rho_W(e_1)=-\tau(e_1,v)\rho_W(e_1).
\end{split}
\end{equation*}
Thus, by combining (\ref{eq:1}), we have
\begin{align}\label{ali:1}
    \tau(e_i,v_i)\rho_W(e_i)+\tau(e_{i+1},v_i)\rho_W(e_{i+1})=0
\end{align}
for each $i\in [\ell]$, where we let $e_{\ell+1}=e_1$.

\begin{thm}[{\cite{zaslavsky1982signed}}]\label{thm:signedindependence}
 Let $P\subset \Phi_{B_d}$ be a signed poset.
 \begin{itemize}
     \item[(i)] A subset $F\subset E(\Gamma_P)$ is an independent set of $M(\widetilde{P})$ if and only if the subgraph $\calH_P(F)$ is a signed pseudo-forest.
     In particular, a subset $T\subset E(\Gamma_P)$ is a basis of $M(\widetilde{P})$ if and only if the subgraph $\Sigma(T)$ is an inclusion maximal signed pseudo-forest of $\calH_P$.
     \item[(ii)] a subset $C\subset E(\Gamma_P)$ is a circuit of $M(\widetilde{P})$ if and only if the subgraph $\Sigma(C)$ is a circuit of $\calH_P$.
 \end{itemize}
 
\end{thm}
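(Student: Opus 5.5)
I would prove (i) by showing two things: every signed pseudo-forest has linearly independent columns in $A_{\calH_P}$, and every non-pseudo-forest contains a nonzero linear dependence. Part (ii) then follows formally. Since circuits of $\calH_P$ are defined as inclusion-minimal non-pseudo-forests, and circuits of $M(\widetilde{P})$ are minimal dependent sets, the two notions coincide once (i) is known. The basis statement in (i) is likewise immediate, because bases are the maximal independent sets. Throughout, the columns of $A_{\calH_P}$ are exactly the vectors of $\widetilde{P}$, so linear (in)dependence of $F\subset\widetilde{P}$ can be read off the columns indexed by $F$.

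\emph{Independence.} The vertex sets of distinct components of $\calH_P(F)$ are disjoint, so their column blocks have disjoint supports. It therefore suffices to treat one connected component $K$, with $\#V(K)$ vertices and $\#E(K)$ edges.
\begin{itemize}
\item If $K$ is a tree, then $\#E(K)=\#V(K)-1$.
\item If $K$ is a halfedge-tree or a pseudo-tree, then $\#E(K)=\#V(K)$.
\end{itemize}
I would argue by induction, peeling off a leaf. Let $v$ be a vertex of degree one, incident to a single ordinary (non-half) edge $e$. Then the row of $v$ has exactly one nonzero entry, namely $\pm1$ in column $e$. In any linear relation the coefficient of $e$ must therefore vanish, and we may delete $v$ and $e$ and continue.

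Repeating this reduces a tree to a single vertex with no edges. It reduces a halfedge-tree to a single vertex carrying a halfedge $\pm\chi_v$, which is a nonzero vector. It reduces a pseudo-tree to a negative circle. For the negative circle, I would directly show that its square incidence matrix is nonsingular. Suppose $\sum_i c_i e_i=0$ around the circle. At each vertex the two incident coefficients are forced to satisfy a relation like \eqref{eq:1}, so $c_i=\lambda\rho(e_i)$ for a single scalar $\lambda$. Going once around a \emph{negative} circle, the sign computation preceding \eqref{ali:1} fails: it yields $\tau(e_\ell,v)\rho(e_\ell)=+\tau(e_1,v)\rho(e_1)$ instead of the opposite sign. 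The relation at the base vertex then forces $2\lambda=0$, so $\lambda=0$.

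\emph{Dependence.} Suppose $\calH_P(F)$ is not a pseudo-forest. Then some component has $\#E>\#V$, or is a tree-like component containing a positive circle, or contains two halfedges. In each case I would show that $F$ contains one of the four configurations $\calC_{T_1}$–$\calC_{T_4}$. This is standard graph bookkeeping: take a minimal bad subgraph and use the fact that a connected graph with more edges than vertices contains two independent cycles or halfedges. For each configuration I would exhibit an explicit dependence using the canonical walk $W$. Let $\ab(e)$ be the sum of $\rho_W$ over the occurrences of $e$ in $W$. Then \eqref{ali:1} for closed walks, and \eqref{eq:1} together with the endpoint halfedge terms for open walks, show that $\sum_e \ab(e)\,e=0$: the contributions cancel vertex by vertex.

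The main obstacle is checking that this vector is nonzero, in particular for $\calC_{T_2}$ and $\calC_{T_3}$, where the connecting path is traversed twice. One must check that the two traversals give the same $\rho_W$-sign rather than cancelling. Here I would use the fact that the intervening circle is negative: traversing it flips $\rho_W$ once, which exactly compensates for the reversal of direction along the path. The path edges then receive coefficient $\pm2$ and the circle edges receive $\pm1$. For $\calC_{T_1}$ and $\calC_{T_4}$, each edge occurs once and nonvanishing is immediate.
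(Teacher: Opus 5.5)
The paper does not prove this statement. It quotes it from Zaslavsky, where it is the standard fact that the incidence matrix of a signed graph represents the signed-graphic (frame) matroid over fields of characteristic $\neq 2$; that matroid's independent sets are defined combinatorially as pseudo-forests. Your argument is a correct, self-contained substitute. Leaf-peeling plus the $\rho$-recursion around a negative circle gives independence, since the base-vertex row forces $2\lambda=0$. The canonical-walk vectors give dependence, and (ii) and the basis statement are then formal consequences. Your route keeps everything inside the paper's $\rho_W$ formalism. Your check that the doubled path edges receive $\pm2$ is exactly what Lemma~\ref{lem:a_C} needs to identify the support of $\ab_C$. The citation buys brevity and generality instead.

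Two remarks on the dependence half. First, the step you call the main obstacle is not needed for the theorem. In $\calC_{T_2}$ and $\calC_{T_3}$ each circle edge occurs exactly once in $W_\calC$, so the vector has $\pm1$ entries there and is nonzero whatever happens on the path.

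Second, the ``graph bookkeeping'' is thinner than your sketch suggests. Two independent cycles may form a theta graph, which is none of the four configurations. Handling it needs the parity fact that the three circles of a theta have sign product $+$, so one of them is positive and yields $\calC_{T_1}$.

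You can skip the classification entirely. The columns indexed by a component $K$ lie in $\RR^{V(K)}$, so $\#E(K)>\#V(K)$ already forces dependence by dimension. A connected $K$ with $\#E(K)\le\#V(K)$ that is not a tree, halfedge-tree or pseudo-tree must be unicyclic without halfedges, with a positive circle. So only the $\calC_{T_1}$ kernel vector from \eqref{ali:1} is ever required.
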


\subsection{Toric rings of signed graphs and their divisor-theoretic properties}\label{subsection:4.2}
In this subsection, we compute the divisor class groups and weights, and characterize ($\QQ$-)Gorenstein property of toric rings of signed posets.

Let $P\subset \Phi_{B_d}$ be a signed poset.
If $\calH_P$ has connected components $\calH_{P_1},\ldots,\calH_{P_k}$, then we have $R_P\cong R_{P_1}\otimes_\kk \cdots \otimes_\kk R_{P_k}$.
Thus, in what follows, we always assume that $\calH_P$ is connected.

\begin{lem}\label{lem:a_C}
For a circuit $C\subset \widetilde{P}$ of $M(\widetilde{P})$, let $\calC:=\calH_P(C)$ be the corresponding circuit of $\calH_P$ and let $W_\calC=(e_1,v_1,e_2,\dots,v_{\ell-1},e_\ell)$ be a canonical walk on $\calC$ (if $W_\calC$ is closed, we denote its base vertex by $v_\ell$).
Then we have
    \[
    \ab_C=\sum_{e\in E(W_\calC)}\rho_{W_\calC}(e)\chi_e.
    \]
\end{lem}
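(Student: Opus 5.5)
The plan is to verify directly that the vector $\ab:=\sum_{e\in E(W_\calC)}\rho_{W_\calC}(e)\chi_e$, with the sum taken over the multiset of edges so that repeated edges add up, has three properties: it lies in $\ker(A_{\widetilde P})$, its support is exactly $C$, and it is primitive. Since $\ab_C$ is characterized up to sign by these properties, this gives the claim, with the sign fixed by the convention $\rho_{W_\calC}(e_1)=1$.

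\emph{Step 1: $A_{\widetilde P}\ab=\mathbf 0$.} We have $A_{\widetilde P}\ab=\sum_{i=1}^{\ell}\rho_W(e_i)\,e_i$ with $W=W_\calC$. Expanding each column as $e_i=\sum_{v}\tau(e_i,v)\chi_v$ over its endpoints turns this into a sum over vertex--edge incidences of the walk, one term per occurrence. Group the incidences as follows.
\begin{itemize}
\item An ordinary edge $e_i$ in the walk has endpoints $v_{i-1}$ and $v_i$. Its incidence at $v_i$ is paired with the incidence of $e_{i+1}$ at $v_i$, and these two cancel by \eqref{eq:1}.
\item For types $\calC_{T_1}$ and $\calC_{T_2}$ the walk is closed. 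In type $\calC_{T_1}$ the walk is positive because the circle is positive. In type $\calC_{T_2}$ it is positive because each negative circle is traversed once and each path edge twice. In both cases the incidences of $e_\ell$ and $e_1$ at the base vertex $v_\ell$ cancel by \eqref{ali:1}.
\item For types $\calC_{T_3}$ and $\calC_{T_4}$ the first and last edges are halfedges. Each has only the single incidence at $v_1$ or $v_{\ell-1}$, and that incidence is already paired.
\end{itemize}
Every incidence term is therefore cancelled exactly once, so $\ab\in\ker(A_{\widetilde P})$. The only bookkeeping needed is that each edge occurrence contributes exactly its incidences to consecutive pairs, which follows from the definition of a walk and the explicit shape of the canonical walks.

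\emph{Step 2: support and primitivity.} This is the step I expect to be the main obstacle. Edges on the connecting paths in types $\calC_{T_2}$ and $\calC_{T_3}$ are traversed twice, so a priori their two contributions could cancel. Rather than computing $\rho_W$ along the return trip, I would argue matroidally.
\begin{itemize}
\item $\ab$ is supported on $C$.
\item $\ab$ is nonzero: some edge is traversed exactly once, namely a circle edge in types $\calC_{T_1}$--$\calC_{T_3}$ or any edge in type $\calC_{T_4}$, so its coefficient is $\pm1$.
\item If some edge of $C$ had coefficient $0$, then $\ab$ would be a nontrivial linear dependence among the vectors of a proper subset of $C$. That subset is independent in $M(\widetilde P)$ because $C$ is a circuit (Theorem~\ref{thm:signedindependence}), a contradiction. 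Hence $\operatorname{supp}(\ab)=C$.
\end{itemize}
The nonzero entries of $\ab$ are $\pm1$ or $\pm2$, and at least one is $\pm1$, so $\ab$ is primitive. (In fact the doubly traversed edges must receive $\pm2$.)

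\emph{Step 3: conclusion.} The kernel of $A_{\widetilde P}$ restricted to the coordinates in $C$ is one-dimensional, since $C$ is a circuit. A primitive integer vector in this line with full support on $C$ is therefore $\pm\ab_C$. Choosing the sign of $\ab_C$ to agree with $\rho_{W_\calC}(e_1)=1$ gives the stated formula.
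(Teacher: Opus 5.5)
Your proof is correct and follows the paper's argument. Both show that the candidate vector is supported in $C$, is primitive, and lies in $\ker(A_{\widetilde P})$ via the pairwise cancellations \eqref{eq:1} and \eqref{ali:1} at each vertex of the canonical walk, and then conclude by uniqueness of $\ab_C$ up to sign. Your Step~2 is more careful than the paper, which justifies primitivity only by noting that some $\rho_{W_\calC}(e)=\pm1$; your observation that some edge is traversed exactly once, together with the circuit-minimality argument for full support, is what actually makes that step rigorous.
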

\begin{proof}
    Let $\bb:=\sum_{e\in E(W_\calC)}\rho_{W_\calC}(e)\chi_e$.
    We have $\bb(e)=0$ for $e\in E(\Gamma)\setminus C$.
    Moreover, since there exists an edge $e\in E(W_\calC)$ with $\rho_{W_\calC}(e)=1$ or $-1$, the vector $\bb$ is primitive. 
    Thus, it is enough to show that $\bb\in \ker(A_{\widetilde{P}})$, that is, $(A_{\widetilde{P}}\bb)(v)=\left(\sum_{e\in E(W_\calC)}\rho_{W_\calC}(e)e\right)(v)=0$ for any $v\in V(\Gamma_P)$.
    Clearly, we have 
    \[
    \left(\sum_{e\in E(W_\calC)}\rho_{W_\calC}(e)e\right)(v)=0
    \]if $v\notin e$ for any $e\in E(W_\calC)$.
    
    Assume that $v\in e$ for some $e\in E(W_\calC)$.
    If $\calC$ is a circuit of the form $\calC_{T_3}$ or $\calC_{T_4}$, then $v\in V(W_\calC)$.
    Moreover, if $\calC$ is a circuit of the form $\calC_{T_1}$ or $\calC_{T_2}$, then $W_\calC$ is a positive closed walk.
    Therefore, it follows from (\ref{ali:1}) that
    \begin{align}\label{alig:4.3}
    \left(\sum_{e\in E(W_\calC)}\rho_{W_\calC}(e)e\right)(v)&=\sum_{\substack{e\in E(W_\calC) \\ v\in e}}\tau(e,v)\rho_{W_\calC}(e) \\
    &=\sum_{\substack{j\in [\ell]\\ v=v_j}}(\tau(e_j,v_j)\rho_{W_\calC}(e_j)+\tau(e_{j+1},v_j)\rho_{W_\calC}(e_{j+1}))=0 \notag
    \end{align}
    
\end{proof}

In what follows, we consider the following three cases separately and compute the divisor class group of $R_P$ in each case:
\begin{itemize}
    \item[Case~1;] $\calH_P$ has no signed pseudo-trees and no signed halfedges.
    \item[Case~2;] $\calH_P$ has a signed halfedge.
    \item[Case~3;] $\calH_P$ has a signed pseudo-tree but no signed halfedges.
\end{itemize}
For each of the three cases above, we fix a maximal signed pseudo-forest $\calH_P(\calF)$, where $\calF\subset E(\Gamma_P)$, as follows:
\begin{equation}\label{fixpf}
    \calH_P(\calF)\text{ is }
    \begin{cases}
        \text{a signed tree in Case~1,}\\
        \text{a signed halfedge-tree in Case~2,}\\
        \text{a signed pseudo-tree in Case~3.}
    \end{cases}
\end{equation}
In the last case, $\calH_P(\calF)$ contains a unique negative circle, so we denote its edge set by $N_\calF\subset \calF$.

Let $\epsilon_1,\ldots,\epsilon_t$ be the edges in $E(\Gamma_P)\setminus \calF$.
Note that $t=\#E(\Gamma_P)-d+1$ (resp. $t=\#E(\Gamma_P)-d$) in Case~1 (resp. in Cases~2 or 3).
For each $i=1,\ldots,t$, the subgraph $\calH_P(\calF\cup \{\epsilon_i\})$ has a unique circuit containing $\epsilon_i$.
We denote it by $\calC_i$ and call it the \textit{fundamental circuit} with respect to $\calF$.
Let $W_{\calC_i}$ be a canonical walk on $\calC_i$.

We set
\[
\calG_P:=\begin{cases}
            \ZZ^t &\text{ in Cases~1 or 2}, \\
            \ZZ^t\oplus \ZZ/2\ZZ &\text{ in Case~3}.
        \end{cases}
\]
\begin{prop}\label{prop:CL(R_P)}
Work with the above notation.
Then we have $\Cl(R_P)\cong \calG_P$.
\end{prop}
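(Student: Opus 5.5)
The plan is to use the description $\Cl(R_P)\cong \ZZ^n/\im(A^T_{\widetilde{P}})\cong \ZZ^{n-r}\oplus \ZZ/s_1\ZZ\oplus\cdots\oplus\ZZ/s_r\ZZ$ from Section~\ref{sec:pre}. Here $n=\#E(\Gamma_P)$, $A_{\widetilde{P}}=A_{\calH_P}$, and $r=\rank A_{\calH_P}$. I also use $s_1\cdots s_r=g_r(A_{\calH_P})$. So two things must be computed: the rank $r$, which gives the free part, and the gcd $g_r$ of the maximal minors, which gives the torsion part.

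\emph{Rank.} By Theorem~\ref{thm:signedindependence}(i), $r$ is the number of edges of a maximal signed pseudo-forest, and $\calF$ in (\ref{fixpf}) is one. In Case~1 it is a spanning signed tree, so $\#\calF=d-1$. In Cases~2 and~3 it is a spanning halfedge-tree or pseudo-tree, so $\#\calF=d$. Hence $n-r=t$ in every case, which matches the counts of $t$ recorded before the proposition. This gives the free part $\ZZ^t$.

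\emph{Torsion.} The nonzero $r\times r$ minors are, up to sign, the minors on column sets $T$ that are bases of $M(\widetilde{P})$, i.e.\ maximal signed pseudo-forests (in Case~1 one must also delete a row). The key step is the following determinant computation for a connected component $K$ of a basis, with a suitable choice of rows:
\begin{itemize}
\item[(a)] if $K$ is a signed tree with one row deleted, or a signed halfedge-tree, the minor is $\pm1$;
\item[(b)] if $K$ is a signed pseudo-tree, the minor is $\pm2$.
\end{itemize}
I would prove both by peeling leaves. A pendant vertex of degree one that is not in the circle gives a row with a single $\pm1$ entry, and Laplace expansion along it removes that vertex and its edge. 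This reduces a halfedge-tree to a single halfedge, which has determinant $\pm1$, and a tree to a single vertex, which has the empty minor $1$. It reduces a pseudo-tree to its negative circle. For a circle of length $k$, expanding along the first row gives $\det=\pm\bigl(\prod_i \tau(f_i,u_{i})\,\cdot\,\text{(diag)}\pm\text{(cyclic product)}\bigr)$, and the two terms are related by the sign of the circle. They therefore cancel for a positive circle, consistent with Lemma~\ref{lem:a_C}, and add up to $\pm2$ for a negative circle. This sign bookkeeping is the step I expect to be the main obstacle. I would handle it by normalizing the orientation via switching at vertices, since switching only changes signs of rows, until all but one edge of the circle is of the form $\chi_u-\chi_v$. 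The remaining computation is then a single $k\times k$ matrix.

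\emph{Conclusion by cases.} In Case~1 the basis $\calF$ is a tree, so by (a) $g_{d-1}=1$. In Case~2 the basis $\calF$ is a halfedge-tree, so again $g_d=1$. In both cases $\Cl(R_P)\cong\ZZ^t$. In Case~3 there are no halfedges, so every component of any maximal pseudo-forest is a tree or a pseudo-tree. A spanning pseudo-forest with $d$ edges on $d$ vertices must have all components pseudo-trees. Thus every maximal minor is $\pm2^{c}$ with $c\ge1$, while $\calF$ gives exactly $\pm2$, so $g_d=2$. Hence exactly one invariant factor equals $2$ and the others equal $1$, and $\Cl(R_P)\cong\ZZ^t\oplus\ZZ/2\ZZ=\calG_P$.
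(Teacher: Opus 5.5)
Your proposal is correct and follows the paper's route: the free rank comes from $\#\calF$, and the torsion from $g_r(A_{\calH_P})$, using the fact that the maximal minor on a basis is $\pm 2^{\pt}$. The paper simply cites this fact from \cite[Lemma~4.3]{bach2024acyclotopes}, whereas you reprove it by leaf-peeling and the circle determinant. Using $s_1\cdots s_r=g_r$ instead of computing $g_i$ for $i<r$ is a harmless streamlining. Your explicit check that every basis in Case~3 has a pseudo-tree component, so that $2\mid g_d$, fills in a step the paper leaves implicit.
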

\begin{proof}
    From Theorem~\ref{thm:signedindependence} and the assumption that $\calH_P$ is connected, we have $\rank A_{\calH_P}=\#\calF=d-1$ (resp. $\rank A_{\calH_P}=\#\calF=d$) in Case~1 (resp. in Cases~2 or 3).
    It is known in \cite[Lemma~4.3]{bach2024acyclotopes} that for an independent subset $F\subset \widetilde{P}$, we have $g_{\rk(F)}(A_F)=2^{\pt(F)}$, where $\pt(F)$ denotes the number of connected signed pseudo-tree components of $\calH_P(F)$.
    In particular, since $\pt(\calF)=0$ (resp. $\pt(\calF)=1$) in Cases~1 or 2 (resp. in Case~3), we have $g_r(A_{\calH_P})=1$ (resp. $g_r(A_{\calH_P})=2$), where $r:=\rank A_{\calH_P}$.
    Moreover, we can see that $g_i(A_{\calH_P})=1$ if $i<r$ since there is a subset $F\subset \widetilde{P}$ with $\#F=i$ such that $\calH_P(F)$ is a signed tree.
    Therefore, we have $s_1=\cdots =s_{r-1}=1$ and $s_r=1$ (resp. $s_r=2$) in Cases~1 or 2 (resp. Case~3), where $s_1,\ldots,s_r$ are the invariant factors of $A_{\widetilde{P}}$.
    Thus, we get $\Cl(R_P)\cong \ZZ^{E(\Gamma_P)}/\im(A^T_{\calH_P}) \cong \calG_P$.
\end{proof}

For $e\in E(\Gamma_P)$, we define $b_e\in \ZZ^t$ and $\beta_e\in \calG_P$ as follows:
\begin{equation}\label{equ:def_weight}
b_e:=\displaystyle \sum_{i\in [t]}\sum_{\substack{f\in E(W_{\calC_i})\\ f=e}}\rho_{W_{\calC_i}}(f)\chi_i
 \quad \text{ and } \quad
\beta_e:=\begin{cases}
    b_e &\text{ in Cases~1 or 2},\vspace{0.2cm}\\
    b_e\oplus\bar{1} &\text{ in Case~3 if $e\in N_\calF$}, \vspace{0.2cm}\\ 
    b_e\oplus\bar{0} &\text{ in Case~3 if $e\notin N_\calF$. }
\end{cases}
\end{equation}
Note that the choice of the pseudo-forest $\calF$ implies that the edge $\epsilon_i$ appears exactly once in $E(W_{\calC_i})$ for each $i\in [t]$.
Moreover, $\epsilon_i$ does not appear in $E(W_{\calC_j})$ for $j\neq i$ and $\calN_\calF$.
Therefore, we have $\beta_{\epsilon_i}=\rho_{W_{\calC_i}}(\epsilon_i)\chi_i\oplus \bar{0}$.

\begin{thm}\label{thm:weightR_P}
    Work with the same notation as above.
    Let $\phi : \ZZ^{E(\Gamma_P)}\to \calG_P$ be the morphism induced by $\chi_e\mapsto \beta_e$.
    Then 
     \begin{equation}\label{cl_seq_weight}
 0 \longrightarrow \im(A^T_{\calH_P}) \lhook\joinrel\longrightarrow \ZZ^{E(\Gamma_P)} \xlongrightarrow{\phi} \calG_P \longrightarrow 0
\end{equation}
is an exact sequence, and hence $\beta_e$'s are weights of $R_P$.
\end{thm}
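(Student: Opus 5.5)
Three things need to be shown: $\phi$ is surjective, $\im(A^T_{\calH_P})\subseteq\ker\phi$, and $\ker\phi\subseteq\im(A^T_{\calH_P})$.

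\emph{Surjectivity.} As noted after (\ref{equ:def_weight}), $\beta_{\epsilon_i}=\pm\chi_i\oplus\bar 0$. Hence the free part $\ZZ^t$ lies in $\im\phi$, and in Cases~1 and~2 surjectivity is immediate. In Case~3 we also need an element with nonzero torsion component. Take any $e\in N_\calF$; then $\beta_e=b_e\oplus\bar 1$. Subtracting the appropriate combination of the $\beta_{\epsilon_i}$ gives $0\oplus\bar 1$, so $\phi$ is onto.

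\emph{Composition is zero.} It suffices to show $\phi(A^T_{\calH_P}\chi_v)=0$ for every vertex $v$, that is, $\sum_{e\ni v}\tau(e,v)\beta_e=0$.
\begin{itemize}
\item \textbf{Free part.} The $i$-th coordinate of $\sum_{e\ni v}\tau(e,v)b_e$ is $\sum_{f\in E(W_{\calC_i}),\,v\in f}\tau(f,v)\rho_{W_{\calC_i}}(f)$, counted with multiplicity along the walk. By Lemma~\ref{lem:a_C}, this is the $v$-coordinate of $A_{\widetilde P}\ab_{C_i}$, which is zero. Equivalently, $b_e(i)=\ab_{C_i}(e)$, and the identity reads $\langle A^T\chi_v,\ab_{C_i}\rangle=\langle\chi_v,A\ab_{C_i}\rangle=0$.
\item \textbf{Torsion part (Case~3).} We need $\sum_{e\ni v,\,e\in N_\calF}\tau(e,v)\equiv 0\pmod 2$. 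A vertex of the negative circle meets exactly two edges of $N_\calF$, and a vertex off the circle meets none; in both cases the sum is even.
\end{itemize}
So $\phi\circ A^T_{\calH_P}=0$.

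\emph{Exactness in the middle.} I will compare indices rather than argue directly. By Proposition~\ref{prop:CL(R_P)}, $\ZZ^{E}/\im(A^T)\cong\calG_P$, so $\phi$ induces a surjection $\overline{\phi}:\ZZ^E/\im(A^T)\to\calG_P$ between groups isomorphic to $\calG_P$. The group $\calG_P$ is finitely generated, hence Hopfian, so any surjective endomorphism of it is an isomorphism. It follows that $\overline{\phi}$ is an isomorphism and $\ker\phi=\im(A^T)$.

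\emph{Main obstacle.} The delicate step is the bookkeeping in the free part of the second step. The vector $b_e$ counts the occurrences of $e$ in the canonical walk with signs $\rho$, and a canonical walk can traverse the same edge twice; this happens on the connecting paths in types $\calC_{T_2}$ and $\calC_{T_3}$. I must check that summing $\rho$ over repeated occurrences is exactly what Lemma~\ref{lem:a_C} asserts for $\ab_C$, i.e.\ that $\ab_C(e)$ is the multiset sum, possibly $\pm2$. I will read the lemma's formula $\sum_{e\in E(W)}\rho(e)\chi_e$ over the multiset $E(W)$, which makes the identity $b_e(i)=\ab_{C_i}(e)$ exact.
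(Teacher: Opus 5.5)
Your proposal is correct and follows essentially the same route as the paper. You use Proposition~\ref{prop:CL(R_P)} to reduce to checking $\im(A^T_{\calH_P})\subset\ker\phi$ and surjectivity, kill $\phi(A^T_{\calH_P}\chi_v)$ via the kernel identity (\ref{alig:4.3}) behind Lemma~\ref{lem:a_C} (free part) and the $0$-or-$2$ incidence count with $N_\calF$ (torsion part), and get surjectivity from $\beta_{\epsilon_i}=\pm\chi_i\oplus\bar 0$ together with some $\beta_e$, $e\in N_\calF$; your Hopfian argument just makes explicit the step the paper compresses into ``it is enough to show''.
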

\begin{proof}
    We already know that $\ZZ^{E(\Gamma_P)}/\im(A^T_{\calH_P})\cong \calG_P$ from Proposition~\ref{prop:CL(R_P)}, so it is enough to show that $\im(A^T_{\calH_P})\subset \ker(\phi)$ and $\im(\phi)=\calG_P$.
    
    First, we prove that $\im(A^T_{\calH_P})\subset \ker(\phi)$.
    For a vertex $v\in V(\Gamma_P)$, we have $A^T_{\calH_P}\chi_v=\sum_{\substack{e\in E(\Gamma_P)\\ v\in e}}\tau(e,v)\chi_e$ and
    \begin{align*}
    \phi(A^T_{\calH_P}\chi_v)&=\sum_{\substack{e\in E(\Gamma_P)\\ v\in e}}\tau(e,v)\beta_e \\
    &=\rbra{\sum_{\substack{e\in E(\Gamma_P)\\ v\in e}}\tau(e,v)\rbra{\sum_{i\in [t]}\sum_{\substack{f\in E(W_{\calC_i})\\ f=e}}\rho_{W_{\calC_i}}(f)\chi_i}}\oplus\sum_{\substack{e\in N_\calF \\ v\in e}}\bar{1}.
    \end{align*}
    Note that in Cases~1 and 2, there is no $\ZZ/2\ZZ$-component.
    
    For $i\in [t]$, it follows from (\ref{alig:4.3}) that
    \begin{align*}
    \rbra{\sum_{\substack{e\in E(\Gamma_P)\\ v\in e}}\tau(e,v)\rbra{\sum_{i\in [t]}\sum_{\substack{f\in E(W_{\calC_i})\\ f=e}}\rho_{W_{\calC_i}}(f)\chi_i}}(i)&=\sum_{\substack{e\in E(\Gamma_P)\\ v\in e}}\tau(e,v)\rbra{\sum_{\substack{f\in E(W_{\calC_i})\\ f=e}}\rho_{W_{\calC_i}}(f)} \\
    &=\sum_{\substack{e\in E(W_{\calC_i}) \\ v\in e}}\tau(e,v)\rho_{W_{\calC_i}}(e)=0.
    \end{align*}
    Moreover, since $\calH_P(N_\calF)$ is a circle, the vertex $v$ is incident to either no edge in $N_\calF$ or exactly two edges in $N_\calF$.
    Thus, we have $\sum_{\substack{e\in N_\calF \\ v\in e}}\bar{1}=\bar{0}$.
    Therefore, we get $\phi(A^T_{\calH_P}\chi_v)={\bf 0}$, which implies $\im(A^T_{\calH_P})\subset \ker(\phi)$.

    Next, we show that $\im(\phi)=\calG_P$.
We know that $\beta_{\epsilon_i}=\rho_{W_{\calC_i}}(\epsilon_i)\chi_i\oplus \bar{0}$ for each $i\in [t]$.
Moreover, in Case~3, the $\ZZ/2\ZZ$-component of $\beta_{\epsilon}$ is $\bar{1}$ for every edge $\epsilon\in E_{\calF}$.
This shows that $\beta_{\epsilon_1},\ldots,\beta_{\epsilon_t}$ are generators of $\calG_P$ in Cases~1 or 2$,$ and that $\beta_{\epsilon_1},\ldots,\beta_{\epsilon_t},\beta_{\epsilon}$ generate $\calG_P$ in Case~3. Hence, $\im(\phi)=\calG_P$.
\end{proof}

\begin{ex}\label{ex:weight}
   (1) Let $P_1\subset \Phi_{B_4}$ be a signed poset with
    \begin{multline*}
    \widetilde{P_1}=\{e_1:=\chi_1+\chi_2,\; e_2:=\chi_2+\chi_3,\; e_3:=\chi_3+\chi_4,\;\\ e_4:=\chi_1+\chi_4,\; e_5:=\chi_2+\chi_4,\; e_6:=\chi_1,\; e_7:=\chi_3\},
    \end{multline*}
    see Figure~\ref{ex1}.
    \begin{figure}[H]
        \centering
        \scalebox{0.8}{
            \begin{tikzpicture}[line width=0.05cm]
\def\r{1.5};
\coordinate (N0) at ($(0,0)+({\r*cos(0)},{\r*sin(0)})$); 
\coordinate (N1) at ($(0,0)+({\r*cos(90)},{\r*sin(90)})$); 
\coordinate (N2) at ($(0,0)+({\r*cos(180)},{\r*sin(180)})$); 
\coordinate (N3) at ($(0,0)+({\r*cos(270)},{\r*sin(270)})$); 

\coordinate (P1) at ($(N0)+(1.3,0)$);
\coordinate (P2) at ($(W0)+(-1.3,0)$);

\draw[
  postaction={
    decorate,
    decoration={
      markings,
      mark=at position 0.75 with {\arrow{>}},
      mark=at position 0.25 with {\arrow{<}}
    }
  }
] (N0) -- (N1);
\draw[
  postaction={
    decorate,
    decoration={
      markings,
      mark=at position 0.75 with {\arrow{>}},
      mark=at position 0.25 with {\arrow{<}}
    }
  }
] (N1) -- (N2);
\draw[
  postaction={
    decorate,
    decoration={
      markings,
      mark=at position 0.75 with {\arrow{>}},
      mark=at position 0.25 with {\arrow{<}}
    }
  }
] (N2) -- (N3);
\draw[
  postaction={
    decorate,
    decoration={
      markings,
      mark=at position 0.75 with {\arrow{>}},
      mark=at position 0.25 with {\arrow{<}}
    }
  }
] (N3) -- (N0);
\draw[
  postaction={
    decorate,
    decoration={
      markings,
      mark=at position 0.75 with {\arrow{>}},
      mark=at position 0.25 with {\arrow{<}}
    }
  }
] (N1) -- (N3);
\draw[
  postaction={
    decorate,
    decoration={
      markings,
      mark=at position 0.5 with {\arrow{<}}
    }
  }
] (N0) -- ($(N0)+(1,0)$);
\draw[
  postaction={
    decorate,
    decoration={
      markings,
      mark=at position 0.5 with {\arrow{<}}
    }
  }
] (N2) -- ($(N2)+(-1,0)$);
\draw[dotted] ($(N0)+(1.2,0)$) -- ($(N0)+(1.5,0)$);
\draw[dotted] ($(N2)+(-1.2,0)$) -- ($(N2)+(-1.5,0)$);

\draw [line width=0.05cm, fill=white] (N0) circle [radius=0.15] node[above right] {\Large $1$};
\draw [line width=0.05cm, fill=white] (N1) circle [radius=0.15] node[] at ($(N1)+(0,0.4)$) {\Large $2$};
\draw [line width=0.05cm, fill=white] (N2) circle [radius=0.15] node[above left] {\Large $3$};
\draw [line width=0.05cm, fill=white] (N3) circle [radius=0.15] node[below left] {\Large $4$};

\node[] at ($(0,0)+({\r*cos(45)},{\r*sin(45)})$) {\Large $e_1$};
\node[] at ($(0,0)+({\r*cos(135)},{\r*sin(135)})$) {\Large $e_2$};
\node[] at ($(0,0)+({\r*cos(225)},{\r*sin(225)})$) {\Large $e_3$};
\node[] at ($(0,0)+({\r*cos(315)},{\r*sin(315)})$) {\Large $e_4$};

\node[] at (0.4,0) {\Large $e_5$};
\node[] at ($(N0)+(0.65,-0.4)$) {\Large $e_6$};
\node[] at ($(N2)+(-0.65,-0.4)$) {\Large $e_7$};
            \end{tikzpicture}}
        \caption{The Hasse diagram $\calH_{P_1}$}
        \label{ex1}
\end{figure}
    
    We can see that $\Cl(R_{P_1})\cong \ZZ^3$.
    Take the maximal signed pseudo-forest $\calF:=\{e_1,e_3,e_4,e_7\}$ and let $\epsilon_1:=e_6$, $\epsilon_2:=e_2$ and $\epsilon_3:=e_5$.
    Then 
    \begin{align*}
        W_{\calC_1}&:=(e_6,1,e_4,4,e_3,3,e_7), \\
        W_{\calC_2}&:=(e_2,2,e_1,1,e_4,4,e_3) \text{ and } \\
        W_{\calC_3}&:=(e_7,3,e_3,4,e_5,2,e_1,1,e_4,4,e_3,3,e_7).
    \end{align*}
    are canonical walks on fundamental circuits $\calC_1$, $\calC_2$ and $\calC_3$, respectively.
    Then we have 
    \begin{align*}
        \beta_{e_1}&=(0,-1,-1), &\beta_{e_2}&=(0,1,0), &\beta_{e_3}&=(1,-1,-2) , &\;&\;\\
        \beta_{e_4}&=(-1,1,1), &\beta_{e_5}&=(0,0,1), &\beta_{e_6}&=(1,0,0), &\beta_{e_7}&=(-1,0,2).
    \end{align*}

    \medskip

    (2) Let $P_2\subset \Phi_{B_4}$ be a signed poset with
    \begin{multline*}
    \widetilde{P_2}:=\{e_1:=\chi_1+\chi_2,\; e_2:=\chi_2+\chi_3,\; e_3:=\chi_3+\chi_4,\;\\ e_4:=\chi_1+\chi_4,\; e_5:=\chi_1-\chi_4,\; e_6:=\chi_3-\chi_2\},
    \end{multline*}
    see Figure~\ref{ex2}. 
\begin{figure}[H]

        \centering
        \scalebox{0.8}{
            \begin{tikzpicture}[line width=0.05cm]
\def\r{2};
\coordinate (N0) at ($(0,0)+({\r*cos(45)},{\r*sin(45)})$); 
\coordinate (N1) at ($(0,0)+({\r*cos(135)},{\r*sin(135)})$); 
\coordinate (N2) at ($(0,0)+({\r*cos(225)},{\r*sin(225)})$); 
\coordinate (N3) at ($(0,0)+({\r*cos(315)},{\r*sin(315)})$);

\draw[
  postaction={
    decorate,
    decoration={
      markings,
      mark=at position 0.75 with {\arrow{>}},
      mark=at position 0.25 with {\arrow{<}}
    }
  }
]  (N0) to [out=-30,in=30] (N3);
\draw[
  postaction={
    decorate,
    decoration={
      markings,
      mark=at position 0.75 with {\arrow{>}},
      mark=at position 0.25 with {\arrow{<}}
    }
  }
]  (N1) to [out=210,in=150] (N2);

\draw[
  postaction={
    decorate,
    decoration={
      markings,
      mark=at position 0.75 with {\arrow{>}},
      mark=at position 0.25 with {\arrow{<}}
    }
  }
] (N0) -- (N1);
\draw[
  postaction={
    decorate,
    decoration={
      markings,
      mark=at position 0.75 with {\arrow{>}},
      mark=at position 0.25 with {\arrow{>}}
    }
  }
] (N1) -- (N2);
\draw[
  postaction={
    decorate,
    decoration={
      markings,
      mark=at position 0.75 with {\arrow{>}},
      mark=at position 0.25 with {\arrow{<}}
    }
  }
] (N2) -- (N3);
\draw[
  postaction={
    decorate,
    decoration={
      markings,
      mark=at position 0.75 with {\arrow{>}},
      mark=at position 0.25 with {\arrow{>}}
    }
  }
] (N3) -- (N0);

\draw [line width=0.05cm, fill=white] (N0) circle [radius=0.15] node[above right] {\Large $1$};
\draw [line width=0.05cm, fill=white] (N1) circle [radius=0.15] node[above left]  {\Large $2$};
\draw [line width=0.05cm, fill=white] (N2) circle [radius=0.15] node[below left] {\Large $3$};
\draw [line width=0.05cm, fill=white] (N3) circle [radius=0.15] node[below right] {\Large $4$};

\node[] at ($(0,-0.3)+({\r*cos(90)},{\r*sin(90)})$) {\Large $e_1$};
\node[] at ($(-0.5,0)+({\r*cos(180)},{\r*sin(180)})$) {\Large $e_2$};
\node[] at ($(0,0.3)+({\r*cos(270)},{\r*sin(270)})$) {\Large $e_3$};
\node[] at ($(0.5,0)+({\r*cos(0)},{\r*sin(0)})$) {\Large $e_4$};

\node[] at (1,0) {\Large $e_5$};
\node[] at (-1,0) {\Large $e_6$};
            \end{tikzpicture}}
        \caption{The Hasse diagram $\calH_{P_2}$}
        \label{ex2}
\end{figure}
    We can see that $\Cl(R_{P_2})\cong \ZZ^2\oplus \ZZ/2\ZZ$.
    Take the connected signed pseudo-forest $\calF:=\{e_1,e_2,e_3,e_5\}$, which has the negative circle with the edge set $N_\calF=\calF$.
    Let $\epsilon_1:=e_4$ and $\epsilon_2:=e_6$.
    Then 
    \begin{equation}\label{equ:twowalks}
    \begin{split}
        W_{\calC_1}&:=(e_4,1,e_1,2,e_2,3,e_3), \\
        W_{\calC_2}&:=(e_6,3,e_3,4,e_5,1,e_1).
    \end{split}
    \end{equation}
    are canonical walks on fundamental circuits $\calC_1$ and $\calC_2$, respectively.
    Then we have 
    \begin{align*}
        \beta_{e_1}&=(-1,1)\oplus \bar{1}, &\beta_{e_2}&=(1,0)\oplus \bar{1}, &\beta_{e_3}&=(-1,-1)\oplus\bar{1},\\
        \beta_{e_4}&=(1,0)\oplus \bar{0}, &\beta_{e_5}&=(0,-1)\oplus \bar{1}, &\beta_{e_6}&=(0,1)\oplus \bar{0}.
    \end{align*}
   
\end{ex}

\bigskip

Using Theorem~\ref{thm:weightR_P}, we can give a criterion for $R_P$ to be ($\QQ$-)Gorenstein.

For a walk $W$ on $\calH_P$, we define the following two multisubsets of $E(W)$:
\[
W^+:=\{e\in E(W) : \rho_W(e)=1\} \quad \text{ and }\quad W^-:=\{e\in E(W) : \rho_W(e)=-1\}.
\]
\begin{thm}\label{thm:Gorenstein}
    Work with the above notation.
    Consider the following two conditions:
    \begin{itemize}
        \item[(i)] Any canonical walk $W$ on $\calH_P$ satisfies $\#W^+=\#W^-$.
        \item[(ii)] $\calH_P$ has no negative circles of odd length.
    \end{itemize}
    Then $R_P$ is $\QQ$-Gorenstein if and only if condition (i) holds.
    Moreover, $R_P$ is Gorenstein if and only if conditions (i) and (ii) hold.
\end{thm}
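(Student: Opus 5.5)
The canonical module of $R_P$ is $\calT(\chi_{E(\Gamma_P)})$, so its class in $\Cl(R_P)$ is $\phi(\chi_{E(\Gamma_P)})=\sum_{e\in E(\Gamma_P)}\beta_e$ by Theorem~\ref{thm:weightR_P}. Hence $R_P$ is Gorenstein if and only if this sum vanishes in $\calG_P$, and $\QQ$-Gorenstein if and only if it is torsion, i.e. its free part $\sum_e b_e\in\ZZ^t$ vanishes. The plan is to compute both components explicitly from the definition (\ref{equ:def_weight}).

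For the free part, the $i$-th coordinate of $\sum_e b_e$ is
\[
\sum_{e}\sum_{\substack{f\in E(W_{\calC_i})\\ f=e}}\rho_{W_{\calC_i}}(f)=\#W_{\calC_i}^+-\#W_{\calC_i}^-,
\]
with multiplicities. So $R_P$ is $\QQ$-Gorenstein if and only if $\#W^+=\#W^-$ for the canonical walks of all fundamental circuits with respect to $\calF$. This is implied by (i).

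For the converse, I must show that the condition for fundamental circuits forces it for every canonical walk. By Lemma~\ref{lem:a_C}, for any circuit $C$ we have $\ab_C=\sum_{e}\rho_{W_\calC}(e)\chi_e$, so $\#W_\calC^+-\#W_\calC^-=\langle \ab_C,\chi_{E(\Gamma_P)}\rangle$. Since every $\ab_C$ lies in $\ker(A_{\widetilde{P}})$, which is spanned by the fundamental vectors $\ab_{C_i}$ (it is a rational combination of them, because the fundamental circuit vectors form a basis of $\ker\otimes\QQ$), vanishing of the pairing on all $\ab_{C_i}$ gives vanishing on every $\ab_C$. This proves (i).

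Equivalently, and more invariantly, $\calT(\chi_{E})$ is $\QQ$-Gorenstein if and only if $\chi_E\in\im(A^T)\otimes\QQ=\ker(A)^\perp$, and $\ker(A)\otimes\QQ$ is spanned by circuit vectors. I will present the argument this way, since it avoids the dependence on $\calF$.

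For Gorensteinness, Cases~1 and~2 have no torsion, so Gorenstein coincides with $\QQ$-Gorenstein. In these cases I also need (ii) to hold automatically. In Case~1 there are no negative circles at all. In Case~2, any negative circle of odd length together with the halfedge gives a $\calC_{T_3}$ circuit, and I expect the count $\#W^+-\#W^-$ to be odd for it, so (i) would already fail. A parity check supports this: along a walk, $\rho$ alternates according to the $\tau$ signs, and the length of $W_{\calC_{T_3}}$ is $2m+2+k$, which is odd when $k$ is odd. Hence $\#W^+ + \#W^-$ is odd and cannot split evenly, so (i) implies (ii) here.

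Case~3 is the main obstacle. Assuming (i), the class of the canonical module is $0\oplus\overline{\#N_\calF}$, so $R_P$ is Gorenstein if and only if the fixed negative circle $N_\calF$ has even length. I must then show this is equivalent to all negative circles having even length, given (i). For another negative circle $N'$, I would take the $\calC_{T_2}$ circuit formed by $N_\calF$, $N'$, and a connecting path (or the theta-type configuration when they share edges). Its canonical walk has length $k+l+2m$, so by the same parity argument (i) forces $k+l$ to be even, i.e. $k\equiv l \pmod 2$. The delicate point is when the two circles intersect: then they do not form a $\calC_{T_2}$ circuit directly, and I would instead use the positive circle contained in their union, together with (i) applied to that $\calC_{T_1}$ circuit, to transfer the parity.
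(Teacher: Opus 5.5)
Your $\QQ$-Gorenstein argument is correct and somewhat more invariant than the paper's. You use that the fundamental circuit vectors $\ab_{C_i}$ form a $\QQ$-basis of $\ker(A_{\widetilde{P}})$. Hence $\langle \ab_C,\chi_{E(\Gamma_P)}\rangle=\#W_\calC^+-\#W_\calC^-$ vanishes for every circuit once it vanishes for the fundamental ones. The paper instead re-chooses $\calF$ so that a given circuit becomes fundamental. Cases~1 and~2 of the Gorenstein part are also fine; they coincide with the paper's parity count on a $\calC_{T_3}$ walk.

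The gap is in Case~3, in the direction Gorenstein $\Rightarrow$ (ii). Because you keep $\calF$ fixed, you must show that under (i) every negative circle $N'$ has the same length parity as $N_\calF$. You only settle this when $N'$ and $N_\calF$ meet in at most one vertex, via $\calC_{T_2}$.

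Your hint for the intersecting case, ``the positive circle contained in their union'', works only when $N_\calF\cup N'$ is a theta graph, i.e.\ when the two circles share a single path. Then $N_\calF\triangle N'$ is a positive circle, and (i) gives $\#N_\calF+\#N'-2\#(N_\calF\cap N')\equiv 0 \pmod 2$. In general, however, $N_\calF\triangle N'$ splits into several edge-disjoint circles, some of which may be negative. So no single $\calC_{T_1}$ circuit transfers the parity, and an extra argument is needed.

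One option is induction on $\#(E(N)\cup E(N'))$. Take an ear $Q$ of $N'$ attached to $N$ at vertices $a\neq b$. Of the circles $Q\cup N[a,b]$ and $Q\cup N[b,a]$, exactly one is negative, since the three circle signs of a theta multiply to $+$. The other is positive, hence of even length by (i), so the negative one has the parity of $N$. Its edge union with $N'$ is strictly smaller, so the induction proceeds.

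Much simpler, and what the paper does: do not keep $\calF$ fixed. Theorem~\ref{thm:weightR_P} holds for every admissible $\calF$, and Gorensteinness is intrinsic. So given an odd negative circle $\calN$, choose a connected signed pseudo-tree $\calF$ with $N_\calF=\calN$. The torsion component of $\sum_e\beta_e$ is then $\overline{\#\calN}=\bar1\neq 0$, so $R_P$ is not Gorenstein, and no comparison between different negative circles is needed.
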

\begin{proof}
   Let $b_P:=\sum_{e\in E(\Gamma_P)}b_e\in \ZZ^t$ and $\beta_P:=\sum_{e\in E(\calH_P)}\beta_e\in \calG_P$.
   Since $\beta_P$ corresponds to the canonical class in $\Cl(R_P)$, we can see that $R_P$ is $\QQ$-Gorenstein (resp. Gorenstein) if and only if $\beta_P$ is a torsion element in $\calG_P$, that is, $b_P={\bf 0}$ (resp. $\beta_P={\bf 0}$).
   
   For each $i\in [t]$, we have
   \begin{equation}\label{equ:b_P}
   \begin{split}
   b_P(i)&=\rbra{\sum_{e\in E(\Gamma_P)} \sum_{i\in [t]}\sum_{\substack{f\in E(W_{\calC_i})\\ f=e}}\rho_{W_{\calC_i}}(f)\chi_i}(i) =\sum_{e\in E(\Gamma_P)}\sum_{\substack{f\in E(W_{\calC_i})\\ f=e}}\rho_{W_{\calC_i}}(f) \\
   &=\sum_{e\in E(W_{\calC_i})}\rho_{W_{\calC_i}}(e) 
   =\#W^+_{\calC_i}-\#W^-_{\calC_i}
   \end{split}
   \end{equation}
   If condition (i) is satisfied, then $b_e={\bf 0}$ by (\ref{equ:b_P}), that is, $R_P$ is $\QQ$-Gorenstein.
   
   We can see that for any circuit $\calC$ of $\calH_P$, there is a maximal signed pseudo-forest $\calF$ such that $\calC$ is a fundamental circuit with respect to $\calF$.
   Thus, assume that $R_P$ is $\QQ$-Gorenstein, that is, $b_P={\bf 0}$, then $\#W^+_\calC=\#W^-_\calC$ by (\ref{equ:b_P}).
   Therefore, $R_P$ is $\QQ$-Gorenstein if and only if condition (i) holds.

\medskip

   Suppose that conditions (i) and (ii) are satisfied.
   In Cases~1 or 2, the class group $\Cl(R_P)$ has no torsion elements.
   Thus, $\beta_P=b_P={\bf 0}$, and hence $R_P$ is Gorenstein.
   In Case~3, since the circle $\calH_P(N_\calF)$ is a negative cycle of even length, we have
   \[
   \beta_P=b_P\oplus \sum_{e\in N_\calF}\bar{1}=b_P\oplus \bar{0}={\bf 0}.
   \]
   Therefore, conditions (i) and (ii) imply that $R_P$ is Gorenstein.

   Assume that $R_P$ is Gorenstein.
   Then condition (i) holds since $R_P$ is $\QQ$-Gorenstein.
   
   In Case~1, $\calH_P$ has no negative circles of odd length because it has no signed pseudo-trees.
   
   Consider Case~3. If $\calH_P$ has a negative circle $\calN$ of odd length, then $\calH_P$ has a maximal signed pseudo-tree $\calF$ containing $\calN$.
   Thus, the sum of weights induced by $\calF$ is nonzero, contradicting the assumption that $R_P$ is Gorenstein.
   
   In Case~2, suppose that $\calH_P$ has a negative circle $\calN$ of odd length.
   Then $\calH_P$ has a fundamental circuit $\calC_i$ of the form $\calC_{T_3}$ containing $\calN$.
   Take the canonical walk
   \[
   W_{\calC_i}:=(h_{m+1},\mu_m,\ldots,\mu_1,h_1,u_0,f_1,u_1,\ldots,u_{k-1},f_k,u_0,h_1,\mu_1,\ldots,\mu_m,h_{m+1})
   \]
   as in Figure~\ref{fig:circuit3}.
   Then we have
   \[
   b_P(i)=\sum_{e\in E(W_{\calC_i})}\rho_{W_{\calC_i}}(e)=2\sum_{j=1}^{m+1}\rho_{W_{\calC_i}}(h_j)+\sum_{l=1}^k \rho_{W_{\calC_i}}(f_l).
   \]
   Since $\calC$ has an odd length, $\sum_{l=1}^k \rho_{W_{\calC_i}}(f_l)$ is odd, and hence $b_P(i)\neq 0$.
   This contradicts the fact that condition (i) holds.
   Therefore, we conclude that $R_P$ is Gorenstein if and only if conditions (i) and (ii) hold.
\end{proof}

Let $G$ be a simple graph on vertex set $V(G)=[d]$ with edge set $E(G)$.
Then we obtain the signed poset
\[
P_G:=\{\chi_v+\chi_w : \{v,w\}\in E(G)\}\subset \Phi_{B_d}.
\]
It is easy to see that $\widetilde{P_G}=P_G$.
Moreover, the Hasse diagram of $P_G$ is the bidirected graph on the underlying graph $G$ obtained by setting $\tau(e,v)=+$ for every incidence $(e,v)$.
Equivalently, each edge is directed into both of its endpoints, so this is an orientation of the all-negative signed graph on $G$.

We can characterize when $R_{P_G}$ is ($\QQ$-)Gorenstein using Theorem~\ref{thm:Gorenstein} as follows:
\begin{cor}
    Let $G$ be a simple graph.
    Then $R_{P_G}$ is $\QQ$-Gorenstein.
    Moreover, $R_{P_G}$ is Gorenstein if and only if $G$ is bipartite.
\end{cor}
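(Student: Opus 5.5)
We apply Theorem~\ref{thm:Gorenstein} to $P=P_G$. We may assume $G$ is connected, since both properties are compatible with the tensor decomposition into components. Every edge $e=\{v,w\}$ of $\calH_{P_G}$ has $\tau(e,v)=\tau(e,w)=+$, so $\sigma(e)=-1$ for every edge, and there are no halfedges. The canonical walks therefore live on circuits of types $\calC_{T_1}$ and $\calC_{T_2}$. These are, respectively, a positive circle (an even cycle of $G$) and two odd cycles joined by a path.

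For condition (i) we compute $\rho_W$ along any walk $W=(e_1,v_1,\ldots,e_\ell)$. Since all $\tau$ equal $+$, the recursion reads $\rho_W(e_i)=-\rho_W(e_{i-1})$, so $\rho_W(e_i)=(-1)^{i-1}$ and the signs simply alternate along the walk. Hence $\#W^+-\#W^-$ is $0$ when $\ell$ is even and $1$ when $\ell$ is odd, and condition (i) reduces to the canonical walk having even length.

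\begin{itemize}
\item For type $\calC_{T_1}$, the circle is positive with all edges negative, so its length is even.
\item For type $\calC_{T_2}$, the canonical walk traverses two negative circles once each and the connecting path twice. Since all edges are negative, a negative circle has odd length. The total length is $k+l+2m$ with $k,l$ odd, which is even.
\end{itemize}

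Thus condition (i) always holds, and $R_{P_G}$ is $\QQ$-Gorenstein by Theorem~\ref{thm:Gorenstein}.

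For the Gorenstein statement it remains to check condition (ii). A circle is negative exactly when it has an odd number of (all negative) edges. So every negative circle automatically has odd length, and condition (ii) says that $G$ has no odd cycles, i.e.\ $G$ is bipartite. Combined with (i), Theorem~\ref{thm:Gorenstein} gives Gorenstein $\iff$ $G$ bipartite.

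The only point needing care is verifying that the sign-alternation $\rho_W(e_i)=(-1)^{i-1}$ applies to the canonical walks as listed, including the repeated traversal of the path in $\calC_{T_2}$. This is immediate from the recursion, which depends only on consecutive incidences. Nothing else is an obstacle.
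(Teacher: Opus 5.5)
Your proof is correct and follows the paper's argument. You apply Theorem~\ref{thm:Gorenstein}, note that $\rho_W$ alternates along any walk because every incidence is $+$, check that canonical walks on circuits of types $\calC_{T_1}$ and $\calC_{T_2}$ have even length, and identify the negative circles with the odd cycles of $G$. Your explicit length count $k+l+2m$ and the reduction to connected $G$ spell out details the paper leaves implicit.
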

\begin{proof}
    Since $\calH_{P_G}$ has no halfedges, all circuits in $\calH_{P_G}$ are of the forms $\calC_{T_1}$ or $\calC_{T_2}$.
    Moreover, any canonical walk
    \[
    W:=(e_1,v_1,e_2,\dots,v_{\ell-1},e_\ell)
    \]
    has even length and satisfies $\rho_W(e_i)+\rho_W(e_{i+1})=0$ for each $i\in [\ell]$ by its orientation, so we have $\#W^+=\#W^-$.
    Therefore, $R_{P_G}$ is $\QQ$-Gorenstein from Theorem~\ref{thm:Gorenstein}.

    Furthermore, we can see that a circle in $\calH_{P_G}$ is negative if and only if it has an odd length.
    This shows that $R_{P_G}$ is Gorenstein if and only if $G$ has no odd cycles, that is, $G$ is bipartite.
\end{proof}

\subsection{Conic divisorial ideals of toric rings of signed posets}
In this subsection, we describe a region representing conic classes in the divisor class group of the toric ring of a signed poset.
Throughout this subsection, let $P\subset \Phi_{B_d}$ be a singed poset such that $\calH_P$ is connected.
In addition, let $\calF$ be a subset of $E(\Gamma_P)$ such that $\calH_P(\calF)$ is a maximal signed pseudo-forest as in (\ref{fixpf}).
Moreover, let $\epsilon_1,\ldots,\epsilon_t$ be the edges in $E(\Gamma_P)\setminus \calF$ and let $\calC_i$ be the fundamental circuit with respect to $\calF$ for each $i\in [t]$.
Furthermore, for $e\in E(\Gamma_P)$, let $\beta_e\in \calG_P$ be the weight defined as (\ref{equ:def_weight}), and let $\calB:=\{\widetilde{\beta_e}=b_e\in \ZZ^t : e\in E(\Gamma_P)\}$.
\begin{thm}\label{thm:weightconic}
    We have 
    \[
    \calW(\calB)=\bigcap_{\calC\in \fkC(\calH_P)}\set{\zb \in \RR^t : -\#W_\calC^-< \sum_{\epsilon_i\in W_{\calC}^+}\rho_{W_{\calC_i}}(\epsilon_i)\zb(i)-\sum_{\epsilon_j\in W_{\calC}^-}\rho_{W_{\calC_j}}(\epsilon_j)\zb(j)< \#W^+_\calC}.
    \]
\end{thm}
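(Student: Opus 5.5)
The plan is to specialize Theorem~\ref{thm:conicdesc} to $V=\widetilde{P}$. By Theorem~\ref{thm:signedindependence}(ii), the circuits of $M(\widetilde{P})$ are exactly the edge sets of the circuits $\calC\in\fkC(\calH_P)$. By Lemma~\ref{lem:a_C}, the primitive kernel vector is $\ab_C=\sum_{e\in E(W_\calC)}\rho_{W_\calC}(e)\chi_e$. Here $E(W_\calC)$ is a multiset, so this is really a sum with multiplicities. Hence $|\ab_C^+|$ and $|\ab_C^-|$ should equal $\#W_\calC^+$ and $\#W_\calC^-$. The bounds in Theorem~\ref{thm:conicdesc} then become exactly $-\#W_\calC^-$ and $\#W_\calC^+$.

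One point needs care. An edge may be traversed twice in $W_\calC$, namely the path edges in types $\calC_{T_2}$ and $\calC_{T_3}$. I would check, using the recursion for $\rho_W$ along the return path, that such an edge receives the same sign both times. Then the coefficient $\ab_C(e)=\pm2$ has a well-defined sign, and no cancellation occurs between $W^+$ and $W^-$. With this, $|\ab_C^+|=\#W_\calC^+$ and $|\ab_C^-|=\#W_\calC^-$ counted with multiplicity.

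The remaining, and main, step is to compute the linear functional $\ab_C^*$ on $\Cl(R_P)_\RR=\RR^t$ in the coordinates given by the weights $b_e$. By construction, $\ab_C^*(\widetilde{\beta_e})=\ab_C(e)$ for all $e$. The key observation is that the vectors $b_{\epsilon_i}=\rho_{W_{\calC_i}}(\epsilon_i)\chi_i$ form a basis of $\RR^t$ (already noted before Theorem~\ref{thm:weightR_P}). So $\ab_C^*$ is determined by its values on the non-forest edges: $\ab_C^*(\chi_i)=\rho_{W_{\calC_i}}(\epsilon_i)\,\ab_C(\epsilon_i)$, using $\rho^2=1$. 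Writing $\zb=\sum_i \zb(i)\chi_i$ gives
\[
\ab_C^*(\zb)=\sum_{i\in[t]}\rho_{W_{\calC_i}}(\epsilon_i)\,\ab_C(\epsilon_i)\,\zb(i).
\]
Expanding $\ab_C(\epsilon_i)$ as $+1$ times the number of occurrences of $\epsilon_i$ in $W_\calC^+$, minus the number of occurrences in $W_\calC^-$, yields exactly the middle expression of the statement. The sums there are read over the multisets $W_\calC^\pm$ restricted to the edges $\epsilon_i$.

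This identification is only valid if $\ab_C^*$ is a genuine linear functional compatible with all the weights, not just with the $b_{\epsilon_i}$. That is guaranteed by the general discussion preceding Theorem~\ref{thm:conicdesc}, since $\ab_C\in\ker(A_{\widetilde P})\cong\Hom(\Cl(R_P)_{\mathrm{fre}},\ZZ)$ and Theorem~\ref{thm:weightR_P} shows that the $\beta_e$ come from a valid exact sequence. So no separate check on the forest edges is needed.

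Substituting these expressions into Theorem~\ref{thm:conicdesc} gives the claimed description of $\calW(\calB)$. I expect the only delicate point to be the sign-consistency of repeated edges in canonical walks. If that fails, the bounds would instead be $|\ab_C^\pm|$ and would need to be stated with cancellations, so I would verify it first using the relation (\ref{ali:1}) along the walk.
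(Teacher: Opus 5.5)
Your proposal is correct and takes essentially the same route as the paper. Both specialize Theorem~\ref{thm:conicdesc} via Lemma~\ref{lem:a_C} and evaluate $\ab_C^*$ on the basis $b_{\epsilon_i}=\rho_{W_{\calC_i}}(\epsilon_i)\chi_i$ of $\RR^t$. The point you flag, which the paper passes over with ``we can see that'', does hold: the negativity of the circle gives $\tau(f_k,u_0)\rho_W(f_k)=\tau(f_1,u_0)\rho_W(f_1)$, so the recursion for $\rho_W$ assigns each doubly traversed path edge (or halfedge) the same sign both times, and hence $|\ab_C^{\pm}|=\#W_\calC^{\pm}$.
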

\begin{proof}
    Note that $\Cl(R_P)_{\mathrm{fre}}=\ZZ^t$ and $\Hom_\ZZ(\ZZ^t,\ZZ)\cong \ker(A_{\widetilde{P}})$.
    For a circuit $\calC\in \fkC(\calH_P)$, let $C\subset E(\Gamma_P)=\widetilde{P}$ be the edge set of $\calC$.
    Then $C$ is a circuit of $M(\widetilde{P})$ and $\ab_C=\sum_{e\in E(W_\calC)}\rho_{W_\calC}(e)\chi_e$ by Lemma~\ref{lem:a_C}.
    In particular, we have $\ab_C^*(b_e)=\ab_C(e)$ for $e\in E(\Gamma_P)$.
    
    Since $b_{\epsilon_i}=\rho_{W_{\calC_i}}(\epsilon_i)\chi_i$, for $\zb\in \ZZ^t$, we can write $\zb=\sum_{i=1}^t\zb(i)\chi_i=\sum_{i=1}^t\zb(i)\rho_{W_{\calC_i}}(\epsilon_i)b_{\epsilon_i}$ and have
    \begin{align*}
        \ab^*_C(\zb)&=\sum_{i=1}^t\rho_{W_{\calC_i}}(\epsilon_i)\ab^*_C(b_{\epsilon_i})\zb(i)=\sum_{i=1}^t\rho_{W_{\calC_i}}(\epsilon_i)\ab_C(\epsilon_i)\zb(i) \\
        &=\sum_{i=1}^t\rho_{W_{\calC_i}}(\epsilon_i)\rbra{\sum_{\substack{e\in E(W_\calC) \\ e=\epsilon_i}}\rho_{W_\calC}(e)}\zb(i)=\sum_{i=1}^t\rho_{W_{\calC_i}}(\epsilon_i)\rbra{\sum_{\substack{e\in W^+_\calC \\ e=\epsilon_i}}\zb(i) - \sum_{\substack{e\in W^-_\calC \\ e=\epsilon_i}}\zb(i)} \\
        &=\sum_{\epsilon_i\in W_{\calC}^+}\rho_{W_{\calC_i}}(\epsilon_i)\zb(i)-\sum_{\epsilon_j\in W_{\calC}^-}\rho_{W_{\calC_j}}(\epsilon_j)\zb(j).
    \end{align*}
    We can see that $|\ab^+_C|=\# W^+_{\calC}$ and $|\ab^-_C|=\# W^-_{\calC}$.
    Therefore, from Theorem~\ref{thm:conicdesc}, we get the desired result.
\end{proof}

\begin{ex}
    We use the poset $P_2$ as in Example~\ref{ex:weight} (2).
    Then $\calH_{P_2}$ has four circuits; $\calC_1$ and $\calC_2$ with their canonical walks defined in (\ref{equ:twowalks}), and two exceptional circuits $\calC_3$ and $\calC_4$ with their canonical walks
    \begin{align*}
        W_{\calC_3}&:=(e_2,3,e_6,2,e_1,1,e_5,4,e_4,1,e_1), \\
        W_{\calC_4}&:=(e_2,2,e_6,3,e_3,4,e_5,1,e_4,4,e_3)
    \end{align*}
    We have
    \begin{align*}
        W_{\calC_1}^+&=\{e_4,e_2\}, &W_{\calC_2}^+&=\{e_6,e_3\}, &W_{\calC_3}^+&=\{e_2,e_5,e_4\}, &W_{\calC_4}^+&=\{e_2,e_6,e_4\} \\
        W_{\calC_1}^{-}&=\{e_1,e_3\}, &W_{\calC_2}^{-}&=\{e_5,e_1\}, &W_{\calC_3}^-&=\{e_6,e_1,e_1\}, &W_{\calC_4}^-&=\{e_3,e_5,e_3\}
    \end{align*}
    Thus, we get
    \[
    \calW(\calB)=\set{(z_1,z_2) \in \RR^2 : \hspace{-0.1cm}
\begin{array}{cccc}
-2 < z_1 < 2, \vspace{0.1cm}\\
 -2 < z_2 < 2, \vspace{0.1cm}\\
-3 < z_1-z_2 < 3, \vspace{0.1cm}\\
-3<z_1+z_2<3
\end{array}}.
    \]
\end{ex}

\section{Toric rings of balanced signed posets}\label{sec:balanced}

In this section, we focus on the toric rings of balanced signed posets.

\subsection{Balanced signed posets}
First, we recall balanced signed posets.
Let $\Sigma=(\Gamma,\sigma)$ be a signed graph with orientation $\tau$.
Recall that $\Sigma$ has no loops and no loose edges as in Section~\ref{sec:4}.

We say that $\Sigma$ is \textit{balanced} if any circle on $\Sigma$ is positive.
It is known that $\Sigma$ is balanced if and only if $\Sigma$ can be converted into a signed graph with only positive edges except for halfedges by \textit{switching} operations, that is, for a fixed vertex $v$ flipping the sign of $\tau(v,e)$ for all node-edge-incidences involving $v$ (\cite[Corollary~3.3]{zaslavsky1982signed}).
Moreover, $\Sigma$ is balanced if and only if $A_{\Sigma}$ is \textit{totally unimodular}, that is, every minor of $A_{\Sigma}$ is $0$ or $\pm 1$ (\cite[Theorem~8A.5]{zaslavsky1982signed}).

We say that a signed poset $P\subset \Phi_{B_d}$ is \textit{balanced} if its Hasse diagram $\calH_P$ is balanced.
We now explain that the study of the toric ring of a balanced signed poset reduces to that of a signed poset in a root system of type $A$.

Let $P\subset \Phi_{B_d}$ be a balanced signed poset whose Hasse diagram is connected.
We construct from $\calH_P$ a new signed poset in $\Phi_{A_d}$ as follows.
Since $\calH_P$ is balanced, we can obtain a signed graph $\Sigma$ such that every ordinary edge of $\Sigma$ is positive by applying a suitable sequence of switchings to $\calH_P$.

If $\Sigma$ has a halfedge, adjoin a new vertex $d+1$ to the underlying graph of $\Sigma$. For each halfedge $e$, replace it with an ordinary edge $e'$ joining its original endpoint $v$ to $d+1$, and choose the local direction at $v$ to be the same as that of the halfedge $e$. The local direction at $d+1$ is then determined so that the resulting edge is positive. If $\Sigma$ has no halfedges, we do nothing.

Let $P^b$ be the signed poset whose Hasse diagram coincides with this new oriented signed graph.
The Hasse diagram $\calH_{P^b}$ has no halfedges and its all edges are positive, and hence $P^b\subset \Phi_{A_d}$.
Moreover, its bidirection defines an ordinary orientation, and therefore $\calH_{P^b}$ may be viewed as a directed graph.
We denote this orientation of $\Gamma_{P^b}$ by $\scrO_{P^b}$.

\begin{ex}\label{ex:hibi}
    Let $P\subset \Phi_{B_4}$ be a signed poset with
    \[
    \widetilde{P}:=\{\chi_3+\chi_1,\; \chi_4+\chi_1,\; \chi_3+\chi_2,\; \chi_4+\chi_2,\; \chi_1,\; \chi_2\}.
    \]
    Then we can see that $P$ is balanced.
    Applying the switching operation to the vertices $1$ and $2$, and adding the new vertex $5$, we obtain the signed poset $P^b$ with
     \[
        \widetilde{P^b}:=\{\chi_3-\chi_1,\; \chi_4-\chi_1,\; \chi_3-\chi_2,\; \chi_4-\chi_2,\; \chi_5-\chi_1,\; \chi_5-\chi_2\}.
    \]

    \begin{figure}[H]
    \centering
    \begin{minipage}{0.46\linewidth}
        \centering
        \scalebox{0.9}{
            \begin{tikzpicture}[line width=0.05cm]
            \coordinate (a1) at (0, 0); 
      \coordinate (a2) at (2, 0); 
      \coordinate (a3) at (0, 2); 
      \coordinate (a4) at (2, 2);

      \draw[
  postaction={
    decorate,
    decoration={
      markings,
      mark=at position 0.75 with {\arrow{>}},
      mark=at position 0.25 with {\arrow{<}}
    }
  }
] (a1) -- (a3);
\draw[
  postaction={
    decorate,
    decoration={
      markings,
      mark=at position 0.75 with {\arrow{>}},
      mark=at position 0.25 with {\arrow{<}}
    }
  }
] (a1) -- (a4);
\draw[
  postaction={
    decorate,
    decoration={
      markings,
      mark=at position 0.75 with {\arrow{>}},
      mark=at position 0.25 with {\arrow{<}}
    }
  }
] (a2) -- (a3);
\draw[
  postaction={
    decorate,
    decoration={
      markings,
      mark=at position 0.75 with {\arrow{>}},
      mark=at position 0.25 with {\arrow{<}}
    }
  }
] (a2) -- (a4);
\draw[
  postaction={
    decorate,
    decoration={
      markings,
      mark=at position 0.65 with {\arrow{<}}
    }
  }
] (a1) -- (0,-0.8);
\draw[
  postaction={
    decorate,
    decoration={
      markings,
      mark=at position 0.65 with {\arrow{<}}
    }
  }
] (a2) -- (2,-0.8);
\draw[dotted] (0,-1)--(0,-1.3);
\draw[dotted] (2,-1)--(2,-1.3);
      \draw [line width=0.05cm, fill=white] (a1) circle [radius=0.15] node [left=5pt] {\large $1$};
      \draw [line width=0.05cm, fill=white] (a2) circle [radius=0.15] node [right=5pt] {\large $2$};
      \draw [line width=0.05cm, fill=white] (a3) circle [radius=0.15] node [left=5pt] {\large $3$};
      \draw [line width=0.05cm, fill=white] (a4) circle [radius=0.15] node [right=5pt] {\large $4$};
            \end{tikzpicture}}
        \caption{The Hasse diagram $\calH_P$}
        \label{fig:beforereduced}
    \end{minipage}
    \begin{minipage}{0.46\linewidth}
        \centering
        \scalebox{0.9}{
        \begin{tikzpicture}[line width=0.05cm]
        \coordinate (a1) at (0, 0); 
      \coordinate (a2) at (2, 0); 
      \coordinate (a3) at (-1, 2); 
      \coordinate (a4) at (1, 2);
      \coordinate (a5) at (3,2);

      \draw[
  postaction={
    decorate,
    decoration={
      markings,
      mark=at position 0.5 with {\arrow{>}}
    }
  }
] (a1) -- (a3);
\draw[
  postaction={
    decorate,
    decoration={
      markings,
      mark=at position 0.65 with {\arrow{>}}
    }
  }
] (a1) -- (a4);
\draw[
  postaction={
    decorate,
    decoration={
      markings,
      mark=at position 0.65 with {\arrow{>}}
    }
  }
] (a2) -- (a3);
\draw[
  postaction={
    decorate,
    decoration={
      markings,
      mark=at position 0.65 with {\arrow{>}}
    }
  }
] (a2) -- (a4);
\draw[
  postaction={
    decorate,
    decoration={
      markings,
      mark=at position 0.65 with {\arrow{>}}
    }
  }
] (a1) -- (a5);
\draw[
  postaction={
    decorate,
    decoration={
      markings,
      mark=at position 0.5 with {\arrow{>}}
    }
  }
] (a2) -- (a5);
      \draw [line width=0.05cm, fill=white] (a1) circle [radius=0.15] node [left=5pt] {\large $1$};
      \draw [line width=0.05cm, fill=white] (a2) circle [radius=0.15] node [right=5pt] {\large $2$};
      \draw [line width=0.05cm, fill=white] (a3) circle [radius=0.15] node [left=5pt] {\large $3$};
      \draw [line width=0.05cm, fill=white] (a4) circle [radius=0.15] node [right=5pt] {\large $4$};
      \draw [line width=0.05cm, fill=white] (a5) circle [radius=0.15] node [right=5pt] {\large $5$};
            \end{tikzpicture}}
        \caption{The graph $\Gamma_{P^b}$ with orientation $\scrO_{P^b}$}
        \label{fig:afterreduced}
    \end{minipage}
\end{figure}
\end{ex}

The passage from $P$ to $P^b$ has a simple ring-theoretic interpretation. Switching corresponds to changing the signs of the corresponding coordinates in $\RR^d$, while adjoining the new vertex $d+1$ corresponds to a Laurent polynomial extension. Hence one obtains
\[
R_P[t_{d+1}^{\pm 1}] \cong R_{P^b}.
\]
In particular, divisor-theoretic properties such as the divisor class group and the Gorenstein property of $R_P$ are reduced to those of $R_{P^b}$.

\medskip

Let $\Pi:=\{p_1,\ldots,p_{d-1}\}$ be a poset equipped with a partial order $\preceq$.
For a subset $I \subset \Pi$, we say that $I$ is a \textit{filter} or \textit{upper set} of $\Pi$ if $p \in I$ and $p \preceq q$ imply $q \in I$. 

We define a $\kk$-algebra associated with $\Pi$ as follows:
\[
\kk[\Pi]:=\kk\left[\; \left(\prod_{p_i\in I}t_i\right) t_{d} : \text{$I$ is a filter of $P$}\right]\subset \kk[t_1,\ldots,t_{d}].
\]
This $\kk$-algebra is called the {\em Hibi ring} of $P$.
Hibi rings are usually defined via poset ideals (down sets).
In this paper, however, we use filters in place of poset ideals, in order to make our notation compatible with the orientation conventions for Hasse diagrams.

Set $\widehat{\Pi}=\Pi \cup \{p_0, p_d\}$, 
where $p_0$ (resp. $p_d$) is a new minimum (resp. maximum) element not belonging to $\Pi$.
Let $H_{\widehat{\Pi}}$ be the ordinary Hasse diagram of $\widehat{\Pi}$.
For each edge $e:=\{p_i\prec p_j\}$ of $H_{\widehat{\Pi}}$, let
\begin{align*}
\vb_e:=
\begin{cases}
\chi_j-\chi_i \;&\text{ if }i \not= 0, \\
\chi_j \; &\text{ if }i=0 
\end{cases}\; \in \RR^{d}.
\end{align*}
Moreover, let $E(\widehat{\Pi}):=\{\vb_e : e\text{ is an edge of }H_{\widehat{\Pi}}\}\subset \Phi_{B_d}$ and let $P_\Pi\subset \Phi_{B_d}$ be the positive linear closure of $E(\widehat{\Pi})$.

Then we can see that $P_{\Pi}$ is a balanced signed poset, $\widetilde{P_{\Pi}}=E(\widehat{\Pi})$ and that $\Gamma_{P^b_\Pi}$ with orientation $\scrO_{P^b_\Pi}$ coincides with $H_{\widehat{\Pi}}$.
Moreover, we have $R_{P^b_{\Pi}}\cong R_{P_\Pi}[t_0^{\pm 1}]=\kk[\Pi][t_0^{\pm 1}]$, which shows that any Hibi ring is isomorphic to the toric ring of a balanced signed poset.

On the other hand, there is the toric ring of a balanced signed poset which is not isomorphic to any Hibi ring as follows:
\begin{ex}
    Let $P\subset \Phi_{B_4}$ be the signed poset defined in Example~\ref{ex:hibi}.

    {\tt MAGMA}(\cite{bosma1997magma}) tells us that $\{(1,0,0,1),(0,1,0,0),(0,0,1,0),(0,0,0,1),(1,1,-1,-1)\}$ is the Hilbert basis of the cone $C^{\vee}(\widetilde{P})$.
    Therefore, we have
    \[
    R_P=\kk[t_1,t_2,t_3,t_4,t_1t_2t_3^{-1}t_4^{-1}]\cong \kk[x_1,x_2,x_3,x_4,x_5]/(x_1x_2-x_3x_4x_5).
    \]
    Moreover, we have $\dim R_P=4$ and $\Cl(R_P)\cong \ZZ^2$ from Proposition~\ref{prop:CL(R_P)}.

    If there exists a poset $\Pi$ with $\kk[\Pi]\cong R_P$, then $\Pi$ must satisfy $\#\Pi=\dim R_{P_\Pi} -1=3$ and $\#P_\Pi=\dim R_{P_\Pi}+\rank \Cl(R_{P_\Pi})=6$.
    In this case, we can see that $\Pi$ is 3-element antichain, that is, $p_i$ and $p_j$ are incomparable for any two distinct elements $p_i,p_j\in \Pi$, and that $\kk[\Pi]$ is not hypersurface while $R_P$ is.
    Thus, there is no poset $\Pi$ with $\kk[\Pi]\cong R_P$.
\end{ex}

\subsection{Conic divisorial ideals of toric rings of balanced posets}
In this subsection, we characterize the conic divisorial ideals of the toric ring of a balanced signed poset.
In the previous subsection, we showed that the case of balanced signed posets can be reduced to type $A$. 
Therefore, from now on, we always consider signed posets in $\Phi_{A_{d-1}}$.

Throughout this subsection, let $P \subset \Phi_{A_{d-1}}$ be a signed poset whose Hasse diagram is connected.
As mentioned in the previous section, $\calH_P$ is regarded as a directed graph with underlying graph $\Gamma_P$ and orientation $\scrO_P$.

Let $\calF$ be a subset of $E(\Gamma_P)$ such that $\calH_P(\calF)$ is a signed tree.
Moreover, let $\epsilon_1,\ldots,\epsilon_t$ be the edges in $E(\Gamma_P)\setminus \calF$ and let $\calC_i$ be the fundamental circuit with respect to $\calF$ for each $i\in [t]$.
Furthermore, let $\calB:=\{\beta_e : e\in E(\Gamma_P) \}$ be the set of weights of $R_P$ defined as in Section~\ref{subsection:4.2}, and let $\phi : \ZZ^{E(\Gamma_P)}\to \calG_P=\ZZ^t$ be the morphism induced by $\chi_e\mapsto \beta_e$.

\medskip

Since $\calH_P$ has no negative circles and no halfedges, a circuit of $M(\widetilde{P})$ one-to-one corresponds to a cycle of $\Gamma_P$.
For a cycle $\calC:=(i_0,i_1,\ldots, i_{\ell-1},i_\ell=i_0)$ of $\Gamma_P$, where $i_j\in V(\Gamma_P)$ and $\{i_{j-1},i_j\}\in E(\Gamma_P)$, we set
\[
\calC^{\uparrow}:=\left\{\{i_{j-1},i_j\} : \chi_{i_j}-\chi_{i_{j-1}}\in \widetilde{P}\right\}, \quad \calC^{\downarrow}:=\left\{\{i_{j-1},i_j\} : \chi_{i_{j-1}}-\chi_{i_j}\in \widetilde{P}\right\}.
\]
Note that we can take a canonical walk $W_\calC$ on $\calC$ with $W^+_{\calC}=\calC^{\uparrow}$ and $W^-_{\calC}=\calC^{\downarrow}$.
In particular, we may assume that $\epsilon_i\in \calC^{\uparrow}_i$ for each $i\in [t]$.

\medskip

From Theorem~\ref{thm:weightconic}, we have the following theorem, which generalizes the result of Hibi rings (\cite[Theorem~1.3]{higashitani2019conic}, \cite[Theorem~1.2]{matsushita2022conic}):
\begin{thm}\label{thm:weightAconic}
Work with the above notation and assumptions.
Then we have
    \[
\calW(\calB)=\bigcap_{\calC}\left\{\zb \in \RR^t : -\#\calC^\downarrow<\sum_{\epsilon_i\in \calC^\uparrow}\zb(i)- \sum_{\epsilon_j\in \calC^\downarrow}\zb(j)< \#\calC^\uparrow\right\},
\]
where $\calC$ runs over all cycles in $\Gamma_P$.
\end{thm}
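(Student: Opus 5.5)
The plan is to derive the statement as a direct specialization of Theorem~\ref{thm:weightconic}. In the type $A$ setting, $\calH_P$ is connected, has no halfedges, and has no negative circles. So we are in Case~1, $\calH_P(\calF)$ is a signed tree, $\calG_P=\ZZ^t$ has no torsion, and $\widetilde{\beta_e}=\beta_e=b_e$. Every circuit of $\calH_P$ is therefore of type $\calC_{T_1}$, i.e.\ a positive circle. Since all edges are positive, these circles are exactly the cycles of $\Gamma_P$. Consequently the index set $\fkC(\calH_P)$ in Theorem~\ref{thm:weightconic} can be replaced by the set of all cycles $\calC$ of $\Gamma_P$.

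Next I compute $\rho_W$ on a cycle $\calC=(i_0,\ldots,i_\ell=i_0)$. Traverse it by the canonical walk $W_\calC=(e_1,v_1,\ldots,e_\ell)$ with $e_j=\{i_{j-1},i_j\}$ and $v_j=i_j$. Every edge is an ordinary directed edge $\chi_{\mathrm{head}}-\chi_{\mathrm{tail}}$, so $\tau(e,\mathrm{head})=+$ and $\tau(e,\mathrm{tail})=-$. Consider a vertex $v_j$ of the walk.
\begin{itemize}
\item If $e_j$ and $e_{j+1}$ point the same way along the walk, then $\tau(e_j,v_j)\tau(e_{j+1},v_j)=-1$, so $\rho$ is unchanged.
\item If they point opposite ways, then $\rho$ flips.
\end{itemize}
Hence $\rho_W(e)=1$ exactly for edges oriented like $e_1$ relative to the traversal. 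Choosing the traversal direction so that $e_1\in\calC^\uparrow$ gives $W^+_\calC=\calC^\uparrow$ and $W^-_\calC=\calC^\downarrow$, as remarked before the statement. Each edge occurs once in a $\calC_{T_1}$ walk, so these are honest sets and $\#W^\pm_\calC=\#\calC^{\uparrow/\downarrow}$.

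For the fundamental circuits, I choose the traversal of $\calC_i$ to start at $\epsilon_i$ in the direction agreeing with $\epsilon_i$'s orientation. Then $\rho_{W_{\calC_i}}(\epsilon_i)=1$, i.e.\ $\epsilon_i\in\calC_i^\uparrow$. The weights $b_e$ are defined via these walks, so this is a normalization fixed once and consistently. With this choice, the coefficients $\rho_{W_{\calC_i}}(\epsilon_i)$ in Theorem~\ref{thm:weightconic} all equal $1$. The middle expression becomes $\sum_{\epsilon_i\in\calC^\uparrow}\zb(i)-\sum_{\epsilon_j\in\calC^\downarrow}\zb(j)$, and the bounds become $-\#\calC^\downarrow$ and $\#\calC^\uparrow$, which is the claimed description.

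The main point to check carefully is the sign bookkeeping. The first issue is the direction-of-traversal convention for a general cycle $\calC$: reversing the traversal swaps $\calC^\uparrow$ and $\calC^\downarrow$, and it also swaps the two inequalities after negation. So the inequality for $\calC$ is independent of that choice, and it suffices to note that $\ab_C$ is only defined up to sign in Theorem~\ref{thm:conicdesc}. The second issue is verifying the $\rho$-propagation rule above from the definition of $\tau$.
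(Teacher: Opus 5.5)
Your proposal is correct and is essentially the paper's argument. The paper obtains this theorem directly from Theorem~\ref{thm:weightconic}, using the same observations you make: circuits are exactly the cycles of $\Gamma_P$, a canonical walk can be taken with $W^+_\calC=\calC^\uparrow$ and $W^-_\calC=\calC^\downarrow$, and the normalization $\epsilon_i\in\calC_i^\uparrow$ forces $\rho_{W_{\calC_i}}(\epsilon_i)=1$. Your explicit check of the $\rho$-propagation rule and of the invariance under reversing a cycle fills in details the paper leaves implicit.
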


Moreover, the conic divisorial ideals of the toric ring of a balanced signed poset can be characterized in terms of certain acyclic orientations of a graph.
\begin{thm}\label{thm:characonicA}
    There is a bijection among the following three sets:
    \begin{itemize}
        \item[(i)] the isomorphism classes of conic divisorial ideals of $R_P$;
        \item[(ii)] the lattice points in $\calW(\calB)$;
        \item[(iii)] the acyclic orientations of $\Gamma_{P}$ with a chosen vertex as the unique source.
    \end{itemize}
\end{thm}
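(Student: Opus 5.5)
The bijection (i)$\leftrightarrow$(ii) is essentially formal. Since $P\subset\Phi_{A_{d-1}}$, the Hasse diagram $\calH_P$ is balanced with no halfedges, so we are in Case~1. Proposition~\ref{prop:CL(R_P)} then gives $\Cl(R_P)\cong\ZZ^t$, which is torsion-free. Proposition~\ref{prop:coniccorres} therefore identifies $\scrC(R_P)$ with $\calW(\calB)\cap\ZZ^t$, and Theorem~\ref{thm:weightAconic} makes this set explicit. The real work is the bijection (ii)$\leftrightarrow$(iii), and I would prove it by counting both sides rather than by building an explicit map first.

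For counting, I use Corollary~\ref{cor:numberconic}. Since $A_{\widetilde{P}}$ is totally unimodular, $g_r(A_S)$ equals $1$ if $S$ has full rank $r=d-1$ and equals $0$ otherwise. Here full rank means the edge set $S$ spans a connected spanning subgraph of $\Gamma_P$. This gives
\[
\#\scrC(R_P)=\sum_{S\text{ connected spanning}}(-1)^{\#S-(d-1)}.
\]
This sum is, up to sign conventions, the Tutte polynomial evaluation $T_{\Gamma_P}(1,0)$, via the corank--nullity expansion with the $x$-exponent vanishing exactly on connected spanning subgraphs. By Greene--Zaslavsky, $T_{\Gamma_P}(1,0)$ counts the acyclic orientations of $\Gamma_P$ whose unique source is a fixed vertex. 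Equivalently, following Remark~\ref{rem:number}, $M_V(1,0)=T(1,0)$ in the unimodular case. So (ii) and (iii) have the same cardinality, and this already yields abstract bijections among all three sets.

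I would also try to give an explicit bijection, generalizing the Hibi ring construction of \cite{matsushita2022conic}, since that is presumably what the author intends. By Remark~\ref{rem:number}, conic classes correspond to the chambers of the toric arrangement $\{x_j-x_i\in\ZZ\}$ on $\TT^d$ modulo the diagonal direction. A point $\xb\in\RR^d$ in general position determines the fractional parts $\{x_v\}$. I orient each edge $\{u,v\}$ of $\Gamma_P$ from the vertex with smaller fractional part to the one with larger fractional part. This orientation is acyclic and depends only on the chamber, and translating by $\ZZ^d$ does not change it. Translating along the all-ones direction rotates the fractional parts cyclically. I would normalize this freedom by shifting so that the chosen vertex $v_0$ has the smallest fractional part. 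Then the task is to show that $v_0$ becomes the unique source and that distinct chambers give distinct normalized orientations; this is the hardest step.

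I expect that step to be the main obstacle. A chamber of the toric arrangement is not determined by the orientation of the fractional parts alone: it also records the integer parts $\lceil x_j-x_i\rceil$ along the edges, and reconciling these is exactly the delicate part. If the explicit map becomes unwieldy, I would fall back on the counting argument, which is rigorous given the cited Tutte/Greene--Zaslavsky identity. In that case I would only remark that (i)$\leftrightarrow$(ii) is canonical.
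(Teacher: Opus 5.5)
Your counting argument is correct. Because the theorem only asserts that bijections exist between finite sets, it already proves the statement. It is exactly the cardinality step of the paper's proof, $\#(\calW(\calB)\cap\ZZ^t)=M_{\widetilde{P}}(1,0)=T_{\Gamma_P}(1,0)=\#\scrA(\Gamma_P,v)$. The identification with $T_{\Gamma_P}(1,0)$ is exact, with no sign issue: at $x=1$, $y=0$ only connected spanning $S$ survive, and each contributes $(-1)^{\#S-(d-1)}$.

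The paper, however, uses this count only to upgrade an explicit injection to a bijection. It fixes $v$ and sends $\scrO\in\scrA(\Gamma_P,v)$ to $\phi(\chi_{E(\scrO,\scrO_P)})$, the class of the $0/1$-vector supported on the edges where $\scrO$ agrees with $\scrO_P$.
\begin{itemize}
\item Lemma~\ref{lem:acyclic} puts this class in $\calW(\calB)$: a violated cycle inequality would make that cycle directed in $\scrO$.
\item For injectivity, Lemma~\ref{lem:chip} turns equality of classes into the Laplacian relation $d^{\out}_{\scrO_2}-d^{\out}_{\scrO_1}=A_{\widetilde P}A_{\widetilde P}^T\xb$.
\item The chip-firing lemma for acyclic orientations with the same unique source then gives $d^{\out}_{\scrO_1}=d^{\out}_{\scrO_2}$.
\item Finally, $\ker(A_{\widetilde P})\cap\im(A^T_{\widetilde P})=\{\mathbf{0}\}$ finishes the argument.
\end{itemize}
The paper's route yields a canonical, combinatorial bijection, which is the one generalizing Robinson's theorem for Hibi rings. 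Your route yields only an abstract one.

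Your tentative explicit map is wrong as stated, and the problem is more basic than the integer parts you mention: the fractional-part orientation is not constant on chambers. The arrangement only constrains $x_u-x_w$ along edges. So inside a single chamber, a coordinate $x_w$ with $w$ not adjacent to $v_0$ can cross an integer, which flips the comparisons on edges at $w$.

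Take the path with edges $\{v_0,a\}$ and $\{a,b\}$, which has one chamber and one element of $\scrA(\Gamma,v_0)$. Fix $x_{v_0}=0$ and $x_a=1/2$, and move $x_b$ from $3/10$ to $-3/10$. Since $x_b-x_a$ stays in $[-4/5,-1/5]$, you never leave the chamber. Yet the orientation changes from one in which $b$ is a second source to $v_0\to a\to b$. So neither chamber-invariance nor the unique-source property holds.

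If you want an explicit map, go in the opposite direction. For acyclic $\scrO$, take $\yb$ strictly increasing along $\scrO$ with $|y_u-y_w|<1$ on every edge. Then $\ceil{A^T_{\widetilde P}\yb}=\chi_{E(\scrO,\scrO_P)}$. This recovers the paper's map and gives a one-line proof that its image is conic. Injectivity on $\scrA(\Gamma_P,v)$ still needs the chip-firing argument.
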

This theorem generalizes the result of \cite[Theorem~2.3.13]{robinson2020big} for Hibi rings.
Although the proof follows a similar idea, we give a simpler and shorter argument.

We need some notation and lemmas.
Let $\scrA(\Gamma_{P})$ be the set of acyclic orientations of $\Gamma_{P}$.
Note that $\scrO_P\in \scrA(\Gamma_P)$.
In addition, for a vertex $v\in V(\Gamma_{P})$, let $\scrA(\Gamma_{P},v)$ be the set of acyclic orientations of $\Gamma_{P}$ with a unique source at vertex $v$.

Since $P$ is a balanced signed poset, $A_{\widetilde{P}}$ is totally unimodular, and hence the multiplicity Tutte polynomial $M_{\widetilde{P}}(x,y)$ coincides with the Tutte polynomial of $\Gamma_P$.
Here, the \textit{Tutte polynomial} of a simple graph $G=(V,E)$ is defined by
\[
T_G(x,y):=\sum_{A\subset E}(x-1)^{c(A)-c(E)}(y-1)^{c(A)+\#A-\#V},
\]
where $c(A)$ denotes the number of connected components of the subgraph induced by $A$.
It is known that $T_{\Gamma_P}(1,0)=\# \scrA(\Gamma_P,v)$ for any $v\in V(\Gamma_P)$ (\cite[Theorem~7.3]{greene1983onthe}).

For two orientations $\scrO_1$ and $\scrO_2$ of $\Gamma_P$, we define $E(\scrO_1,\scrO_2)$ to be the set of edges of $\Gamma_{P}$ that are oriented in the same direction in both $\scrO_1$ and $\scrO_2$.

\begin{lem}\label{lem:acyclic}
    For any $\scrO\in \scrA(\Gamma_{P})$, we have $\phi(\chi_{E(\scrO,\scrO_{P})})\in \calW(B)$.
\end{lem}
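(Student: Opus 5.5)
Because $\calW(\calB)=\pi_\RR([0,1)^n)$ and $\phi$ is the $\ZZ$-linearization of $\pi$ here, we have $\phi(\chi_{E(\scrO,\scrO_P)})=\pi_\RR(\chi_{E(\scrO,\scrO_P)})$. A $0/1$ vector is not in $[0,1)^n$, so the point is to change the representative. We look for $\yb\in\RR^d$ with
\[
\chi_{E(\scrO,\scrO_P)}-A^T_{\widetilde{P}}\yb\in[0,1)^{E(\Gamma_P)} .
\]
Since $\im(A^T)\subset\ker(\phi)$ by Theorem~\ref{thm:weightR_P}, extended $\RR$-linearly, such a $\yb$ gives
\[
\phi(\chi_{E(\scrO,\scrO_P)})=\pi_\RR\bigl(\chi_{E(\scrO,\scrO_P)}-A^T\yb\bigr)\in\calW(\calB).
\]

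The vector $\yb$ comes from acyclicity. Choose a linear extension of $\scrO$, i.e.\ a height function $h:V(\Gamma_P)\to\RR$ with $h(u)<h(v)$ whenever $\scrO$ orients an edge $u\to v$. Scale it so that all differences along edges have absolute value in $(0,1)$; for example, take distinct values in $(0,1)$ ordered by a topological sort of $\scrO$. Set $\yb:=-h$.

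Let $e=\chi_j-\chi_i\in\widetilde{P}$, so that $\scrO_P$ orients $e$ as $i\to j$. Then
\[
(A^T\yb)(e)=h(i)-h(j).
\]
If $e\in E(\scrO,\scrO_P)$, then $\scrO$ also orients $i\to j$, so $h(i)<h(j)$ and $(A^T\yb)(e)\in(-1,0)$. Hence
\[
\chi_{E(\scrO,\scrO_P)}(e)-(A^T\yb)(e)=1-(h(i)-h(j))\in(1,2),
\]
which lands on the wrong side. So the sign of $\yb$ must be flipped: take $\yb:=h$. Then for $e\in E(\scrO,\scrO_P)$,
\[
1-(h(j)-h(i))\in(0,1),
\]
and for $e\notin E(\scrO,\scrO_P)$ we have $h(j)<h(i)$, so
\[
0-(h(j)-h(i))\in(0,1).
\]
Every coordinate therefore lies in $(0,1)\subset[0,1)$, which proves the claim.

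The main care needed is consistency of sign conventions: which endpoint of an edge of $\widetilde{P}$ is the head under $\scrO_P$, given that the column of $e=\chi_j-\chi_i$ has $+1$ at $j$, and how $\pi$ relates to $\phi$ (both are induced by $\chi_e\mapsto\beta_e$ with kernel $\im(A^T)$). Strictness of the height differences is exactly what acyclicity of $\scrO$ provides. As a cross-check, one can instead verify the inequalities of Theorem~\ref{thm:weightAconic} directly. For each cycle $\calC$, $\ab_C^*$ applied to $\phi(\chi_{E(\scrO,\scrO_P)})$ counts edges of $\calC$ agreeing with $\scrO_P$ with sign $+$ on $\calC^\uparrow$ and $-$ on $\calC^\downarrow$. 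Acyclicity of $\scrO$ forbids all edges of $\calC$ from being coherently oriented in $\scrO$, which gives the strict inequalities.
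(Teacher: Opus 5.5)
Your argument is correct, and it takes a different route from the paper.

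\textbf{The paper's route.} The paper argues through the facet description of Theorem~\ref{thm:weightAconic}. For each cycle $\calC$ it computes
\[
\ab^*_C(\bb)=\#(E(\scrO,\scrO_P)\cap\calC^{\uparrow})-\#(E(\scrO,\scrO_P)\cap\calC^{\downarrow}),
\]
and observes that reaching either bound forces $\calC$ to be a directed cycle of $\scrO$. This is essentially your closing ``cross-check''.

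\textbf{Your route.} You work directly from the definition $\calW(\calB)=\pi_\RR([0,1)^n)$ and exhibit an explicit representative. Taking $h$ to be a topological-order height function with values in $(0,1)$, you get $\chi_{E(\scrO,\scrO_P)}-A^T_{\widetilde P}h\in(0,1)^{E(\Gamma_P)}$. Equivalently, $\chi_{E(\scrO,\scrO_P)}=\lceil A^T_{\widetilde P}h\rceil$, so $\calT(\chi_{E(\scrO,\scrO_P)})$ is conic straight from Definition~\ref{def_conic}.

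\textbf{What each buys.} Your approach is more elementary and self-contained. It uses only $\im(A^T_{\widetilde P})\subset\ker(\phi)$ from Theorem~\ref{thm:weightR_P}. It bypasses Lemma~\ref{lem:a_C}, Theorem~\ref{thm:conicdesc} (including the identification of $\calW(\calB)$ with the interior of the zonotope), and Theorem~\ref{thm:weightAconic}. It also fits the toric-arrangement picture of Remark~\ref{rem:number}: $h$ lies in the chamber of the graphic arrangement cut out by $\scrO$.

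The paper's approach, in exchange, explains why acyclicity is exactly the right condition. If $\scrO$ has a directed cycle $\calC$, the point attains the bound for $\calC$ and so lies on the boundary. The converse therefore comes for free, whereas your construction would need a separate argument for it.

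\textbf{Write-up.} Drop the false start with $\yb=-h$ and set $\yb:=h$ from the outset. Also, the aside that a $0/1$ vector is never in $[0,1)^n$ fails for the zero vector, though nothing in your argument depends on it.
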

\begin{proof}
    Assume that $\bb:=\phi(\chi_{E(\scrO,\scrO_{P})})=\sum_{e\in E(\scrO,\scrO_{P})}\beta_e\notin \calW(\calB)$.

    By Theorem~\ref{thm:weightAconic}, there exists a cycle $\calC$ of $\Gamma_P$ with $\ab^*_C(\bb)\ge \#\calC^{\uparrow}$ or $\ab^*_C(\bb)\le -\#\calC^{\downarrow}$, where $C\subset E(\Gamma_P)$ is the circuit of $M(\widetilde{P})$ corresponding to $\calC$.

    On the other hand, for a cycle $\calC$ of $\Gamma_P$, it follows from Lemma~\ref{lem:a_C} that
    \[
    \ab^*_C(\bb)=\sum_{e\in E(\scrO,\scrO_P)}\ab^*_C(\beta_e)=\sum_{e\in E(\scrO,\scrO_P)}\ab_C(e)=\#(E(\scrO,\scrO_P)\cap \calC^{\uparrow})-\#(E(\scrO,\scrO_P)\cap \calC^\downarrow).
    \]
    
    Therefore, if $\ab^*_C(\bb)\ge \#\calC^{\uparrow}$ (resp. $\ab^*_C(\bb)\le -\#\calC^{\downarrow}$), then $E(\scrO,\scrO_P)\subset \calC^{\uparrow}$ and $E(\scrO,\scrO_P)\cap \calC^{\downarrow}=\emptyset$ (resp. $E(\scrO,\scrO_P)\subset \calC^{\downarrow}$ and $E(\scrO,\scrO_P)\cap \calC^{\uparrow}=\emptyset$).
    This implies that $\calC$ is a directed cycle in $\scrO$, contradicting $\scrO\in \scrA(\Gamma_P)$.
\end{proof}

Let $d_{\Gamma_P}\in \ZZ^{V(\Gamma_P)}$ denote the vector whose $v$-th component is the degree of $v$ in $\Gamma_P$.
Similarly, for an orientation $\scrO$ of $\Gamma_P$, let $d^{\ini}_{\scrO}\in \ZZ^{V(\Gamma_P)}$ (resp. $d^{\out}_\scrO\in \ZZ^{V(\Gamma_P)}$) denote the vector whose $v$-th component is the in-degree (resp. out-degree) of $v$ in $\scrO$.
\begin{lem}\label{lem:chip}
    Let $\scrO_1$ and $\scrO_2$ be two orientations of $\Gamma_{P}$.
    Then we have
    \[
    A_{\widetilde{P}}(\chi_{E(\scrO_1,\scrO_P)}-\chi_{E(\scrO_2,\scrO_P)})=d^{\out}_{\scrO_2}-d^{\out}_{\scrO_1}.
    \]
\end{lem}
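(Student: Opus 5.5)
The plan is to prove the stronger, orientation-independent identity
\[
A_{\widetilde{P}}\,\chi_{E(\scrO,\scrO_P)}+d^{\out}_{\scrO}=d^{\ini}_{\scrO_P}
\]
for every orientation $\scrO$ of $\Gamma_P$. Applying it to $\scrO_1$ and $\scrO_2$ and subtracting cancels $d^{\ini}_{\scrO_P}$, which leaves exactly
\[
A_{\widetilde{P}}(\chi_{E(\scrO_1,\scrO_P)}-\chi_{E(\scrO_2,\scrO_P)})=d^{\out}_{\scrO_2}-d^{\out}_{\scrO_1}.
\]

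To prove the identity I will check it edge by edge, since both sides are sums of contributions indexed by $E(\Gamma_P)$. Since $P\subset\Phi_{A_{d-1}}$ and $\calH_P$ has only positive ordinary edges, each edge $e\in E(\Gamma_P)=\widetilde{P}$ is a vector $\chi_h-\chi_u$. In the orientation $\scrO_P$ the edge $e$ points from $u$ to $h$, by the convention that the column of $A_{\calH_P}$ has $+1$ at the head. So $A_{\widetilde{P}}\chi_e=\chi_h-\chi_u$. Also $d^{\out}_{\scrO}=\sum_{e}\chi_{\mathrm{tail}_{\scrO}(e)}$ and $d^{\ini}_{\scrO_P}=\sum_e \chi_{h(e)}$.

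I then split into two cases for each edge $e$.
\begin{itemize}
\item If $e\in E(\scrO,\scrO_P)$, then $e$ contributes $\chi_h-\chi_u$ to $A_{\widetilde{P}}\chi_{E(\scrO,\scrO_P)}$. Its tail in $\scrO$ is $u$, so it contributes $\chi_u$ to $d^{\out}_{\scrO}$. The total is $\chi_h$.
\item If $e\notin E(\scrO,\scrO_P)$, then $e$ contributes $0$ to the first term. Its tail in $\scrO$ is $h$, so it contributes $\chi_h$ to $d^{\out}_{\scrO}$. The total is again $\chi_h$.
\end{itemize}
Summing over all edges gives $d^{\ini}_{\scrO_P}$ in both cases, which proves the identity.

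There is no real obstacle here. The only point needing care is the sign convention: the head of $e$ in $\scrO_P$ must be the coordinate carrying $+1$ in $e$, and $E(\scrO,\scrO_P)$ must mean agreement with $\scrO_P$. A wrong convention would flip the sign of the result, so I would verify it against the definition of $A_{\calH_P}$ and against the description of $\calC^{\uparrow}$ given just before Theorem~\ref{thm:weightAconic}.
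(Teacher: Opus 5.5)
Your proof is correct and is essentially the paper's argument: both check edge by edge how the columns $\chi_h-\chi_u$ of $A_{\widetilde{P}}$ interact with an orientation. The only difference is that the paper applies $A_{\widetilde{P}}$ to $\psi_i=2\chi_{E(\scrO_i,\scrO_P)}-\chi_{E(\Gamma_P)}$, obtains $d^{\ini}_{\scrO_i}-d^{\out}_{\scrO_i}=d_{\Gamma_P}-2d^{\out}_{\scrO_i}$, and then halves, whereas your invariant $A_{\widetilde{P}}\chi_{E(\scrO,\scrO_P)}+d^{\out}_{\scrO}=d^{\ini}_{\scrO_P}$ reaches the same identity directly without the factor of $2$.
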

\begin{proof}
    For each $i=1,2$, let $\psi_i:=2\chi_{E(\scrO_i,\scrO_P)}-\chi_{E(\Gamma_P)}$.
    Then we can see that 
    \[
    A_{\widetilde{P}}\psi_i=d^{\ini}_{\scrO_i}-d^{\out}_{\scrO_i}=d_{\Gamma_P}-2d^{\out}_{\scrO_i}.\]
    Thus, we have 
    \[
    2A_{\widetilde{P}}(\chi_{E(\scrO_1,\scrO_P)}-\chi_{E(\scrO_2,\scrO_P)})=A_{\widetilde{P}}(\psi_1-\psi_2)=2(d^{\out}_{\scrO_2}-d^{\out}_{\scrO_1}),
    \]
    and hence get the desired result.
\end{proof}

\begin{proof}[{proof of Theorem~\ref{thm:characonicA}}]
    We already know that there is a bijection between (i) and (ii) (Proposition~\ref{prop:coniccorres}).
    We fix a vertex $v$ of $\Gamma_P$ and show that the map $f: \scrA(\Gamma_P,v) \to \calW(\calB)\cap \ZZ^t$ ($\scrO\mapsto \phi(\chi_{E(\scrO,\scrO_P)})$) is bijective.
    Note that this map is well-defined by Lemma~\ref{lem:acyclic}.

    Moreover, from Corollary~\ref{cor:numberconic} and Remark~\ref{rem:number}, we have
    \[
    \#(\calW(\calB)\cap \ZZ^t)=M_{\widetilde{P}}(1,0)=T_{\Gamma_P}(1,0)=\#\scrA(\Gamma_P,v)
    \]
    Thus, it is enough to show that $f$ is injective.
    Assume that there are two orientations $\scrO_1,\scrO_2\in \scrA(\Gamma_P,v)$ with $\phi(\chi_{E(\scrO_1,\scrO_P)})=\phi(\chi_{E(\scrO_2,\scrO_P)})$.
    Then it follows from (\ref{cl_seq_weight}) that $\chi_{E(\scrO_1,\scrO_P)}-\chi_{E(\scrO_2,\scrO_P)}\in \im(A_{\widetilde{P}}^T)$ there exists $\xb\in \ZZ^{V(\Gamma_P)}$ with $\chi_{E(\scrO_1,\scrO_P)}-\chi_{E(\scrO_2,\scrO_P)}=A_{\widetilde{P}}^T\xb$.
    Thus, by Lemma~\ref{lem:chip}, we have
    \begin{align*}
        d^{\out}_{\scrO_2}-d^{\out}_{\scrO_1}=A_{\widetilde{P}}(\chi_{E(\scrO_1,\scrO_P)}-\chi_{E(\scrO_2,\scrO_P)})=A_{\widetilde{P}}A^T_{\widetilde{P}}\xb.
    \end{align*}
    In this case, according to theory of chip-firing game (\cite[Lemma~14.13.2]{godsil2001algebraic}), we have $d^{\out}_{\scrO_2}=d^{\out}_{\scrO_1}$, and hence $\chi_{E(\scrO_1,\scrO_P)}-\chi_{E(\scrO_2,\scrO_P)}\in \ker(A_{\widetilde{P}})$.

    Therefore, we have $\chi_{E(\scrO_1,\scrO_P)}-\chi_{E(\scrO_2,\scrO_P)}\in \ker(A_{\widetilde{P}})\cap \im(A_{\widetilde{P}}^T)=\{\bf 0\}$
    Thus, $\chi_{E(\scrO_1,\scrO_P)}=\chi_{E(\scrO_2,\scrO_P)}$, which shows that $f$ is injective.
\end{proof}

\begin{ex}
Let $P\subset \Phi_{A_4}$ be the signed poset with
    \begin{multline*}
        \widetilde{P}:=\{e_1:=\chi_3-\chi_1,\; e_2:=\chi_4-\chi_1,\; e_3:=\chi_3-\chi_2, \\
        \; e_4:=\chi_4-\chi_2,\; e_5:=\chi_5-\chi_1,\; e_6:=\chi_5-\chi_2\},
    \end{multline*}
    which coincides with $P^b$ defined in Example~\ref{ex:hibi}.
    
Let $\calF:=\{e_2,e_3,e_4,e_6\}\subset \widetilde{P}$, then $\calH_P(\calF)$ is a tree.
Let $\epsilon_1:=e_1$ and $\epsilon_2:=e_5$.
Then we obtain the fundamental cycles
\[
\calC_1=(1,3,2,4,1) \quad \text{ and } \quad \calC_2=(1,5,2,4,1)
\]
and we can compute the weight vectors using Theorem~\ref{thm:weightR_P} as follows:
\begin{align*}
        \beta_{e_1}&=(1,0), &\beta_{e_2}&=(-1,-1), &\beta_{e_3}&=(-1,0), \\
        \beta_{e_4}&=(1,1), &\beta_{e_5}&=(0,1), &\beta_{e_6}&=(0,-1).
\end{align*}
From Theorem~\ref{thm:weightAconic}, we have
\[
\calW(\calB)\cap \ZZ^2=\left\{(z_1,z_2) \in \ZZ^2 \; : \; \begin{array}{ccc}
-2 < z_1 < 2, \\ 
-2 < z_2 < 2, \\
-2< z_1-z_2 <2
\end{array}
\right\}=\set{\begin{array}{ccc}
(0,1),\; (1,1), \\ 
(-1,0),\; (0,0),\; (1,0), \\
(-1,-1),\; (0,-1)
\end{array}}.
\]

The graph $\Gamma_P$ has 7 acyclic orientations with unique source vertex $1\in V(\Gamma_P)$ (see Figure~\ref{Figure:Norientation}), which one-to-one correspond to the lattice points in $\calW(\calB)$ as follows:
\begin{itemize}
    \item[(A)] $\beta_{e_1}+\beta_{e_2}+\beta_{e_5}+\beta_{e_3}+\beta_{e_4}=(0,1)$,
    \item[(B)] $\beta_{e_1}+\beta_{e_2}+\beta_{e_5}+\beta_{e_4}=(1,1)$,
    \item[(C)] $\beta_{e_1}+\beta_{e_2}+\beta_{e_5}+\beta_{e_3}=(-1,0)$,
    \item[(D)] $\beta_{e_1}+\beta_{e_2}+\beta_{e_5}=(0,0)$,
    \item[(E)] $\beta_{e_1}+\beta_{e_2}+\beta_{e_5}+\beta_{e_4}+\beta_{e_6}=(1,0)$,
    \item[(F)] $\beta_{e_1}+\beta_{e_2}+\beta_{e_5}+\beta_{e_3}+\beta_{e_6}=(-1,-1)$,
    \item[(G)] $\beta_{e_1}+\beta_{e_2}+\beta_{e_5}+\beta_{e_6}=(0,-1)$.
\end{itemize}

    \begin{figure}[ht]
    \begin{center}
    \begin{tikzpicture}
      \node at (4.6,-1.4) {(A)} ;
      \node at (4.6,0)
      {\scalebox{0.7}{
        \begin{tikzpicture}[line width=0.05cm]
         \coordinate (a1) at (0, 0); 
      \coordinate (a2) at (2, 0); 
      \coordinate (a3) at (-1, 2); 
      \coordinate (a4) at (1, 2);
      \coordinate (a5) at (3,2);

      \draw[
  postaction={
    decorate,
    decoration={
      markings,
      mark=at position 0.5 with {\arrow{>}}
    }
  }
] (a1) -- (a3);
\draw[
  postaction={
    decorate,
    decoration={
      markings,
      mark=at position 0.65 with {\arrow{>}}
    }
  }
] (a1) -- (a4);
\draw[
  postaction={
    decorate,
    decoration={
      markings,
      mark=at position 0.65 with {\arrow{>}}
    }
  }
] (a2) -- (a3);
\draw[
  postaction={
    decorate,
    decoration={
      markings,
      mark=at position 0.65 with {\arrow{>}}
    }
  }
] (a2) -- (a4);
\draw[
  postaction={
    decorate,
    decoration={
      markings,
      mark=at position 0.65 with {\arrow{>}}
    }
  }
] (a1) -- (a5);
\draw[
  postaction={
    decorate,
    decoration={
      markings,
      mark=at position 0.5 with {\arrow{<}}
    }
  }
] (a2) -- (a5);
      \draw [line width=0.05cm, fill=white] (a1) circle [radius=0.15] node [left=5pt] {\large $1$};
      \draw [line width=0.05cm, fill=white] (a2) circle [radius=0.15] node [right=5pt] {\large $2$};
      \draw [line width=0.05cm, fill=white] (a3) circle [radius=0.15] node [left=5pt] {\large $3$};
      \draw [line width=0.05cm, fill=white] (a4) circle [radius=0.15] node [right=5pt] {\large $4$};
      \draw [line width=0.05cm, fill=white] (a5) circle [radius=0.15] node [right=5pt] {\large $5$};
        \end{tikzpicture}
      }};
      \node at (9.2,-1.4) {(B)} ;
      \node at (9.2,0)
      {\scalebox{0.7}{
        \begin{tikzpicture}[line width=0.05cm]
         \coordinate (a1) at (0, 0); 
      \coordinate (a2) at (2, 0); 
      \coordinate (a3) at (-1, 2); 
      \coordinate (a4) at (1, 2);
      \coordinate (a5) at (3,2);

      \draw[
  postaction={
    decorate,
    decoration={
      markings,
      mark=at position 0.5 with {\arrow{>}}
    }
  }
] (a1) -- (a3);
\draw[
  postaction={
    decorate,
    decoration={
      markings,
      mark=at position 0.65 with {\arrow{>}}
    }
  }
] (a1) -- (a4);
\draw[
  postaction={
    decorate,
    decoration={
      markings,
      mark=at position 0.65 with {\arrow{<}}
    }
  }
] (a2) -- (a3);
\draw[
  postaction={
    decorate,
    decoration={
      markings,
      mark=at position 0.65 with {\arrow{>}}
    }
  }
] (a2) -- (a4);
\draw[
  postaction={
    decorate,
    decoration={
      markings,
      mark=at position 0.65 with {\arrow{>}}
    }
  }
] (a1) -- (a5);
\draw[
  postaction={
    decorate,
    decoration={
      markings,
      mark=at position 0.5 with {\arrow{<}}
    }
  }
] (a2) -- (a5);
      \draw [line width=0.05cm, fill=white] (a1) circle [radius=0.15] node [left=5pt] {\large $1$};
      \draw [line width=0.05cm, fill=white] (a2) circle [radius=0.15] node [right=5pt] {\large $2$};
      \draw [line width=0.05cm, fill=white] (a3) circle [radius=0.15] node [left=5pt] {\large $3$};
      \draw [line width=0.05cm, fill=white] (a4) circle [radius=0.15] node [right=5pt] {\large $4$};
      \draw [line width=0.05cm, fill=white] (a5) circle [radius=0.15] node [right=5pt] {\large $5$};
        \end{tikzpicture}
      }};

\node at (0,-4.4) {(C)} ;
        \node at (0,-3)    
      {\scalebox{0.7}{
        \begin{tikzpicture}[line width=0.05cm]
         \coordinate (a1) at (0, 0); 
      \coordinate (a2) at (2, 0); 
      \coordinate (a3) at (-1, 2); 
      \coordinate (a4) at (1, 2);
      \coordinate (a5) at (3,2);

      \draw[
  postaction={
    decorate,
    decoration={
      markings,
      mark=at position 0.5 with {\arrow{>}}
    }
  }
] (a1) -- (a3);
\draw[
  postaction={
    decorate,
    decoration={
      markings,
      mark=at position 0.65 with {\arrow{>}}
    }
  }
] (a1) -- (a4);
\draw[
  postaction={
    decorate,
    decoration={
      markings,
      mark=at position 0.65 with {\arrow{>}}
    }
  }
] (a2) -- (a3);
\draw[
  postaction={
    decorate,
    decoration={
      markings,
      mark=at position 0.65 with {\arrow{<}}
    }
  }
] (a2) -- (a4);
\draw[
  postaction={
    decorate,
    decoration={
      markings,
      mark=at position 0.65 with {\arrow{>}}
    }
  }
] (a1) -- (a5);
\draw[
  postaction={
    decorate,
    decoration={
      markings,
      mark=at position 0.5 with {\arrow{<}}
    }
  }
] (a2) -- (a5);
      \draw [line width=0.05cm, fill=white] (a1) circle [radius=0.15] node [left=5pt] {\large $1$};
      \draw [line width=0.05cm, fill=white] (a2) circle [radius=0.15] node [right=5pt] {\large $2$};
      \draw [line width=0.05cm, fill=white] (a3) circle [radius=0.15] node [left=5pt] {\large $3$};
      \draw [line width=0.05cm, fill=white] (a4) circle [radius=0.15] node [right=5pt] {\large $4$};
      \draw [line width=0.05cm, fill=white] (a5) circle [radius=0.15] node [right=5pt] {\large $5$};
        \end{tikzpicture}
      }};
      \node at (4.6,-4.4) {(D)} ;
      \node at (4.6,-3)
      {\scalebox{0.7}{
        \begin{tikzpicture}[line width=0.05cm]
         \coordinate (a1) at (0, 0); 
      \coordinate (a2) at (2, 0); 
      \coordinate (a3) at (-1, 2); 
      \coordinate (a4) at (1, 2);
      \coordinate (a5) at (3,2);

      \draw[
  postaction={
    decorate,
    decoration={
      markings,
      mark=at position 0.5 with {\arrow{>}}
    }
  }
] (a1) -- (a3);
\draw[
  postaction={
    decorate,
    decoration={
      markings,
      mark=at position 0.65 with {\arrow{>}}
    }
  }
] (a1) -- (a4);
\draw[
  postaction={
    decorate,
    decoration={
      markings,
      mark=at position 0.65 with {\arrow{<}}
    }
  }
] (a2) -- (a3);
\draw[
  postaction={
    decorate,
    decoration={
      markings,
      mark=at position 0.65 with {\arrow{<}}
    }
  }
] (a2) -- (a4);
\draw[
  postaction={
    decorate,
    decoration={
      markings,
      mark=at position 0.65 with {\arrow{>}}
    }
  }
] (a1) -- (a5);
\draw[
  postaction={
    decorate,
    decoration={
      markings,
      mark=at position 0.5 with {\arrow{<}}
    }
  }
] (a2) -- (a5);
      \draw [line width=0.05cm, fill=white] (a1) circle [radius=0.15] node [left=5pt] {\large $1$};
      \draw [line width=0.05cm, fill=white] (a2) circle [radius=0.15] node [right=5pt] {\large $2$};
      \draw [line width=0.05cm, fill=white] (a3) circle [radius=0.15] node [left=5pt] {\large $3$};
      \draw [line width=0.05cm, fill=white] (a4) circle [radius=0.15] node [right=5pt] {\large $4$};
      \draw [line width=0.05cm, fill=white] (a5) circle [radius=0.15] node [right=5pt] {\large $5$};
        \end{tikzpicture}
      }};
      \node at (9.2,-4.4) {(E)} ;
      \node at (9.2,-3)
      {\scalebox{0.7}{
        \begin{tikzpicture}[line width=0.05cm]
         \coordinate (a1) at (0, 0); 
      \coordinate (a2) at (2, 0); 
      \coordinate (a3) at (-1, 2); 
      \coordinate (a4) at (1, 2);
      \coordinate (a5) at (3,2);

      \draw[
  postaction={
    decorate,
    decoration={
      markings,
      mark=at position 0.5 with {\arrow{>}}
    }
  }
] (a1) -- (a3);
\draw[
  postaction={
    decorate,
    decoration={
      markings,
      mark=at position 0.65 with {\arrow{>}}
    }
  }
] (a1) -- (a4);
\draw[
  postaction={
    decorate,
    decoration={
      markings,
      mark=at position 0.65 with {\arrow{<}}
    }
  }
] (a2) -- (a3);
\draw[
  postaction={
    decorate,
    decoration={
      markings,
      mark=at position 0.65 with {\arrow{>}}
    }
  }
] (a2) -- (a4);
\draw[
  postaction={
    decorate,
    decoration={
      markings,
      mark=at position 0.65 with {\arrow{>}}
    }
  }
] (a1) -- (a5);
\draw[
  postaction={
    decorate,
    decoration={
      markings,
      mark=at position 0.5 with {\arrow{>}}
    }
  }
] (a2) -- (a5);
      \draw [line width=0.05cm, fill=white] (a1) circle [radius=0.15] node [left=5pt] {\large $1$};
      \draw [line width=0.05cm, fill=white] (a2) circle [radius=0.15] node [right=5pt] {\large $2$};
      \draw [line width=0.05cm, fill=white] (a3) circle [radius=0.15] node [left=5pt] {\large $3$};
      \draw [line width=0.05cm, fill=white] (a4) circle [radius=0.15] node [right=5pt] {\large $4$};
      \draw [line width=0.05cm, fill=white] (a5) circle [radius=0.15] node [right=5pt] {\large $5$};
        \end{tikzpicture}
      }};

\node at (0,-7.4) {(F)} ;
        \node at (0,-6)    
      {\scalebox{0.7}{
        \begin{tikzpicture}[line width=0.05cm]
         \coordinate (a1) at (0, 0); 
      \coordinate (a2) at (2, 0); 
      \coordinate (a3) at (-1, 2); 
      \coordinate (a4) at (1, 2);
      \coordinate (a5) at (3,2);

      \draw[
  postaction={
    decorate,
    decoration={
      markings,
      mark=at position 0.5 with {\arrow{>}}
    }
  }
] (a1) -- (a3);
\draw[
  postaction={
    decorate,
    decoration={
      markings,
      mark=at position 0.65 with {\arrow{>}}
    }
  }
] (a1) -- (a4);
\draw[
  postaction={
    decorate,
    decoration={
      markings,
      mark=at position 0.65 with {\arrow{>}}
    }
  }
] (a2) -- (a3);
\draw[
  postaction={
    decorate,
    decoration={
      markings,
      mark=at position 0.65 with {\arrow{<}}
    }
  }
] (a2) -- (a4);
\draw[
  postaction={
    decorate,
    decoration={
      markings,
      mark=at position 0.65 with {\arrow{>}}
    }
  }
] (a1) -- (a5);
\draw[
  postaction={
    decorate,
    decoration={
      markings,
      mark=at position 0.5 with {\arrow{>}}
    }
  }
] (a2) -- (a5);
      \draw [line width=0.05cm, fill=white] (a1) circle [radius=0.15] node [left=5pt] {\large $1$};
      \draw [line width=0.05cm, fill=white] (a2) circle [radius=0.15] node [right=5pt] {\large $2$};
      \draw [line width=0.05cm, fill=white] (a3) circle [radius=0.15] node [left=5pt] {\large $3$};
      \draw [line width=0.05cm, fill=white] (a4) circle [radius=0.15] node [right=5pt] {\large $4$};
      \draw [line width=0.05cm, fill=white] (a5) circle [radius=0.15] node [right=5pt] {\large $5$};
        \end{tikzpicture}
      }};
      \node at (4.6,-7.6) {(G)} ;
      \node at (4.6,-6)
      {\scalebox{0.7}{
        \begin{tikzpicture}[line width=0.05cm]
         \coordinate (a1) at (0, 0); 
      \coordinate (a2) at (2, 0); 
      \coordinate (a3) at (-1, 2); 
      \coordinate (a4) at (1, 2);
      \coordinate (a5) at (3,2);

      \draw[
  postaction={
    decorate,
    decoration={
      markings,
      mark=at position 0.5 with {\arrow{>}}
    }
  }
] (a1) -- (a3);
\draw[
  postaction={
    decorate,
    decoration={
      markings,
      mark=at position 0.65 with {\arrow{>}}
    }
  }
] (a1) -- (a4);
\draw[
  postaction={
    decorate,
    decoration={
      markings,
      mark=at position 0.65 with {\arrow{<}}
    }
  }
] (a2) -- (a3);
\draw[
  postaction={
    decorate,
    decoration={
      markings,
      mark=at position 0.65 with {\arrow{<}}
    }
  }
] (a2) -- (a4);
\draw[
  postaction={
    decorate,
    decoration={
      markings,
      mark=at position 0.65 with {\arrow{<}}
    }
  }
] (a1) -- (a5);
\draw[
  postaction={
    decorate,
    decoration={
      markings,
      mark=at position 0.5 with {\arrow{>}}
    }
  }
] (a2) -- (a5);
      \draw [line width=0.05cm, fill=white] (a1) circle [radius=0.15] node [left=5pt] {\large $1$};
      \draw [line width=0.05cm, fill=white] (a2) circle [radius=0.15] node [right=5pt] {\large $2$};
      \draw [line width=0.05cm, fill=white] (a3) circle [radius=0.15] node [left=5pt] {\large $3$};
      \draw [line width=0.05cm, fill=white] (a4) circle [radius=0.15] node [right=5pt] {\large $4$};
      \draw [line width=0.05cm, fill=white] (a5) circle [radius=0.15] node [right=5pt] {\large $5$};
        \end{tikzpicture}
      }};      
      \end{tikzpicture}
      \caption{acyclic orientation}
\label{Figure:Norientation}
\end{center}
  \end{figure}
\end{ex}

\subsection*{Acknowledgment}
The author would like to thank Sophie Rehberg for kindly sharing a preprint of her paper \cite{bach2024acyclotopes}.
The author was partially supported by Grant-in-Aid for JSPS Fellows Grant JP25KJ0047.

\bibliographystyle{plain}
\bibliography{References}

\end{document}